\newtheorem{thm}{Theorem}[]
\newtheorem{lem}[thm]{Lemma}
\newtheorem{prop}[thm]{Proposition}
\newtheorem{cor}[thm]{Corollary}
\newtheorem*{remark}{Remark}
\newtheorem*{exa}{Example}
\newenvironment{definition}[1][Definition.]{\begin{trivlist}
\item[\hskip \labelsep {\bfseries #1}]}{\end{trivlist}}
\DeclareMathOperator{\id}{id}
\newcommand{\clr}{\operatorname{clr}} 
\newcommand{\inv}{\operatorname{inv}} 
\newcommand{\sfm}{\operatorname{sfm}} 
\newcommand{\ilr}{\operatorname{ilr}} 
\newcommand{\diag}{\operatorname{diag}}
\newcommand\invisiblesection[1]{%
  \refstepcounter{section}%
  \addcontentsline{toc}{section}{\protect\numberline{\thesection}#1}%
  \sectionmark{#1}}
     \DeclareMathOperator*{\argmin}{argmin}
      \DeclareMathOperator*{\argmax}{argmax}
\begin{document}

\title{Darwinian evolution as Brownian motion on the simplex:\\  {\Large A geometric perspective on stochastic replicator dynamics}}

\author{Tobias Lehmann
\\
University of Leipzig} 
       
\maketitle  
    
\begin{abstract}
We prove that stochastic replicator dynamics can be interpreted as intrinsic Brownian motion on the simplex equipped the Aitchison geometry. As an immediate consequence we derive three approximation results in the spirit of Wong-Zakai approximation, Donsker's invariance principle and a JKO-scheme. Finally, using the Fokker-Planck equation and Wasserstein-contraction estimates, we study the long time behavior of the stochastic replicator equation, as an example of a non-gradient drift diffusion on the Aitchison simplex. 

\end{abstract}
\bigskip\par
\noindent  \textit{MSC subject classification}: 51E26, 62-07, 60J60, 60J70, 91A22, 92D25\par
\noindent\textit{Keywords}: Brownian motion, Dirichlet distribution,  Fokker-Planck equation, invariant measure, Wasserstein contraction, Aitchison geometry, replicator dynamics, random fitness landscape, evolutionary stable strategy
\bigskip
       \begin{center}
C{\footnotesize ONTENTS}
\end{center}  
\vspace{-1.2cm}
\tableofcontents         
          
\newpage         
\invisiblesection{Introduction and outline}           
\begin{center}
1. I{\footnotesize NTRODUCTION AND OUTLINE}
\end{center}
The Aitchsion geometry is a Hilbert space structure on the open standard unit simplex and used prominently in compositional data analysis. The aim of this note is to present a seemingly new and interesting connection between Darwinian evolution modeled through stochastic replicator equations on one hand and Brownian motion on the Aitchison simplex on the other hand.\par

Let us elaborate this objective. The classical way, first introduced in \citep{taylor1978evolutionary}, to reformulate Darwin's paradigm of selection in mathematical language is by means of replicator dynamics. Consider a population with $n$ distinct types (e.g. genotypes) and denote by $p_{i}(t)$ the share of individuals with type $i$ at time $t$. Also, given a fitness landscape $f=(f^{1},\dots,f^{n})\colon\mathbb{R}^{n}\to\mathbb{R}^{n}$ we write
\begin{equation*}
\bar{f}(p)=p\cdot f(p)=\sum_{i}^{n}p_{i}f^{i}(p),\quad i=1,\dots,n
\end{equation*}
for the mean fitness. Now fix some initial datum $p\in\Delta$, where  
\begin{equation*}
\Delta=\Delta_{n}:=\lbrace p\in (0,1)^{n}\ |\ p_{1}+\dots+p_{n}=1 \rbrace
\end{equation*}
is the open standard unit simplex. Then the replicator equation reads 
\begin{equation}\label{eq: rep}
\dot{p}_{i}(t)=p_{i}(t)\left(f^{i}(p(t))-\bar{f}(p(t))\right),\quad p(0)=p
\end{equation} 
and models an evolution of type compositions $p(t)=(p_{1}(t),\dots,p_{n}(t))\in\Delta$ undergoing selection through the fitness landscape $f$.\par  
Especially well studied is the situation of linear fitness landscapes, i.e. when
${f(p)=Ap}$ for some payoff matrix $A\in\mathbb{R}^{n\times n}$, where profound connections to evolutionary game theory and Smith's concept of evolutionary stable strategies arise \citep[c.f.][]{hofbauer1998evolutionary}. But also the simple scenario in which the fitness landscape is frequency independent and thus given by a vector ${f=(f^{1},\dots,f^{n})\in\mathbb{R}^{n}}$ is of interest. In this situation (\ref{eq: rep}) describes the prebiotic evolution of self-replicating polynucleotides (e.g RNA, DNA) without mutations \citep[c.f.][]{sigmund1986survey, nowak2006evolutionary} and is a particular example of a class of dynamical systems introduced by Eigen and Schuster in their theories of quasispecies and hypercycles \citep{eigen1971selforganization, eigen2012hypercycle}.\par 
Often, allowing in mathematical models for uncertainty or randomness leads to a description better fitting empirical evidence. Applying this principle to a replicator equation with linear fitness landscape given by the matrix $A$ leads to the stochastic replicator equation
\begin{equation}\label{eq:fudsde}
dX_{t}=b(X_{t})dt+\sigma(X_{t})dB_{t},
\end{equation}
where $B$ is an $n$-dimensional Brownian motion, the drift component is given by 
\begin{equation*}
b(x)=\left(\diag(x_{1},\dots,x_{n})-x\otimes x\right)\left(A-\diag\left(\sigma_{1}^{2},\dots,\sigma_{n}^{2}\right)\right)x
\end{equation*}
and the diffusion matrix obeys
\begin{equation*}
\sigma(x)=\left(\diag(x_{1},\dots,x_{n})-x\otimes x\right)\diag\left(\sigma_{1},\dots,\sigma_{n}\right).
\end{equation*}
Initially proposed in \citep{fudenberg1992evolutionary}, this model and in particular its long-time behavior attracted a lot of interest over the last decades. We refer exemplarly to \citep{hofbauer2009time} and references therein.\par 
 Although we will not take this perspective here, we want emphasize that apart from the biological application, there is also an interpretation for (\ref{eq:fudsde}) in terms of mathematical finance. Indeed, using the language of Fernholz' stochastic portfolio theory \citep[c.f.][]{fernholz2002stochastic}, $X$ can be seen as the evolution of a market portfolio, for which the rates of return $r_{i}$ of the underlying stock prices experience feedback through $X$ via $r_{i}(t)=(AX_{t})_{i}$. 
 
\par\medskip 
In the second section of this note we recall basic principles of the Aitchison geometry. In short, we will see that, when equipped with appropriate vector space operations, the simplex can be given a Hilbert space structure by dint of the inner product
\begin{equation*}
\langle p,q\rangle_{A}:=\frac{1}{2n}\sum_{i,j=1}^{n}\ln\frac{p_{i}}{p_{j}}\ln\frac{q_{i}}{q_{j}}.
\end{equation*}  

Now consider (\ref{eq:fudsde}) with $A=0$ and $\sigma_{i}=1$ for $i=1,\dots,n$. The corresponding diffusion process, that we refer to as the \textit{Aitchison diffusion}, evolves according to the Stratonovich SDE
 \begin{equation}\label{eq:aidif}
\begin{aligned}
dX^{i}_{t}&=X^{i}_{t}\left(\circ dB^{i}_{t}-\sum_{j=1}^{n}X^{j}_{t}\circ dB^{j}_{t}\right),\quad i=1,\dots n\\
X_{0}&=p\in\Delta,
\end{aligned}
\end{equation}
which can be interpreted as a replicator equation in a white noise fitness landscape. Then the main result of this note, which we present in Section 3, asserts that  the Aitchison diffusion is nothing but Brownian motion on $(\Delta,\langle\cdot,\cdot\rangle_{A})$.\par 
This observation directely entails three approximation results to be presented in \linebreak Section 4. Apart from a Wong-Zakai approximation and a JKO-scheme for the associated heat equation on the simplex, we show that discrete stochastic replicator dynamics are random walks on the Aitchison simplex which, in the spirit of Donsker's invariance principle, can be used to approximate (\ref{eq:aidif}).\par 

In the last section we study the stochastic replicator dynamic as an example of a drift diffusion processes on $\Delta$. Using the associated Fokker-Planck equation, we investigate the long-time behavior and recover results proven earlier in \citep{hofbauer2009time}. The final subsection is devoted to questions of the relaxation to equilibrium for replicator dynamics which we analyse by means of Wasserstein contraction estimates. As a major result we characterize those payoff matrices that steer trajectories of the stochastic replicator equation to synchronize with a Langevin dynamic on $\Delta$ driven by cross-entropy.

\invisiblesection{A primer on the Aitchison geometry of the simplex}   
  
 \medskip  
 \begin{center}
2. A {\footnotesize  PRIMER ON THE AITCHISON GEOMETRY OF THE SIMPLEX}
\end{center} 
  Often, most notably in geology and chemistry but also in ecology or social sciences, one is confronted with data which represents portions of a total. We may think of the chemical composition of 100 soil samples taken at different places in Germany. Many classical statistical methods are relying on Euclidean geometry and are therefore inappropriate for analysing data constrained to a constant total sum. Traditionally, such data is called \textit{compositional data} (CoDa) and the corresponding branch of statistics \textit{compositional data analysis}, for which a plenty of literature is available \citep[c.f.][]{10.2307/2345821, pawlowsky2007lecture,  van2013analyzing, codaweb}. In the following we will mainly rely on \citep{pawlowsky2007lecture}. \par 
One of the most influential developments in the history of compositional data analysis was Aitchison's idea \citep{Aitchison1984} of equipping the simplex with a Hilbert space structure, which in his honour is nowadays called \textit{Aitchison geometry} and defined as follows:\par\medskip

\noindent For any $p,q\in\mathbb{R}_{> 0}$ and $\alpha\in\mathbb{R}$ we define the operations:\par\medskip
the \textit{closure} of $p$ 
   \begin{equation*}
   \mathcal{C}(p):=\left(\sum_{j=1}^{n}p_{j}\right)^{-1}p,
   \end{equation*}
\indent the \textit{perturbation} of $p$ by $q$
\begin{equation*}
p\oplus q:=\mathcal{C}(p_{1}q_{1},\dots,p_{n}q_{n}),
\end{equation*}
\indent the \textit{powering} of $p$ by $\alpha$
\begin{equation*}
\alpha\odot p:=\mathcal{C}(p_{1}^{\alpha},\dots,p_{n}^{\alpha}).
\end{equation*}
Then, one can easily check that $(\Delta,\oplus,\odot)$ is a vector space, in which the neutral element $e$ is the barycenter of $\Delta$, i.e.
\begin{equation*}
e=\mathcal{C}(1,\dots,1) 
\end{equation*}
and where the inverse of $p\in\Delta$
is given by
\begin{equation*} 
\inv(p):=p^{-1}:=-1\odot p=\mathcal{C}(p_{1}^{-1},\dots,p_{n}^{-1}).
\end{equation*}
Moreover, if we introduce the the Aitchison inner product 
\begin{equation}\label{eqn: Aitprod}
\langle p,q\rangle_{A}:=\frac{1}{2n}\sum_{i,j=1}^{n}\ln\frac{p_{i}}{p_{j}}\ln\frac{q_{i}}{q_{j}}
\end{equation}
then one can show
 
\begin{thm}[Hilbert space structure of $\Delta$, {\citep[c.f.][]{10.2307/3085883}}]\label{thm: Aihi}
The vector space $(\Delta,\oplus,\odot)$ equipped with the inner product (\ref{eqn: Aitprod}) is a Hilbert space.
\end{thm}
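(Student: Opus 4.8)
The plan is to exhibit an explicit \emph{isometric linear isomorphism} between $\bigl(\Delta,\oplus,\odot,\langle\cdot,\cdot\rangle_{A}\bigr)$ and a Euclidean space, and then invoke the fact that every finite-dimensional inner product space is complete, hence a Hilbert space. The map I would use is the centered log-ratio transform
\begin{equation*}
\clr(p):=\Bigl(\ln\tfrac{p_{1}}{g(p)},\dots,\ln\tfrac{p_{n}}{g(p)}\Bigr),\qquad g(p):=\Bigl(\textstyle\prod_{j=1}^{n}p_{j}\Bigr)^{1/n},
\end{equation*}
which I claim is a bijection from $\Delta$ onto the hyperplane $H:=\{x\in\mathbb{R}^{n}\ |\ x_{1}+\dots+x_{n}=0\}$, with inverse $x\mapsto\mathcal{C}(e^{x_{1}},\dots,e^{x_{n}})$.

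First I would dispatch the vector space axioms: passing to logarithmic coordinates turns $\oplus$ into coordinatewise addition and $\odot$ into scalar multiplication modulo the closure $\mathcal{C}$, so the axioms for $(\Delta,\oplus,\odot)$ reduce to those of $\mathbb{R}^{n}$; this also identifies the neutral element as $e=\mathcal{C}(1,\dots,1)$ and the inverse of $p$ as $\mathcal{C}(p_{1}^{-1},\dots,p_{n}^{-1})$, as asserted in the excerpt. Then I would check that $\clr$ is linear for these operations, i.e. $\clr(p\oplus q)=\clr(p)+\clr(q)$ and $\clr(\alpha\odot p)=\alpha\,\clr(p)$ (the normalizing constant in $\mathcal{C}$ being exactly absorbed by subtracting the log-geometric-mean), and that its image lands in $H$ because $\sum_{i}\ln(p_{i}/g(p))=\ln\bigl(\prod_{i}p_{i}/g(p)^{n}\bigr)=0$; injectivity and the displayed inverse formula are then immediate.

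The crux is the identity
\begin{equation*}
\langle p,q\rangle_{A}=\sum_{i=1}^{n}\ln\tfrac{p_{i}}{g(p)}\,\ln\tfrac{q_{i}}{g(q)}=\bigl\langle\clr(p),\clr(q)\bigr\rangle_{\mathbb{R}^{n}},
\end{equation*}
which I would verify by writing $a_{i}=\ln p_{i}$, $b_{i}=\ln q_{i}$, $\bar a=\frac1n\sum a_{i}$, $\bar b=\frac1n\sum b_{i}$, expanding $(a_{i}-a_{j})(b_{i}-b_{j})$ and summing over $i,j$: both sides collapse to $\sum_{i}a_{i}b_{i}-n\bar a\bar b=\sum_{i}(a_{i}-\bar a)(b_{i}-\bar b)$. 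Granting this, $\langle\cdot,\cdot\rangle_{A}$ is symmetric and bilinear (clear from the formula) and positive definite, since $\langle p,p\rangle_{A}=\|\clr(p)\|^{2}_{\mathbb{R}^{n}}=0$ forces $\clr(p)=0$, hence $p=e$; so it is a genuine inner product and $\clr$ is an isometry onto $\bigl(H,\langle\cdot,\cdot\rangle_{\mathbb{R}^{n}}\bigr)$.

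Finally, $H$ is an $(n-1)$-dimensional linear subspace of $\mathbb{R}^{n}$, hence a complete inner product space, i.e. a Hilbert space; transporting the metric and Cauchy structure back along $\clr^{-1}$ shows $\bigl(\Delta,\oplus,\odot,\langle\cdot,\cdot\rangle_{A}\bigr)$ is a Hilbert space as well. (Equivalently, composing $\clr$ with an orthonormal basis of $H$ yields the isometric log-ratio transform $\ilr\colon\Delta\to\mathbb{R}^{n-1}$, realizing the isomorphism with standard $\mathbb{R}^{n-1}$.) I do not expect a genuine obstacle here: completeness is automatic from finite dimension and the vector space axioms are routine; the only real computation is the bookkeeping behind the displayed inner-product identity, and the one point to watch is confirming that the constant $\frac{1}{2n}$ in (\ref{eqn: Aitprod}) is precisely what makes the double sum agree with the $\clr$-pairing.
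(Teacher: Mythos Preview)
Your proposal is correct and follows exactly the route the paper indicates: the paper does not give a detailed proof but simply remarks that Theorem~\ref{thm: Aihi} ``is easily justif[ied] by providing a Hilbert space isomorphism,'' namely the centered log-ratio transform $\clr\colon\Delta\to H$, and then records the two identities $\clr(\alpha\odot p\oplus q)=\alpha\clr(p)+\clr(q)$ and $\langle p,q\rangle_{A}=(\clr(p),\clr(q))_{H}$ that you propose to verify. Your write-up is in fact more explicit than the paper's (you actually check the $\tfrac{1}{2n}$ normalization and spell out why finite-dimensionality gives completeness), but the underlying argument is identical.
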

 
Theorem \ref{thm: Aihi} is easily justify by providing a Hilbert space isomorphism. The most prominent example, which is of major importance in both, compositional data analysis as well as throughout this note, is the so called \textit{centered log-ratio transform} which maps $\Delta$ to the Hilbert space $(H,(\cdot,\cdot)_{H})$, where
\begin{equation}\label{eq:H}
H=H_{n}:=\left\lbrace x\in\mathbb{R}^{n}\ \ | \ x_{1}+\dots+x_{n}=0\ \right\rbrace
\end{equation}
and $(\cdot,\cdot)_{H}$ is the scalar product inherited from the standard inner product on $\mathbb{R}^{n}$. This transform is defined as

\begin{equation*}
\clr\colon\Delta\to H,\quad p\mapsto\clr(p):=\left(\ln\frac{p_{i}}{g(p)}\right)_{i=1}^{n},
\end{equation*}
where $g(p)=\left(\prod_{i=1}^{n}p_{i}\right)^{\frac{1}{n}}$ is the geometric mean of $p$. As a consequence  we obtain the usual transformation rules
\begin{equation}\label{eqn: traforule}
(i)\quad \clr(\alpha\odot p\oplus q)=\alpha\clr(p)+\clr(q)\quad\quad\text{ and } \quad\quad(ii)\quad  \langle p,q\rangle_{A}=(\clr(p),\clr(q))_{H}.
\end{equation}
For $p,q\in\Delta$ we will abbreviate $p\oplus(-1)\odot q=:p\ominus q$. Then, the Aitchison distance on $\Delta$ induced through $\langle\cdot,\cdot\rangle_{A}$ obeys
\begin{equation}\label{eqn: Aidist}
d^{2}_{A}(p,q):=\langle p\ominus q,p\ominus q\rangle=\frac{1}{2n}\sum_{i,j=1}^{n}\left(\ln\frac{p_{i}}{p_{j}}-\ln\frac{q_{i}}{q_{j}}\right)^{2}.
\end{equation} 

Because it will be of frequent use later on, we will denote the inverse log-ration transform
\begin{equation*}
\clr^{-1}\colon H\to\Delta,\qquad x\mapsto\clr^{-1}(x)=\mathcal{C}(e^{x_{1}},\dots,e^{x_{n}}),
\end{equation*} 
by $\sfm$. The reason for doing so, is that
\begin{equation*}
\sfm_{i}(x):=\clr^{-1}_{i}(x)=\frac{e^{x_{i}}}{\sum_{j}e^{x_{j}}},\quad i=1,\dots,n
\end{equation*}
is an ubiquitous object in applied mathematics. Whereas it (or close variants of it) occurs as Boltzmann or Gibbs distribution in statistical mechanics, it is a well known map also in evolutionary game theory and decision theory. Over the last decades it has been used most prominently in machine learning, where it is referred to as softmax function \citep[c.f.][and references therein]{gao2017properties}, which is the motivation for our naming. 

\begin{remark}
We defined $\clr\colon\Delta\to H$ and $\sfm\colon H\to\Delta$ for the sake of bijectivity, but of course a priori $\clr$ and $\sfm$ are well defined also for arguments in $\mathbb{R}^{n}_{>0}$ and $\mathbb{R}^{n}$, respectively. 
\end{remark}

Although the $\clr$ transform is easy to compute, it has the drawback of mapping to $H$, whereas often one would rather prefer an isomorphism realising $\Delta\cong\mathbb{R}^{n-1}$. This can be easily achieved by appropriately post- or pre processing $\clr$ and $\sfm$, respectively. To this end, let $\lbrace e_{1},\dots,e_{n-1}\rbrace$ be an orthonormal base of $\Delta$ and define the \emph{contrast matrix} $\Psi\in\mathbb{R}^{(n-1)\times n}$ by
\begin{equation*}
\Psi:=
 \begin{pmatrix}
  \Psi_{1} \\
  \vdots \\
  \Psi_{n-1}
 \end{pmatrix}
 :=
 \begin{pmatrix}
  \clr{e^{1}} \\
  \vdots \\
  \clr{e^{n-1}}
 \end{pmatrix}.
\end{equation*}
Observe that, independent of the choice of the basis, contrast matrices satisfy
\begin{equation}\label{eq:contrastprop}
\Psi\Psi^{\top}=\operatorname{Id}_{n-1}\qquad \qquad \text{ and } \qquad \qquad \Psi^{\top}\Psi=\operatorname{Id}_{n}-\frac{1}{n}\mathbf{1}_{n}\otimes\mathbf{1}_{n},
\end{equation}
where $\mathbf{1}_{n}=(1,\dots,1)\in\mathbb{R}^{n}$.
Then the \textit{isometric log-ratio transform} $\ilr$ is defined by
\begin{equation*}
\ilr(p):=\Psi\clr(p)
\end{equation*}
with inverse
\begin{equation*}
\ilr^{-1}(x)=\sfm(\Psi^{\top}x). 
\end{equation*} 
Equivalently, we can express $\ilr$ by
\begin{equation*}
\ilr(p)=(\langle p,e^{1}\rangle_{A},\dots,\langle p,e^{n-1}\rangle_{A})^{\top},
\end{equation*} 
from which we can immediately deduce the desirable property
\begin{equation*}
\ilr(e_{i})=\epsilon_{i},
\end{equation*}
where $\lbrace\epsilon_{i}\rbrace_{i=1}^{n-1}$ is the standard basis in $\mathbb{R}^{n-1}$. Henceforth, we will use the symbols $e_{i}$ and $\epsilon_{i}$ generically for basis elements in $\Delta$ and $\mathbb{R}^{n}$, respectively.\par

We continue with a short discussion on the relations between the Euclidean distance $d_{\mathbb{R}^{n}}$ and the Aitchison distance $d_{A}$ on $\Delta$ and the topologies each of the two the distances induces on the simplex.

\begin{lem}\label{lem:topequiv}
The metrics $d_{\mathbb{R}^{n}}$ and $d_{A}$ induce the same topology on $\Delta$. 
\end{lem}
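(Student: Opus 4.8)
The plan is to reduce everything to the centered log-ratio transform. By the transformation rule (\ref{eqn: traforule})(ii) together with the formula (\ref{eqn: Aidist}), the Aitchison metric is precisely the pullback of the Euclidean metric on $H$ along $\clr$, that is $d_A(p,q) = \|\clr(p)-\clr(q)\|_{\mathbb{R}^n}$ for all $p,q\in\Delta$. Consequently the $d_A$-topology on $\Delta$ is exactly the topology that turns $\clr$ into an isometry (hence a homeomorphism) onto $(H,\|\cdot\|)$: a set $U\subseteq\Delta$ is $d_A$-open iff $\clr(U)$ is open in $H$. Thus the lemma is equivalent to the statement that $\clr$ is a homeomorphism when $\Delta$ is given the Euclidean subspace topology, and, since both spaces are metric, it suffices to check on sequences: for $p\in\Delta$ and $(p^{(k)})_k\subseteq\Delta$ one should show $d_{\mathbb{R}^n}(p^{(k)},p)\to 0$ if and only if $d_A(p^{(k)},p)\to 0$.

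For the forward direction I would argue as follows. If $p^{(k)}\to p$ coordinatewise with $p\in\Delta$, then, because $p_i>0$ for every $i$, there is $\delta>0$ with $p_i\ge 2\delta$ for all $i$, so $p^{(k)}_i\ge\delta$ for all $i$ once $k$ is large. On the set $\{q\in\Delta : q_i\ge\delta\ \forall i\}$ the map $\clr$ is a fixed linear combination of the logarithms of the coordinates and hence uniformly continuous; therefore $\clr(p^{(k)})\to\clr(p)$ in $\mathbb{R}^n$, i.e. $d_A(p^{(k)},p)=\|\clr(p^{(k)})-\clr(p)\|\to 0$. For the converse, assume $d_A(p^{(k)},p)\to 0$, i.e. $\clr(p^{(k)})\to\clr(p)$ in $H$; applying the continuous map $\sfm=\clr^{-1}\colon H\to\Delta$ (continuity being immediate from $\sfm_i(x)=e^{x_i}/\sum_j e^{x_j}$) and using $\sfm\circ\clr=\id_\Delta$ yields $p^{(k)}=\sfm(\clr(p^{(k)}))\to\sfm(\clr(p))=p$, i.e. $d_{\mathbb{R}^n}(p^{(k)},p)\to 0$. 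Hence the two metrics have the same convergent sequences and therefore the same topology.

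The computation is elementary, so I do not expect a genuine obstacle; the one point that genuinely needs care — and the reason the statement fails on the \emph{closed} simplex — is the forward implication, where one must exploit that the limit $p$ lies in the open simplex to keep all coordinates, and thus the logarithms entering $\clr$, bounded away from the singularity at the boundary. I would therefore be careful to phrase that step via the explicit uniform bound $p^{(k)}_i\ge\delta$ rather than glibly invoking ``continuity of $\clr$'', since $\clr$ has no continuous extension to $\overline{\Delta}$.
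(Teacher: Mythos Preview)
Your argument is correct and follows essentially the same route as the paper: continuity of $\clr$ gives the $(d_{\mathbb{R}^n},d_A)$ direction, and continuity of $\sfm$ gives the converse. The only difference is emphasis: you spell out the first direction carefully (bounding coordinates away from zero), whereas the paper dispatches it in one line; conversely, for the second direction the paper invokes the $1$-Lipschitz property of $\sfm$ to obtain the quantitative bound $\|p-q\|_{\mathbb{R}^n}\le d_A(p,q)$, which it reuses later, while you appeal only to continuity.
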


\begin{proof}
We need to prove that $\operatorname{id}\colon\Delta\to\Delta$ is both, $(d_{\mathbb{R}^{n}},d_{A})$-continuous and $(d_{A},d_{\mathbb{R}^{n}})$-continuous. The first statement follows immediately from the continuity of $\clr$ or $\ilr$, respectively. As for the latter case, observe that it is well known \citep[prop. 4]{gao2017properties} that $\sfm\colon\mathbb{R}^{n}\to\mathbb{R}^{n}$ is $1$-Lipschitz continuous. Hence, for all $p,q\in\Delta$ we have
\begin{equation}\label{eq:normequi}
\|p-q\|_{\mathbb{R}^{n}}\le\|\clr(p)-\clr(q)\|_{A} =d_{A}(p,q),
\end{equation}
which yields the second claim.
\end{proof}
Notice that, whereas $d_{A}$ and $d_{\mathbb{R}^{n}}$ are topologically equivalent, they are not strongly equivalent. Indeed, since $d_{\mathbb{R}^{n}}$ is uniformly bounded on $\Delta$ by $\sqrt{2}$ we cannot find some $c>0$ with
\begin{equation*}
d_{A}(p,q)\le cd_{\mathbb{R}^{n}}(p,q),\quad p,q\in\Delta.
\end{equation*}

Finally, let us make a few words on integration and differentiation on $\Delta$. Since the Aitchison simplex is in particular an Abelian group it comes along with a natural reference measure $\lambda_{A}$, which is the Haar measure on $(\Delta,\oplus)$. In the CoDa community it is referred to as the \textit{Aitchison measure} \cite[c.f.][]{pawlowsky2003statistical}, which can be characterized as push-forward of the Lebesgue measure $\lambda_{n-1}$ on $\mathbb{R}^{n-1}$ under $\ilr^{-1}$ or up to multiplicative constants equivalently, as the push forward of $\lambda_{n}$ under $\sfm$. The joint distribution of the first $(n-1)$ marginals of $\lambda_{A}$ (which by slight abuse of notation we call also $\lambda_{A}$) is absolutely continuous with respect to $\lambda_{n-1}$ with Radon-Nikodym derivative
 
\begin{equation*}    
\frac{d\lambda_{A}}{d\lambda_{n-1}}(p_{1},\dots,p_{n-1})=\left(\prod_{i=1}^{n}p_{i}\right)^{-1},
\end{equation*}
where $p_{n}:=1-\sum_{i=1}^{n-1}p_{i}$.\par

As a Hilbert space, the Aitchison simplex also exhibits a natural differential calculus, which, as we will see, is closely related to the classical Fisher information geometry of the simplex (see also \cite{erb2020informationgeometric}). Let us briefly recall the latter.\par 

Set $|x|=\sum x_{i}$ and consider the positive orthant ${\mathbb{R}^{n}_{>0}=\lbrace x\in\mathbb{R}^{n}\ |\ x_{i}> 0\rbrace}$ equipped with the Riemannian metric
\begin{equation*}
 g_{x}(u,v)=\sum_{i=1}^{n}|x|\frac{u_{i}v_{i}}{x_{i}}.
\end{equation*} 
As a submanifold of $(\mathbb{R}^{n}_{>0},g)$, the simplex $\Delta$ inherits a Riemannian structure with inverse metric tensor
\begin{equation}\label{eq:invtens}
(g^{-1}(p))_{ij}:=g^{ij}(p):=p_{i}(\delta_{ij}-p_{j}),\quad p\in\Delta
\end{equation}
and for sufficiently smooth functions $\phi\colon\Delta\to\mathbb{R}$ the gradient at $p\in\Delta$ is given by
\begin{equation*}
\nabla^{g}\phi(p):=g^{-1}(p)\nabla\phi(p)=\sum_{i=1}^{n}p_{i}\left(\partial_{i}\phi(p)-\sum_{j=1}^{n}p_{j}\partial_{j} \phi(p)\right)\epsilon_{i}
\end{equation*}
which is typically referred to as Fisher -or Shahshahani gradient in information geometry and mathematical biology, respectively \citep[c.f.][]{hofrichter2017information,ay2017information,harper2009information,bla}.\par

Now denote by $d^{A}f$ the Fr\'{e}chet derivative on the Aitchison simplex determined as usual via
\begin{equation}\label{eqn: frechet}
\lim_{q\to e}\|q\|^{-1}_{A}|f(p\oplus q)-f(p)-d^{A}f(p)q|=0,
\end{equation}
provided the limit exists (recall that in the previous formula $e$ is the neutral element of the Aitchison simplex $e=n^{-1}\mathbf{1}$). Given $f\in C^{1}(\Delta)$ and $p\in\Delta$, by Riesz representation theorem, we can introduce the \textit{Aitchison gradient} $\nabla^{A}f(p)$ as the unique element in $\Delta$ obeying 
\begin{equation*}
d^{A}f(p)q=\langle\nabla^{A}f(p),q\rangle_{A}\quad\text{ for all } q\in\Delta,
\end{equation*}
 and we claim

\begin{lem}
Let $f\in C^{1}(\Delta)$. Then
\begin{equation*}
\nabla^{A}f=\sfm(\nabla^{g}f).
\end{equation*}
\end{lem}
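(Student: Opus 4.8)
The plan is to identify the Aitchison gradient through the $\clr$ transform, using the isometry property (\ref{eqn: traforule})(ii), and then relate the resulting object in $H$ back to the Fisher gradient via the explicit formula for $\sfm$. Concretely, for $f\in C^1(\Delta)$ set $F:=f\circ\sfm\colon H\to\RR$. Since $\sfm=\clr^{-1}$ is a Hilbert space isomorphism from $(H,(\cdot,\cdot)_H)$ to $(\Delta,\langle\cdot,\cdot\rangle_A)$ intertwining $+$ with $\oplus$ and scalar multiplication with $\odot$ (by (\ref{eqn: traforule})(i)), the Fréchet derivative $d^A f(p)$ pulls back to the ordinary Fréchet derivative $dF(x)$ at $x=\clr(p)$, and the Riesz representative of $d^Af(p)$ in $(\Delta,\langle\cdot,\cdot\rangle_A)$ is exactly $\sfm$ applied to the Riesz representative of $dF(x)$ in $(H,(\cdot,\cdot)_H)$. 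That Riesz representative in $H$ is nothing but the Euclidean gradient of $F$ projected onto $H$, i.e. $\nabla^H F(x) = \Pi_H \nabla F(x)$ where $\Pi_H = \operatorname{Id}_n - \tfrac1n\mathbf 1_n\otimes\mathbf 1_n$ is the orthogonal projection onto $H$ (cf. (\ref{eq:contrastprop})).

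The heart of the computation is then the chain rule $\nabla F(x) = (D\sfm(x))^\top \nabla f(\sfm(x))$, combined with the identity for the Jacobian of softmax. A direct computation gives $\partial_j \sfm_i(x) = \sfm_i(x)(\delta_{ij}-\sfm_j(x))$, so with $p=\sfm(x)$ we have $D\sfm(x) = \diag(p) - p\otimes p$, which is symmetric and equals precisely the inverse Fisher metric tensor $g^{-1}(p)$ from (\ref{eq:invtens}). Hence $\nabla F(x) = g^{-1}(p)\,\nabla f(p) = \nabla^g f(p)$. It remains only to check that $\Pi_H$ acts trivially here: since each row of $\diag(p)-p\otimes p$ sums to $p_i - p_i\sum_j p_j = 0$, the vector $\nabla^g f(p)$ already lies in $H$, so $\nabla^H F(x) = \Pi_H \nabla^g f(p) = \nabla^g f(p)$. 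Applying $\sfm$ to both sides of the Riesz identification yields $\nabla^A f(p) = \sfm(\nabla^g f(p))$, as claimed.

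The step I expect to require the most care is the functoriality of the Riesz representative under the isometry, i.e. justifying rigorously that "pull back the Fréchet derivative along $\sfm$, take the Riesz representative, push forward along $\sfm$" coincides with the intrinsic Aitchison gradient defined by (\ref{eqn: frechet}). This is conceptually routine for Hilbert space isomorphisms but one must be attentive to the fact that $\sfm$ maps between a genuine linear subspace $H\subset\RR^n$ and the non-linear submanifold $\Delta\subset\RR^n$; the key point making everything work is that $\sfm$ is linear \emph{with respect to the Aitchison operations} $\oplus,\odot$, so that $f(p\oplus q) = F(\clr(p)+\clr(q))$ and the difference quotient in (\ref{eqn: frechet}) transforms into an ordinary difference quotient in $H$. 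Everything else — the Jacobian identity for $\sfm$ and the vanishing of the $H$-projection — is a short explicit calculation.
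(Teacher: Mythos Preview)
Your proposal is correct and follows essentially the same route as the paper: both define $F=f\circ\sfm$, use the isometry $\clr$ to rewrite $f(p\oplus q)=F(\clr(p)+\clr(q))$ and reduce the Fr\'echet derivative on $\Delta$ to the ordinary one on $H$, then invoke the Jacobian identity $D\sfm=\diag(\sfm)-\sfm\otimes\sfm=g^{-1}$ together with the chain rule to identify the $H$-gradient with $\nabla^g f$. Your explicit remark that $\nabla^g f(p)\in H$ (so the projection $\Pi_H$ is redundant) is a nice clarification the paper leaves implicit.
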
 

\begin{proof}
We set $\bar{f}=f\circ\sfm$ and rewrite $f(p\oplus q)=\bar{f}(\clr(p)+\clr(q))$. Now a Taylor expansion around $\clr(p)$ yields
\begin{equation*}
f(p\oplus q)=f(p)+(\nabla\bar{f}(\clr(p)),\clr(q))+o\left(\|q\|_{A}^{2}\right).
\end{equation*}

But since $\|q\|^{-1}_{A}o(\|q\|_{A}^{2})\rightarrow 0$ as $q\to e$, plugging the previous expansion into (\ref{eqn: frechet}) necessitates
\begin{equation*}
d^{A}f(p)q=(\nabla\bar{f}(\clr(p)),\clr(q)).
\end{equation*}
Next, observe that the Jacobian of $\sfm$ has the remarkable form 
\begin{equation}\label{eq:jac}
D\sfm=\operatorname{diag}(\sfm)-\sfm\otimes\sfm,
\end{equation}
so that by the chain rule
\begin{equation*}
(\nabla \bar{f})\circ\clr(p)=\nabla^{g} f(p).
\end{equation*}
Hence,
\begin{equation*}
(\nabla\bar{f}(\clr(p)),\clr(q))=(\nabla^{g}f(p),\clr(q))_{H}=\langle\sfm(\nabla^{g}f(p)),q\rangle_{A},
\end{equation*}
which yields the claim.   
\end{proof} 
We remark that similar computations have been done before in \citep{simpcalc}. Due to a different definition for the derivative their gradient differs from ours and simply equals $\nabla^{g}$.\par

\invisiblesection{The Aitchison diffusion aka Brownian motion on the simplex}     
  
\medskip      
 \begin{center}
3. T{\footnotesize HE AITCHISON DIFFUSION AKA BROWNIAN MOTION ON THE SIMPLEX}
\end{center}

Recall that, given a probability space $(\Omega,\mathcal{F},\mathbb{P})$ and a topological Abelian group $(G,\ast)$, a $G$-valued stochastic process is called \textit{Brownian motion on $G$} provided
   
\begin{enumerate}[label=A\arabic*.]
\item For $0\le t_{1}\le t_{2}\le\dots\le t_{m}\le T$ and every $m\in\mathbb{N}$, the increments \par 
\noindent ${X_{t_{1}},X_{t_{2}}\ast X_{t_{1}}^{-1},\dots, X_{t_{m}}\ast X^{-1}_{t_{m-1}}}$ are mutually independent. 
\item For any $0\le s\le t\le T$ the law of the increments $X_{t}\ast X_{s}^{-1}$ depends only on $t-s$.
\item $t\mapsto X_{t}$ is continuous a.s.
\end{enumerate}

The main result of this section establishes a deep connection between stochastic replicator dynamics and Brownian motion:

\begin{thm}[Aitchison diffusion as Brownian motion on $\Delta$]\label{thm: Aidiff}
For every $T>0$ there exists a unique $\Delta$-valued process $X$ solving on $[0,T]$ the Stratonovich SDE

\begin{equation}\tag{\ref{eq:aidif}}
\begin{aligned}
dX^{i}_{t}&=X^{i}_{t}\left(\circ dB^{i}_{t}-\sum_{j=1}^{n}X^{j}_{t}\circ dB^{j}_{t}\right),\quad i=1,\dots n\\
X_{0}&=p\in\Delta
\end{aligned}
\end{equation} 
Moreover, $X$ satisfies the properties A1.-A3. with $G=(\Delta,\oplus)$ the Aitchison simplex. Thus, $X$ is a Brownian motion on $\Delta$.
\end{thm}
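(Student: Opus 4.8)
The plan is to exploit that the centered log-ratio transform $\clr$ is simultaneously a group isomorphism and an isometry from $(\Delta,\oplus,\langle\cdot,\cdot\rangle_A)$ onto the Euclidean space $(H,+,(\cdot,\cdot)_H)$, and that the property of being a Brownian motion on a topological abelian group is preserved under such isomorphisms. So I would reduce (\ref{eq:aidif}) via $\clr$ to the canonical Brownian motion on $H$, for which A1--A3 are classical, and then transport everything back to $\Delta$ with $\sfm=\clr^{-1}$. Concretely, set $\bar B_t:=\tfrac1n\sum_{j=1}^n B^j_t$ and let $W_t:=B_t-\bar B_t\mathbf{1}$ be the orthogonal projection of $B$ onto $H$; since the projection is linear, $W$ is a Brownian motion on $H$, with continuous paths and independent, stationary increments. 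The candidate solution I would verify is $X_t:=\sfm(\clr(p)+W_t)$, equivalently $\clr(X_t)=\clr(p)+W_t$.

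\textbf{Well-posedness.} For existence I would check directly that $X_t:=\sfm(\clr(p)+W_t)$ solves (\ref{eq:aidif}): it is $\Delta$-valued with a.s.\ continuous paths and $X_0=p$, and applying the Stratonovich chain rule together with the Jacobian identity (\ref{eq:jac}), $D\sfm=\diag(\sfm)-\sfm\otimes\sfm$, gives
\begin{equation*}
dX^i_t=\sum_{j=1}^n\big(X^i_t\delta_{ij}-X^i_tX^j_t\big)\circ dW^j_t
       =X^i_t\Big(\circ dW^i_t-\sum_{j=1}^n X^j_t\circ dW^j_t\Big);
\end{equation*}
substituting $\circ dW^i_t=\circ dB^i_t-\circ d\bar B_t$ and using $\sum_j X^j_t\equiv1$ makes the two copies of $\circ d\bar B_t$ cancel, leaving exactly (\ref{eq:aidif}). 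For uniqueness I would conversely apply the Stratonovich chain rule to $Z_t:=\clr(X_t)$ for an arbitrary $\Delta$-valued solution $X$ (legitimate since $\clr$ is smooth on $\RR^n_{>0}\supseteq\Delta$); the same cancellation yields $dZ^i_t=\circ dB^i_t-\tfrac1n\sum_k\circ dB^k_t=dW^i_t$, so $Z_t=\clr(p)+W_t$ and $X_t=\sfm(Z_t)$ is forced. (Alternatively, the It\^o form of (\ref{eq:aidif}) has locally Lipschitz coefficients, giving pathwise uniqueness up to explosion, and the constructed solution never leaves the bounded set $\Delta$.)

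\textbf{Properties A1--A3.} Continuity (A3) is immediate from continuity of $\sfm$ and of the paths of $W$. For A1 and A2 I would use the morphism rules $\clr(p\oplus q)=\clr(p)+\clr(q)$ and $\clr(q^{-1})=-\clr(q)$ from (\ref{eqn: traforule}): for $0\le t_1\le\dots\le t_m\le T$ one gets $\clr(X_{t_1})=\clr(p)+W_{t_1}$ and $\clr(X_{t_k}\oplus X_{t_{k-1}}^{-1})=W_{t_k}-W_{t_{k-1}}$ for $k\ge2$, the initial datum cancelling in each increment. Since $\clr$ is a fixed Borel bijection, mutual independence of these elements of $\Delta$ follows from that of the Brownian increments $W_{t_1},W_{t_2}-W_{t_1},\dots$ (A1), and the law of $X_t\oplus X_s^{-1}$, being the $\sfm$-pushforward of the law of $W_t-W_s$, depends only on $t-s$ (A2). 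Hence $X$ is a Brownian motion on $(\Delta,\oplus)$.

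\textbf{Main obstacle.} I do not expect a serious obstruction: once the $\clr$-conjugation is in place the statement reduces to textbook facts about Euclidean Brownian motion. The points requiring genuine care are the rigorous use of the Stratonovich chain rule in both directions --- the coefficients of (\ref{eq:aidif}) are smooth but not globally Lipschitz, so one must invoke that the solution stays in $\Delta$ and never reaches its boundary --- and the (routine but worth stating) verification that the projected noise $W$ is genuinely a Brownian motion on the Hilbert space $H$ rather than a degenerate diffusion in $\RR^n$.
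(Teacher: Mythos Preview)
Your proposal is correct and is close in spirit to the paper's \emph{alternative} derivation, which also arrives at the explicit formula $X_t=p\oplus\sfm(B_t)$ (equal to your $\sfm(\clr(p)+W_t)$, since $\sfm$ ignores the $\bar B_t\mathbf{1}$ component). The difference is that the paper reaches this formula by first showing that the Stratonovich vector fields $Z_i=\sum_j g^{ij}\partial_j$ commute and then identifying their joint flow as $E(\xi,p)=p\oplus\sfm(\xi)$, whereas you bypass the commuting-fields argument entirely and verify the candidate by a direct Stratonovich chain-rule computation in both directions.

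The paper's \emph{primary} proof, however, takes a genuinely different route. For existence and uniqueness it writes the SDE in It\^o form, proves that both the diffusion matrix $g^{-1}$ and the Stratonovich corrector $b$ are globally Lipschitz on $\Delta$ with explicit constants, and invokes Picard iteration; for properties A1--A2 it does not use the explicit representation at all but instead introduces the canonical Aitchison Brownian motion $\hat X_t=\bigoplus_i B^i_t\odot e_i=\ilr^{-1}(B_t)$, computes its generator $\hat L$ in H\"ormander form, and shows that $\hat L$ coincides with the generator $L$ of the SDE, so that $X$ and $\hat X$ agree in law. Your approach is shorter and yields the pathwise formula and the group-Brownian properties simultaneously; the paper's primary proof, on the other hand, records the explicit Lipschitz constants and the generator identity $L=\hat L$, which are reused later (e.g.\ in the Fokker--Planck analysis of Section~5).
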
 

We approach Theorem \ref{thm: Aidiff} by some preliminary considerations. First observe that since $(\Delta,\oplus,\odot)$ is a finite dimensional vector space, there is a canonical way to introduce Brownian motion on the Aitchison simplex. Namely, we simply take an orthonormal basis $\lbrace e_{1},\dots,e_{n-1}\rbrace$ of $\Delta$ and $n-1$ independent (one-dimensional) standard Brownian motions $B^{i},\ i=1,\dots,n-1$ and define for all $t\ge0$
\begin{equation}\label{eq:BMAicoo}
\hat{X}_{t}:=\bigoplus_{i=1}^{n-1}B^{i}_{t}\odot e_{i}.
\end{equation}
Then clearly $\hat{X}$ satisfies A1.-A3. Next, we would like to find a characterisation of $\hat{X}$ in conventional Euclidean coordinates. Of course, (\ref{eq:BMAicoo}) means nothing but $\hat{X}=\ilr^{-1}(B)$. Thus, by the Stratonovich chain rule we find that $\hat{X}$ satisfies
\begin{align*}
d\hat{X}^{i}_{t}&=d\sfm_{i}(\Psi^{\top}B_{t})=\sum_{j=1}^{n-1}\sum_{k=1}^{n}\Psi_{jk}(\partial_{k}\sfm_{i})(\Psi^{\top}B_{t})\circ dB^{j}_{t}\\
&=\sum_{j=1}^{n-1}\sum_{k=1}^{n}\Psi_{jk}\sfm_{i}(\Psi^{\top}B_{t})(\delta_{ik}-\sfm_{k}(\Psi^{\top}B_{t}))\circ dB^{j}_{t}\\
&=\sum_{j=1}^{n-1}\sum_{k=1}^{n}\Psi_{jk}g^{ik}(\hat{X}_{t})\circ dB^{j}_{t},
\end{align*}
with $g^{-1}$ defined as in (\ref{eq:invtens}). If we introduce for $i=1,\dots,n-1$ the maps $\hat{Z}_{i}\colon\Delta\to\mathbb{R}^{n}$ with
\begin{equation}\label{eq:vecmap}
\hat{Z}_{i}(p):=\sum_{j,k=1}^{n}\Psi_{ik}g^{jk}(p)\epsilon_{j},
\end{equation}  
then the Brownian motion on $\Delta$ defined as in (\ref{eq:BMAicoo}) satisfies
\begin{equation*}
d\hat{X}_{t}=\sum_{i=1}^{n-1}\hat{Z}_{i}(\hat{X}_{t})\circ dB^{i}_{t}.
\end{equation*} 
Therefore, the generator $\hat{L}$ of $\hat{X}$ is canonically given in H\"ormander form as
\begin{equation*}
\hat{L}=\frac{1}{2}\sum_{i=1}^{n-1}\hat{Z}_{i}^{2},
\end{equation*} 
where, by slight abuse of notation, we identified with the maps in (\ref{eq:vecmap}) the vector fields on $\Delta$ given by
\begin{equation*}
\hat{Z}_{i}(p):=\sum_{j,k=1}^{n}\Psi_{ik}g^{jk}(p)\partial_{j}.
\end{equation*}  
Now setting
\begin{equation*}
g^{ik}_{l}(p):=\partial_{l}g^{ik}(p)=\delta_{il}\delta_{ik}-p_{k}\delta_{il}-p_{i}\delta_{lk},
\end{equation*}
and
\begin{equation*}
G^{ij}(p):=(G^{-1}(p))_{ij}:=\left({g^{-1}(p)}^{2}\right)_{ij}=p_{i}\left(p_{i}\delta_{ij}-p_{i}p_{j}-p_{j}^{2}+p_{j}\sum_{k=1}^{n}p_{k}^{2}\right)
\end{equation*}
we can expand $\hat{L}$ and obtain, using (\ref{eq:contrastprop}) and the the fact that the rows of $g^{-1}$ sum to zero,
\begin{align}\label{eq:ilrgen}
2\hat{L}&=\sum_{j,k,l,m=1}^{n}(\Psi^{\top}\Psi)_{km}\left(g^{jk}g^{lm}\partial_{jl}+g^{jk}g^{lm}_{j}\partial_{l}\right)\nonumber\\
&=\sum_{j,l=1}^{n}G^{jl}\partial_{jl}+\sum_{l=1}^{n}\left(\sum_{j,m=1}^{n}g^{jm}g^{lm}_{j}\right)\partial_{l}.
\end{align}
From this observation we can easily infer the
  
\begin{proof}[Proof of Theorem \ref{thm: Aidiff}]
Although existence and uniqueness of solutions to (\ref{eq:aidif}) is well known, since (\ref{eq:aidif}) is just a special case of the general stochastic replicator equation (\ref{eq:fudsde}), we briefly sketch the argument for the sake of completeness. Recall, that the Aitchison diffusion (\ref{eq:aidif}) in It\={o} form obeys
\begin{equation}\label{eq:aidiffito}
dX_{t}=b(X_{t})dt+g^{-1}(X_{t})dB_{t},
\end{equation}
with Stratonovich corrector
\begin{equation}\label{eq:stratcor}
b_{i}(p)=\frac{1}{2}\sum_{j,k=1}^{n}g^{jk}(p)g^{ik}_{j}(p)=p_{i}\left(\sum_{k=1}^{n}p^{2}_{k}-p_{i}\right)=-\left(g^{-1}(p)p\right)_{i},\quad i=1,\dots,n.
\end{equation}
Observe that both maps, $g^{-1}\colon\Delta\to\mathbb{R}^{n\times n}$ and $b\colon\Delta\to\mathbb{R}^{n}$ are Lipschitz-continuous (w.r.t the standard topology on $\Delta$). Indeed, for every $p,q\in\Delta$ we have, 
\begin{align*}
&\|g^{-1}(p)-g^{-1}(q)\|^{2}=\sum_{i,j=1}^{n}(g^{ij}(p)-g^{ij}(q))^{2}=\sum_{i,j=1}^{n}((p_{i}-q_{i})\delta_{ij}+q_{i}q_{j}-p_{i}p_{j})^{2}\\
\le& 2\|p-q\|^{2}+2\sum_{i,j=1}^{n}(q_{i}q_{j}-q_{i}p_{j}+q_{i}p_{j}-p_{i}p_{j})^{2}\le10\|p-q\|^{2}.
\end{align*}
Regarding the drift component, one finds

\begin{align*}
&\|b(p)-b(q)\|^{2}\le 2\sum_{i=1}^{n}\left(q_{i}^{2}-p_{i}^{2}\right)^{2}+2\sum_{i=1}^{n}\left(p_{i}\sum_{k=1}^{n}p_{k}^{2}-q_{i}\sum_{k=1}^{n}q_{k}^{2}\right)^{2}\\
\le & 8\|p-q\|^{2}+4\sum_{i=1}^{n}p_{i}^{2}\left(\sum_{k=1}^{n}p_{k}^{2}-q_{k}^{2}\right)^{2}+4\sum_{i=1}^{n}(p_{i}-q_{i})^{2}\left(\sum_{k=1}^{n}q_{k}^{2}\right)^{2}\\
\le & (12+16n)\|p-q\|^{2}.
\end{align*}

Since moreover, $\operatorname{im}g^{-1}(p)=T_{p}\Delta=H$ (with $H$ as defined in (\ref{eq:H})) holds for every $p\in\Delta$, existence and uniqueness of a continuous and $\Delta$-valued solution to (\ref{eq:aidif}) follow by standard Picard iteration. In particular such a solution satisfies A3.\par 

We are left to check that $X$ satisfies the properties A1. and A2. But comparing the Stratonovich corrector (\ref{eq:stratcor}) with the first order part of $\hat{L}$ in (\ref{eq:ilrgen}), we realise that $\hat{L}$ and the generator $L$ associated to the Aitchison diffusion $X$ from (\ref{eq:aidif}) and (\ref{eq:aidiffito}), respectively, coincide. Consequently, the Brownian motion on the Aitchison simplex $\hat{X}$ as introduced in (\ref{eq:BMAicoo}) and the Aitchison diffusion $X$ have the same law and thus $X$ also satisfies A1. and A2.
\end{proof}

Subsequently, we want to point out a different and instructive way to deduce
 Theorem \ref{thm: Aidiff} by rather geometric arguments. At that, we mainly follow the lines of \citep[][ch. 8]{stroock2003markov}. First, observe that akin to $\hat{X}$, we can rewrite (\ref{eq:aidif}) as

\begin{equation}\label{eqn:aistraform}
dX_{t}=\sum_{i=1}^{n}Z_{i}(X_{t})\circ dB^{i}_{t},\quad X_{0}=p\in\Delta,
\end{equation}
where now
\begin{equation*}
 Z_{i}:=\sum_{j=1}^{n}g^{i j}\epsilon_{j}.
 \end{equation*}
So, the generator $L$ of the Aitchison diffusion in H\"ormander form reads
\begin{equation}\label{eq:L}
L=\frac{1}{2}\sum_{i=1}^{n}Z_{i}^{2},
\end{equation}
where again we identified the maps $Z_{i}$ with the corresponding vector fields
\begin{equation}\label{egn:vecfields}
 Z_{i}:=\sum_{j=1}^{n}g^{i j}\partial_{j}.
\end{equation}
We omit the proof of the following result, which is tedious but straight forward. 
 
\begin{lem}\label{lem:com}
The vector fields $Z_{1},\dots,Z_{n}$ as defined in (\ref{egn:vecfields}) commute.
\end{lem}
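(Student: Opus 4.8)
The plan is to show that the Lie bracket $[Z_i, Z_j]$ vanishes identically on $\Delta$ for all $i,j \in \{1,\dots,n\}$. Since $[Z_i,Z_j] = \sum_{l=1}^n (Z_i^l \partial_l Z_j^k - Z_j^l \partial_l Z_i^k)\partial_k$ where $Z_i^j = g^{ij}(p) = p_i(\delta_{ij}-p_j)$, the entire computation reduces to differentiating the explicit polynomial expressions $g^{ij}$ and collecting terms. Concretely, I would first record $\partial_l g^{ik}(p) = \delta_{il}\delta_{ik} - p_k\delta_{il} - p_i\delta_{lk}$ (this is the quantity $g^{ik}_l$ already introduced in the excerpt), and then expand
\begin{equation*}
[Z_i,Z_j]^k = \sum_{l=1}^n \left( g^{il}\, g^{jk}_l - g^{jl}\, g^{ik}_l \right).
\end{equation*}

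Next I would substitute the closed forms and split the sum over $l$ according to the three Kronecker-delta terms in $g^{jk}_l$. Using that the rows of $g^{-1}$ sum to zero, i.e. $\sum_{l=1}^n g^{il}(p) = 0$ (equivalently $\sum_l p_i(\delta_{il}-p_l) = p_i(1 - \sum_l p_l) = 0$ since $p \in \Delta$), several of the resulting contractions collapse. For instance $\sum_l g^{il}\delta_{lk} = g^{ik}$, while $\sum_l g^{il} p_k \delta_{il} \cdot(\dots)$-type terms pick out $l = i$. Carrying this out, the term $\sum_l g^{il} g^{jk}_l$ becomes a concrete cubic polynomial in the $p$'s; doing the same for the swapped term $\sum_l g^{jl} g^{ik}_l$ and subtracting, every monomial should cancel by symmetry. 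The key structural facts making this work are precisely $\mathbf{1}^\top g^{-1}(p) = 0$ and the symmetry $g^{ij} = g^{ji}$, together with the fact that $g^{ik}_l$ is symmetric in certain index pairs.

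The main obstacle is purely bookkeeping: there is no conceptual difficulty, only the risk of a sign or index error while tracking the roughly six to nine monomial types that appear after full expansion. To keep this manageable I would organize the computation by first treating the diagonal case $i = j$ (trivially zero), then the off-diagonal case, and within the latter separate the coefficient of each distinct monomial pattern ($\delta$-free cubic terms, terms with one $\delta$, terms with two $\delta$'s) and verify cancellation pattern-by-pattern. A cleaner alternative, which I would mention as a remark, is to invoke Theorem~\ref{thm: Aidiff}: since $\hat X = \ilr^{-1}(B)$ is built from $n-1$ \emph{independent} Brownian motions driving the coordinate vector fields $\hat Z_1,\dots,\hat Z_{n-1}$, and since flowing along $\ilr^{-1}$ conjugates those to the $\hat Z_i$, the pushforward vector fields must commute (the pullbacks $\partial/\partial x_i$ on $H \cong \mathbb{R}^{n-1}$ commute and $\ilr$ is a diffeomorphism); the $Z_i$ of \eqref{egn:vecfields} are $\RR$-linear combinations of the $\hat Z_j$ with constant coefficients (via $\Psi$), hence they commute as well. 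Either route establishes Lemma~\ref{lem:com}; I would present the direct computation as the primary argument since the geometric one is logically downstream of the theorem it is meant to help reprove.
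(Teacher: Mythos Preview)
Your proposal is correct and matches the paper's (omitted) approach: the paper explicitly states ``We omit the proof of the following result, which is tedious but straight forward,'' i.e.\ the intended argument is precisely the direct Lie-bracket computation you outline. In fact the computation is shorter than you fear: after the Kronecker contractions one gets $[Z_i,Z_j]^k = (g^{ij}+p_ip_j)(\delta_{jk}-\delta_{ik}) = p_i\delta_{ij}(\delta_{jk}-\delta_{ik})$, which vanishes identically, so the ``six to nine monomial types'' collapse almost immediately once you use $g^{ij}=g^{ji}$.
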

  
Elementary facts from ODE theory ensure that we can determine uniquely a smooth map ${E\colon\mathbb{R}^{n}\times\Delta\to\mathbb{R}^{n}}$ satisfying  
\begin{equation}\label{eq:vecsys}
\frac{d}{dt}E(t\xi,p)=\sum_{i=1}^{n}\xi_{i}Z_{i}(E(t\xi,p)),\quad E(0,p)=p\in\Delta.
\end{equation} 
Moreover, Lemma \ref{lem:com} necessitates that
\begin{equation}\label{eq:vecprop}
\partial_{\xi_{k}}E(\xi,p)=Z_{k}(E(\xi,p))
\end{equation} 
for all $(\xi,p)\in\mathbb{R}^{n}\times\Delta$ and $k=1,\dots,n$. Now take an $n$-dimensional standard Brownian motion $B$ be on $(\Omega,\mathcal{F},\mathcal{F}_{t},\mathbb{P})$ and consider $X:=E(B,p)$.
Then, the Stratonovich chain rule and (\ref{eq:vecprop}) entail
\begin{equation*}
dX_{t}=\sum_{i=1}^{n}Z_{i}(X_{t})\circ dB^{i}_{t}
\end{equation*}  
and $X_{0}=E(0,p)=p\in\Delta$. Thus, $X$ is a solution to (\ref{eq:aidif}) and moreover defines a flow of diffeomorphisms on $\Delta$. On the other hand, recall from (\ref{eq:jac}) that
\begin{equation*}
D\sfm=\operatorname{diag}(\sfm)-\sfm\otimes\sfm.
\end{equation*}
Using this identity, we find that $E(\xi,p)=\sfm(\xi)\oplus p$ provides a solution to (\ref{eq:vecsys}), which in turn implies that the Aitchison diffusion starting in $p\in\Delta$ is simply given by
\begin{equation}
X_{t}=p\oplus\sfm(B_{t}).
\end{equation}
Of course, $X$ inherits the properties A1.- A3. from $B$ by the transformation rules (\ref{eqn: traforule}). A simulation of a trajectory of the Aitchison diffusion on $\Delta_{3}$ is depicted as a ternary plot in figure 1.

\begin{figure}[t]
	\centering
  \includegraphics[trim = 5mm 40mm 5mm 40mm, clip, width=0.65\textwidth,]{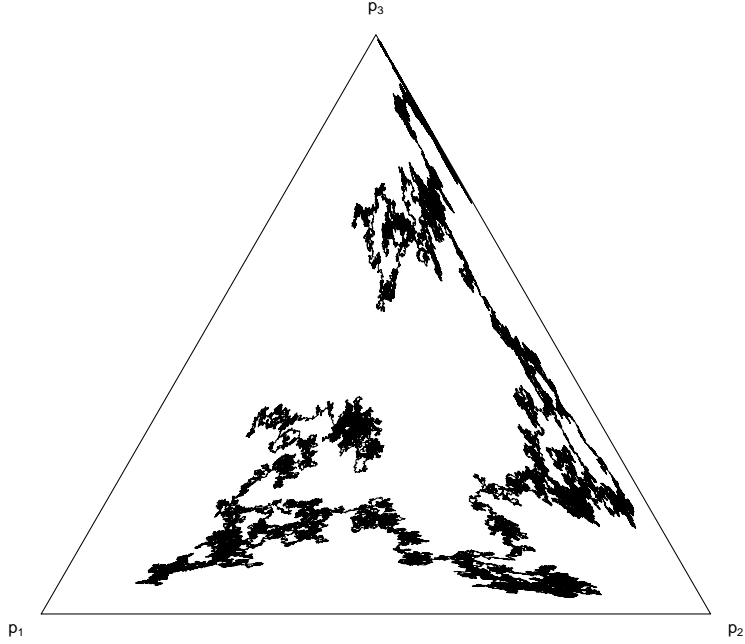}
	\caption{Simulation of a trajectory of an Aitchison diffusion starting at the barycenter}
	\label{fig2}
\end{figure}

In the following we denote by $P_{t}^{X}$ the Markov semigroup associated to $X$, i.e. for bounded and measurable functions $f$ on $\Delta$ we set
\begin{equation}
P^{X}_{t}f(p):=\mathbb{E}\left[f(X_{t})|X_{0}=p\right].
\end{equation}
Also, we write $P_{t}^{B}$ for the Brownian semigroup. The characterisations of the Aitchison diffusion which we discussed so far imply immediately the following properties of $P_{t}^{X}$.

\begin{cor}[invariant measure and density kernel of the Aitchison semigroup]
\hfill
\begin{enumerate}[label=(\roman*)]
\item The Aitchison measure $\lambda_{A}$ is invariant and reversible for $P_{t}^{X}$.
\item $P_{t}^{X}$ admits a density kernel $(0,\infty)\times\Delta\times\Delta\ni (t,p,q)\mapsto p_{t}(p,q)$ with respect to $\lambda_{A}$, which is given by
\begin{equation*}
p_{t}(p,q)=(2\pi t)^{-\frac{n-1}{2}}e^{-\frac{\|p\ominus q\|_{A}^{2}}{2t}}.
\end{equation*}
In particular, $d_{A}$ is the intrinsic metric for $X$ and we have the Varadhan short time asymptotics
\begin{equation}\label{eq:varadhan}
\lim_{t\downarrow0}t\ln p_{t}(p,q)=-\frac{d^{2}_{A}(p,q)}{2}.
\end{equation}

\end{enumerate}
\end{cor}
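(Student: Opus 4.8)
The plan is to transport the whole statement to Euclidean coordinates by conjugating with the isometric log-ratio transform $\ilr$. From the geometric derivation of Theorem~\ref{thm: Aidiff} in Section~3 we know that $X$ has the same law as $\hat X=\ilr^{-1}(B)$ for an $(n-1)$-dimensional standard Brownian motion $B$; equivalently, $\ilr(X)$ is a standard Brownian motion on $\mathbb{R}^{n-1}$ started at $\ilr(p)$. Two further ingredients, both recorded earlier, make the conjugation exact: first, $\ilr$ is an isometry from $(\Delta,d_{A})$ onto $(\mathbb{R}^{n-1},d_{\mathbb{R}^{n-1}})$ by the transformation rule (\ref{eqn: traforule})(ii) together with (\ref{eqn: Aidist}); second, by definition of the Aitchison measure, $\ilr_{\ast}\lambda_{A}=\lambda_{n-1}$. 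So everything reduces to the classical facts for Brownian motion on $\mathbb{R}^{n-1}$.

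For part (i) I would simply note that Lebesgue measure $\lambda_{n-1}$ is invariant and reversible for the Brownian semigroup $P_{t}^{B}$ --- both statements being immediate from translation invariance and the symmetry $q_{t}(x,y)=q_{t}(y,x)$ of the Gauss--Weierstrass kernel $q_{t}(x,y)=(2\pi t)^{-(n-1)/2}\exp(-\|x-y\|^{2}/(2t))$. Since $P_{t}^{X}f=\big(P_{t}^{B}(f\circ\ilr^{-1})\big)\circ\ilr$, the change of variables $\ilr_{\ast}\lambda_{A}=\lambda_{n-1}$ turns the identities $\int P_{t}^{B}g\,d\lambda_{n-1}=\int g\,d\lambda_{n-1}$ and $\int (P_{t}^{B}g_{1})g_{2}\,d\lambda_{n-1}=\int g_{1}(P_{t}^{B}g_{2})\,d\lambda_{n-1}$ into the corresponding ones for $P_{t}^{X}$ and $\lambda_{A}$. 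Alternatively, (i) can be read off directly from $X_{t}=p\oplus\sfm(B_{t})$: Brownian motion on the abelian group $(\Delta,\oplus)$ leaves the Haar measure $\lambda_{A}$ invariant.

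For part (ii), using that $q_{t}$ is the transition density of $P_{t}^{B}$ with respect to $\lambda_{n-1}$, I would write, for bounded measurable $f$ on $\Delta$,
\[
P_{t}^{X}f(p)=\int_{\mathbb{R}^{n-1}}q_{t}(\ilr p,y)\,f(\ilr^{-1}y)\,\lambda_{n-1}(dy)=\int_{\Delta}q_{t}(\ilr p,\ilr q)\,f(q)\,\lambda_{A}(dq),
\]
the second equality being the substitution $y=\ilr q$. Hence the density kernel with respect to $\lambda_{A}$ is $p_{t}(p,q)=q_{t}(\ilr p,\ilr q)=(2\pi t)^{-(n-1)/2}\exp(-\|\ilr p-\ilr q\|^{2}/(2t))$, and $\|\ilr p-\ilr q\|^{2}=d_{A}^{2}(p,q)=\|p\ominus q\|_{A}^{2}$ by (\ref{eqn: traforule})(ii) and (\ref{eqn: Aidist}). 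The assertion that $d_{A}$ is the intrinsic metric of $X$ and the Varadhan short-time asymptotics (\ref{eq:varadhan}) then follow either by inspection of this explicit Gaussian expression, or by transporting the corresponding statements for Brownian motion on $\mathbb{R}^{n-1}$ --- whose intrinsic metric is the Euclidean one --- along the measure-preserving isometry $\ilr$.

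The proof is, at bottom, bookkeeping; the one place to stay alert is the dimension count. One must remember that $\Delta$ is $(n-1)$-dimensional even though its points carry $n$ coordinates, so that the normalising constant is $(2\pi t)^{-(n-1)/2}$, and the substitution in (ii) must be carried out against $\lambda_{A}$ itself rather than against the first $(n-1)$ marginal Lebesgue measure, for which the Radon--Nikodym factor $\big(\prod_{i}p_{i}\big)^{-1}$ from Section~2 would otherwise intrude.
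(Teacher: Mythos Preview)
Your proposal is correct and follows essentially the same route as the paper's proof: both arguments conjugate $P_{t}^{X}$ with $\ilr$ to reduce everything to the Brownian semigroup on $\mathbb{R}^{n-1}$, use $\ilr_{\ast}\lambda_{A}=\lambda_{n-1}$ for the change of variables, and read off the Gaussian kernel via the isometry $\|\ilr p-\ilr q\|=d_{A}(p,q)$. Your additional remarks (the Haar-measure viewpoint for (i) and the dimension-count caveat) are helpful elaborations but do not constitute a different method.
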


\begin{proof}
$(i)$ Let $f\colon\Delta\to\mathbb{R}$ be positive, measurable and bounded. Then,
\begin{align*}
\int_{\Delta}P_{t}^{X}f(p)\lambda_{A}(dp)&=\int\mathbb{E}\left[\hat{X}_{t}|\hat{X}_{0}=\ilr^{-1}(x)\right]dx=\int P^{B}_{t}f\circ\ilr^{-1}(x)dx\\
&=\int f\circ\ilr^{-1}(x)dx=\int_{\Delta}f(p)\lambda_{A}(dp)
\end{align*}  
proving invariance of $\lambda_{A}$. Reversibility follows by a similar argument.\par 
For $(ii)$, let $p:=\ilr^{-1}(x)$ for some $x\in\mathbb{R}^{n-1}$ and observe that
\begin{align*}
P^{X}_{t}f(p)&=\mathbb{E}\left[f\circ\ilr^{-1}(B_{t})|B_{0}=x\right]=\int f\circ\ilr^{-1}(y)(2\pi t)^{-\frac{n-1}{2}}e^{-\frac{\|x-y\|^{2}}{2t}}dy\\
&=\int_{\Delta}f(q)p_{t}(p,q)\lambda_{A}(dq),
\end{align*}
with
\begin{equation*}
p_{t}(p,q)=(2\pi t)^{-\frac{n-1}{2}}e^{-\frac{\|p\ominus q\|_{A}^{2}}{2t}}.
\end{equation*}
The previous identity immediately gives (\ref{eq:varadhan}).
\end{proof}

Notice that $\lambda_{A}$ is not a finite measure on $\Delta$ in accordance with the fact that $X_{t}$ is transient, meaning $X_{t}\rightarrow\partial\Delta$ as $t\rightarrow\infty$ a.s. This again follows from the simple observation
\begin{equation*}
\lim_{t\to\infty}\|X_{t}\|_{A}=\lim_{t\to\infty}\|B_{t}\|=\infty\ \text{a.s.}
\end{equation*}
In fact, one can even identify the distribution of the limit. Namely, first observe that
\begin{equation*}
\lim_{t\to\infty}X_{t}=\lim_{t\to\infty}p\oplus\sfm(B_{t})\overset{d}{=}p\oplus \lim_{t\to\infty}\sfm(\sqrt{t}B_{1})=p\oplus\argmax(B_{1}).
\end{equation*}
Since that standard normal distribution attributes zero mass to the set of points in $\mathbb{R}^{n}$ which have no distinct maximum it follow that $\lim_{t\to\infty}X_{t}\sim \operatorname{Uniform}(\lbrace\epsilon_{1},\dots,\epsilon_{n}\rbrace)$.

\invisiblesection{Three approximation results for the Aitchison diffusion}  

\medskip      
 \begin{center}
4. T{\footnotesize HREE APPROXIMATION RESULTS FOR THE AITCHISON DIFFUSION}
\end{center}

In this section we want to provide three approximation results for the Aitchison diffusion, which rely on classical theorems from stochastic analysis or optimal transport and PDE theory, namely Wong-Zakai approximation, Donsker's invariance principle and the JKO-scheme for the heat equation.\par\smallskip

Recall, that in the introduction we alleged that the Aitchison diffusion, as described via the Stratonovich SDE
\begin{equation*}
\begin{aligned}
dX^{i}_{t}&=X^{i}_{t}\left(\circ dB^{i}_{t}-\sum_{j=1}^{n}X^{j}_{t}\circ dB^{j}_{t}\right),\quad i=1,\dots n\\
X_{0}&=p\in\Delta,
\end{aligned}
\end{equation*}
has the natural interpretation of being a replicator equation in a white noise fitness landscape. The first statement gives a justification for this assertion. Indeed, we will see that (\ref{eq:aidif}) can be derived from a replicator dynamic in a coloured noise fitness landscape, when sending the correlation length to zero.\par 

To this end, consider on a probability space $(\Omega,\mathcal{F},\mathbb{P})$ the fitness landscape $y=(y_{t})_{t\ge 0}$, which is independent of the current population state, but evolves randomly and continuously in time as a Gaussian process with correlation structure
\begin{equation*}\label{eq: OUcorr}
\mathbb{E}[y^{i}_{s}y^{j}_{t}]=\delta_{ij}e^{\lambda^{-2}|s-t|},\quad i,j=1,\dots n,
\end{equation*}
where $\lambda>0$ is a correlation length parameter. In other words, we assume that the fitness landscape $y$ is given by the $n$-dimensional Ornstein-Uhlenbeck process 
\begin{equation*}\label{eq:OUSDE} 
\lambda^{2}dy_{t}=-y_{t}dt+\lambda dB_{t},
\end{equation*}
with $B$ an $n$-dimensional Brownian motion and $y_{0}\sim\mathcal{N}(0,\operatorname{Id}_{\mathbb{R}^{n}})$. Then we ascertain the following

\begin{thm}[Approximation of the Aitchison diffusion by replicator dynamics in a Gaussian fitness landscape]
For every $\lambda>0$ there exists a unique solution $p^{\lambda}$ to 

\begin{equation}\label{eq:replgaus}
\begin{aligned}
\dot{p}_{i}(t)&=\lambda^{-1}p_{i}(t)\left(y^{i}_{t}-\bar{y}_{t}\right),\quad i=1,\dots,n\\
p(0)&=p\in\Delta, 
\end{aligned}
\end{equation}  
where $\bar{y}_{t}=\sum_{1}^{n}p_{i}(t)y^{i}_{t}$ is the mean fitness. Moreover, the solutions $p^{\lambda}$ converge weakly to the Aitchison diffusion $X$ as $\lambda$ goes to zero. 
\end{thm}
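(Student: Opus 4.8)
The plan is to solve (\ref{eq:replgaus}) in closed form and then recognise the claimed convergence as a Wong--Zakai-type statement about the integrated Ornstein--Uhlenbeck process. Since the fitness $y$ is frequency independent, along any fixed continuous realisation of $y$ equation (\ref{eq:replgaus}) is explicitly solvable: differentiating the quotient shows that
\[
p^{\lambda}_{i}(t)=\frac{p_{i}\exp\!\big(\lambda^{-1}\!\int_{0}^{t}y^{i}_{s}\,ds\big)}{\sum_{j=1}^{n}p_{j}\exp\!\big(\lambda^{-1}\!\int_{0}^{t}y^{j}_{s}\,ds\big)}=p\oplus\sfm\!\Big(\lambda^{-1}\!\int_{0}^{t}y_{s}\,ds\Big)
\]
is a solution of (\ref{eq:replgaus}); uniqueness follows since, for a fixed continuous path of $y$, the right-hand side of (\ref{eq:replgaus}) is smooth in $p\in\Delta$, and the explicit formula certifies that the solution never leaves $\Delta$ and is therefore global. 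Recalling the representation $X_{t}=p\oplus\sfm(B_{t})$ obtained in Section~3, the theorem reduces to showing that the process $\lambda^{-1}\!\int_{0}^{\cdot}y_{s}\,ds$ converges weakly in $C([0,T],\mathbb{R}^{n})$ to an $n$-dimensional Brownian motion and to transporting this convergence through the map $\Phi\colon z_{\cdot}\mapsto\big(p\oplus\sfm(z_{t})\big)_{t\in[0,T]}$, which is continuous: $\sfm$ is ($1$-Lipschitz, hence) continuous and maps bounded sets into compact subsets of $\Delta$, $q\mapsto p\oplus q$ is continuous on $\Delta$, and a continuous map $\mathbb{R}^{n}\to\mathbb{R}^{n}$ acts continuously on $C([0,T],\mathbb{R}^{n})$ in the uniform topology.

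To identify the driving noise, integrate the defining equation $\lambda^{2}\,dy_{t}=-y_{t}\,dt+\lambda\,dB_{t}$ to obtain the pathwise identity
\[
\lambda^{-1}\!\int_{0}^{t}y_{s}\,ds=B_{t}-\lambda\big(y_{t}-y_{0}\big),\qquad t\in[0,T].
\]
Since only the law of $X$ enters the statement, we may realise the Aitchison diffusion on the same probability space with the same Brownian motion $B$, so that $p^{\lambda}=\Phi\big(B_{\cdot}-\lambda(y_{\cdot}-y_{0})\big)$ while $X=\Phi(B)$. By the continuous mapping theorem it then suffices to prove
\[
\sup_{t\in[0,T]}\lambda\,\big|y_{t}-y_{0}\big|\ \xrightarrow[\lambda\to0]{}\ 0\quad\text{in probability},
\]
for then $B_{\cdot}-\lambda(y_{\cdot}-y_{0})\to B$ in probability in $C([0,T],\mathbb{R}^{n})$, whence $p^{\lambda}\to X$ in probability in $C([0,T],\Delta)$ (Lemma~\ref{lem:topequiv} ensuring that the Aitchison and Euclidean topologies on $\Delta$ coincide), which is stronger than the asserted weak convergence.

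The only genuine estimate is this uniform smallness of $\lambda y$. Here I would use the scaling of the Ornstein--Uhlenbeck process: as $y_{0}$ carries the stationary law, $(y_{t})_{t\ge0}$ is equal in law to $(\tilde y_{t/\lambda^{2}})_{t\ge0}$ for a stationary Ornstein--Uhlenbeck process $\tilde y$ of unit relaxation rate, so that
\[
\sup_{t\in[0,T]}\lambda\,|y_{t}|\ \overset{d}{=}\ \lambda\sup_{s\in[0,T/\lambda^{2}]}|\tilde y_{s}|.
\]
A standard Gaussian maximal inequality (or the classical law-of-iterated-logarithm-type asymptotics for stationary Ornstein--Uhlenbeck maxima) gives $\mathbb{E}\big[\sup_{s\le S}|\tilde y_{s}|\big]=O(\sqrt{\log S})$ as $S\to\infty$, so the right-hand side is $O\big(\lambda\sqrt{\log(1/\lambda)}\big)\to0$; the same bound applies to $\lambda|y_{0}|$, which yields the claim. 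A variant sidestepping the maximal inequality is to prove tightness of $\{\lambda^{-1}\!\int_{0}^{\cdot}y_{s}\,ds\}_{\lambda>0}$ in $C([0,T],\mathbb{R}^{n})$ via Kolmogorov's criterion --- the bound $\mathbb{E}\big[\,|\lambda^{-1}\!\int_{s}^{t}y_{u}\,du|^{4}\,\big]\le C|t-s|^{2}$, uniform in $\lambda$, follows from the displayed identity together with $\mathbb{E}|y_{t}-y_{s}|^{2}\le C\min(1,|t-s|/\lambda^{2})$ --- and to combine it with the trivial convergence of finite-dimensional distributions ($\mathbb{E}|\lambda y_{t}|^{2}=O(\lambda^{2})$). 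I expect this control of the running maximum of the increasingly fast Ornstein--Uhlenbeck process on a fixed interval --- equivalently, tightness uniform in the correlation length $\lambda$ --- to be the main, and essentially the only, obstacle; everything else is algebra and the continuous mapping theorem.
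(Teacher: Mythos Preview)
Your argument is correct and in fact more explicit than what the paper does. The paper's own proof is essentially a pointer to the literature: existence and uniqueness are attributed to Picard iteration, and the convergence is declared to be ``a classical incidence of the Wong--Zakai approximation'' with references to \cite{eugene1965relation} and \cite{pavliotis2014stochastic}, without any further detail.

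Your route is genuinely different and more elementary. Rather than invoking the general Wong--Zakai machinery for SDEs with smoothed noise, you exploit two special features of the present setting: the frequency-independent fitness makes (\ref{eq:replgaus}) explicitly solvable as $p^{\lambda}_{t}=p\oplus\sfm\big(\lambda^{-1}\!\int_{0}^{t}y_{s}\,ds\big)$, and the Aitchison diffusion admits the closed-form representation $X_{t}=p\oplus\sfm(B_{t})$ established in Section~3. The integration-by-parts identity $\lambda^{-1}\!\int_{0}^{t}y_{s}\,ds=B_{t}-\lambda(y_{t}-y_{0})$ then reduces the whole question to a uniform estimate on $\lambda\sup_{[0,T]}|y|$, which you handle cleanly via the time-rescaling $y_{t}\overset{d}{=}\tilde{y}_{t/\lambda^{2}}$ and a logarithmic maximal bound for the stationary Ornstein--Uhlenbeck process. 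What this buys you is a self-contained argument that yields convergence in probability in $C([0,T],\Delta)$, which is strictly stronger than the weak convergence asserted in the theorem; the paper, by deferring to general Wong--Zakai results, obtains only weak convergence (and mentions that rough path methods would give more). Your approach would not generalise to frequency-dependent fitness landscapes where no closed form is available, whereas the Wong--Zakai framework would; but for the statement at hand your argument is both sharper and more transparent.
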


\begin{proof}
The existence of a unique solution $p^{\lambda}$ to (\ref{eq:replgaus}) follows again by standard Picard iteration. The convergence result is a classical incidence of the Wong-Zakai approximation \citep[c.f.][]{eugene1965relation}. Weak convergence can be proven along the lines of \citep[sec. 5.1.]{pavliotis2014stochastic}. Invoking arguments from rough path theory one could obtain stronger convergence results in H\"older topologies as well \citep[c.f.][]{friz2014course, lyons2007differential}.
\end{proof}

Our second approximation result establishes a connection between random walks on the Aitchison simplex and (\ref{eq:aidif}) in the spirit of Donsker's invariance principle \citep[c.f.][]{newman1981invariance}. Therefore, first recall that the time discrete analogue of the replication dynamic (\ref{eq: rep}) is given by the dynamical system
   \begin{equation*}
\begin{aligned}
 p_{i}^{(k+1)}&=\frac{p_{i}^{(k)}f^{i}\left(p^{(k)}\right)}{\sum_{j=1}^{n}p_{j}^{(k)}f^{j}\left(p^{(k)}\right)}\\
 p^{(0)}&=p\in\Delta.
\end{aligned}   
   \end{equation*}
It was pointed out in \citep{harper2009information} and \citep{shalizi2009dynamics} that we may think of this dynamic as modelling species adaptation by means of generation-wise Bayesian updating. \par  
As previously, we replace the frequency dependent fitness $f$ by random entities. More precisely, on some probability space $(\Omega,\mathcal{F},\mathbb{P})$ we take iid $\Delta$-valued random variables $f_{(1)},f_{(2)},\cdots$  and consider
   \begin{equation*}\label{eqn: simpwalk}
\begin{aligned}
 p_{i}^{(k+1)}&=\frac{p_{i}^{(k)}f_{(k+1)}^{i}}{\sum_{j=1}^{n}p_{j}^{(k)}f_{(k+1)}^{j}}\\
 p^{(0)}&=p\in\Delta.
\end{aligned}   
   \end{equation*}
Using the Aitchison calculus we rewrite the previous updating rule simply as
 
\begin{equation}\label{eqn: atchiwalk}
\begin{aligned}
 p^{(k+1)}&=p^{(k)}\oplus f_{(k+1)}\\
 p^{(0)}&=p\in\Delta.
\end{aligned}  
\end{equation}
    or explicitly
    \begin{equation*}
    p^{(k)}=\bigoplus_{i=1}^{k}f_{(i)}\oplus p.
    \end{equation*}
That means $\left(p^{(k)}\right)_{k\ge0}$ is nothing but the random walk on the Aitchison simplex $(\Delta,\oplus,\odot)$, induced through the iid random variables $\left(f_{(k)}\right)_{k\ge 1}$. As a direct consequence of the continuous mapping theorem and Donsker's invariance principle we infer

\begin{thm}[Approximation of the Aitchison diffusion by random walks]
Let $f_{(1)},f_{(2)},\dots$ be iid random variables with values in $\Delta$ such that for all $k\in\mathbb{N}$
\begin{equation*}
\mathbb{E}\clr(f_{(k)})=0
\end{equation*}
and
\begin{equation*}
\operatorname{cov}\left(\ilr_{i}(f_{(k)}),\ilr_{j}(f_{(k)})\right)=\delta_{ij},\quad i,j=1,\dots,n-1.
\end{equation*}
Let $(p^{(k)})_{k\ge0}$ be a random walk on $\Delta$ as given in (\ref{eqn: atchiwalk}) and assume for simplicity $p^{(0)}=p=e$, with $e$ being the barycenter of $\Delta$. Define the linear interpolation of the random walk as
\begin{equation*}
p_{t}:=p^{(\lfloor t\rfloor)}\oplus(t-\lfloor t\rfloor)\odot p^{(\lfloor t \rfloor +1)}.
\end{equation*}
Then the family of $C([0,1],\Delta)$- valued random elements $(n^{-\frac{1}{2}}\odot p_{nt})_{t\in[0,1]},\ n\in\mathbb{N},$ converges weakly to the Aichtison diffusion $(X_{t})_{t\in[0,1]}$ starting in $X_{0}=e$ as $n\to\infty$.
\end{thm}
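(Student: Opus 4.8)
The plan is to transport the entire problem through the isometric log-ratio transform, reduce it to the classical multivariate Donsker invariance principle in $\mathbb{R}^{n-1}$, and then carry the conclusion back to $\Delta$ with the continuous mapping theorem.

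First I would linearise the walk. Applying $\ilr$ to (\ref{eqn: atchiwalk}), using the transformation rule (i) in (\ref{eqn: traforule}) and $\ilr(e)=0$ (which is where the normalisation $p^{(0)}=e$ enters), one obtains $\ilr(p^{(k)})=S_{k}:=\sum_{i=1}^{k}\xi_{(i)}$ with $\xi_{(i)}:=\ilr(f_{(i)})=\Psi\,\clr(f_{(i)})$ iid random vectors in $\mathbb{R}^{n-1}$. The hypothesis $\mathbb{E}\,\clr(f_{(k)})=0$ forces $\mathbb{E}\,\xi_{(i)}=0$, and the covariance hypothesis says $\operatorname{cov}(\xi_{(i)})=\operatorname{Id}_{n-1}$. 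Since $\ilr$ intertwines $\odot,\oplus$ with scalar multiplication and addition, $\ilr(n^{-1/2}\odot p_{nt})=n^{-1/2}\,\ilr(p_{nt})$ is exactly the polygonal interpolation of the rescaled partial sums $n^{-1/2}S_{\lfloor nt\rfloor}$.

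Next I would invoke Donsker. As the $\xi_{(i)}$ are iid, centred and square-integrable with identity covariance, the multidimensional functional central limit theorem — obtained from the one-dimensional Donsker theorem via the Cram\'er--Wold device, or quoted directly in vector-valued form — gives weak convergence of $(n^{-1/2}\ilr(p_{nt}))_{t\in[0,1]}$ in $C([0,1],\mathbb{R}^{n-1})$ with the uniform topology to an $(n-1)$-dimensional standard Brownian motion $(B_{t})_{t\in[0,1]}$. I would then push this forward under $\ilr^{-1}=\sfm\circ\Psi^{\top}$, which is globally Lipschitz because $\sfm$ is $1$-Lipschitz (\citep[prop. 4]{gao2017properties}) and $\Psi^{\top}$ is linear; hence postcomposition $\omega\mapsto\ilr^{-1}\circ\omega$ is continuous from $C([0,1],\mathbb{R}^{n-1})$ to $C([0,1],\Delta)$ for the uniform metrics. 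The continuous mapping theorem then yields weak convergence of $(n^{-1/2}\odot p_{nt})_{t}=\ilr^{-1}(n^{-1/2}\ilr(p_{nt}))_{t}$ to $(\ilr^{-1}(B_{t}))_{t}$, and by (\ref{eq:BMAicoo}) together with Theorem~\ref{thm: Aidiff} the latter process is precisely the Aitchison diffusion started at $\ilr^{-1}(0)=e$, as required.

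The step I expect to be the main obstacle is the continuity of the decoding map on path space, with respect to whatever topology on $C([0,1],\Delta)$ one wants the conclusion to carry. It rests on the global $1$-Lipschitz estimate for $\sfm$ and on the comparison between $d_{\mathbb{R}^{n}}$ and $d_{A}$ recorded in Lemma~\ref{lem:topequiv} and (\ref{eq:normequi}): these two metrics on $\Delta$ are only topologically, not uniformly, equivalent, but they are uniformly equivalent on compact subsets, and this suffices once attention is restricted to the a.s.\ relatively compact range of the limiting path. Everything else is an essentially mechanical concatenation of standard results.
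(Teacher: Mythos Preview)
Your proposal is correct and follows precisely the route the paper itself indicates: the paper presents the theorem as an immediate consequence of Donsker's invariance principle together with the continuous mapping theorem, giving no further detail, and your argument supplies exactly those details. Your concern about the topology on $C([0,1],\Delta)$ is harmless but unnecessary, since $\ilr^{-1}$ is an isometry from $(\mathbb{R}^{n-1},\|\cdot\|)$ onto $(\Delta,d_A)$ and hence postcomposition is an isometry between the corresponding uniform path spaces.
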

  
Whereas the first two approximation results are in essence probabilistic, the last theorem in this section is based on a gradient flow interpretation of the Fokker-Planck equation associated to (\ref{eq:aidif}) and relies on the seminal work of Otto et al. in \citep{jordan1998variational} and \citep{otto2001geometry}.\par 

Let us start with a few definitions. We denote by $\mathcal{P}(\Delta)$ the set of all probability measures on $\Delta$ and by
\begin{equation*}
\mathcal{P}_{2}(\Delta):=\left\lbrace\mu\in\mathcal{P}(\Delta)\ \colon \int_{\Delta}\|p\|_{A}^{2}\mu(dp)<\infty\right\rbrace.
\end{equation*}
Then a natural distance measure on $\mathcal{P}_{2}(\Delta)$ is the Wasserstein distance
\begin{equation}\label{eq:Wadi}
W^{2}_{A}(\mu,\nu):=\inf_{\pi\in\Pi(\mu,\nu)}\int_{\Delta\times \Delta}d_{A}^{2}(p,q)\pi(dpdq),
\end{equation}
where for $\mu,\nu\in\mathcal{P}_{2}(\Delta)$ we have
\begin{equation*}
\Pi(\mu,\nu):=\lbrace\pi\in\mathcal{P}(\Delta\times \Delta) \, \colon \, \pi(A\times \Delta)=\mu(A)  \text{ and }  \pi(\Delta\times B)=\nu(B),\ A,B\in \mathcal{B}(\Delta) \rbrace.
\end{equation*}
Notice, that the Wasserstein distance on $\mathcal{P}_{2}(\Delta)$ is linked to the usual Wasserstein metric on $\mathcal{P}_{2}(\mathbb{R}^{n-1})$ through
\begin{equation}\label{eq:waequi}
W^{2}_{A}(\mu,\nu)=\inf_{\pi\in\Pi(\ilr_{\#}\mu,\ilr_{\#}\nu)}\int_{\mathbb{R}^{n-1}}\|x-y\|^{2}\pi(dxdy)=:W^{2}_{\mathbb{R}^{n-1}}(\ilr_{\#}\mu,\ilr_{\#}\nu).
\end{equation}
Finally, we denote by $\mathcal{S}_{A}$ the Boltzmann entropy on the Aitchison simplex:
\begin{equation*}
\mathcal{S}_{A}(\mu):=\begin{cases}
\int_{\Delta}\ln\left(\frac{d\mu}{d\lambda_{A}}(p)\right) \mu(dp), & \mu <\hspace{-0.2cm}< \lambda_{A}\\
+\infty, & \text{else.}
\end{cases}
\end{equation*}
Now let $X$ be an Aitchison diffusion with $X_{0}\sim\mu_{0}$ for some $\mu_{0}\in\mathcal{P}_{2}(\Delta)$. We denote by $\mu_{t}(dp)=\mu_{t}(p)\lambda_{A}(dp)$ the law of $X_{t}$. Then the densities $(\mu_{t})_{t\ge 0}$ provide a solution to the heat equation on the Aitchison simplex

\begin{equation}\label{eq:Aiheat}
\partial_{t}\mu=L\mu,\quad\mu_{|t=0}=\mu_{0}
\end{equation} 
and satisfy for any $\nu\in\mathcal{P}_{2}(\Delta)$ with finite entropy
\begin{equation}\label{eq:evi}
\frac{d}{dt}W^{2}_{A}(\mu_{t},\nu)\le\mathcal{S}_{A}(\nu)-\mathcal{S}_{A}(\mu_{t}).
\end{equation}
The meaning of the previous evolution-variational inequality is that (the laws of) the Aitchison diffusion evolve as a Wasserstein gradient flow of the Boltzmann entropy. To see, why (\ref{eq:evi}) is true, let $\rho$ be a solution to the heat equation on $\mathbb{R}^{n-1}$. Then, denoting by
\begin{equation*}
\mathcal{S}_{\mathbb{R}^{n-1}}(\mu):=\begin{cases}
\int_{\mathbb{R}^{n-1}}\ln\left(\frac{d\mu}{d\lambda_{\mathbb{R}^{n-1}}}(x)\right) \mu(dx), & \mu <\hspace{-0.2cm}< \lambda_{\mathbb{R}^{n-1}}\\
+\infty, & \text{else}
\end{cases}
\end{equation*}
the usual entropy on $\mathbb{R}^{n-1}$, we know that
\begin{equation*}
\frac{d}{dt}W^{2}_{\mathbb{R}^{n-1}}(\rho_{t},\eta)\le\mathcal{S}_{\mathbb{R}^{n-1}}(\eta)-\mathcal{S}_{\mathbb{R}^{n-1}}(\rho_{t}),
\end{equation*}
for all probability measures $\eta$ on $\mathbb{R}^{n-1}$ with finite second moment and entropy \citep[c.f.][]{erbar2010, ambrosio2008gradient}. Next, notice that if $\nu\in\mathcal{P}(\Delta)$ has finite entropy, then

\begin{align*}
\mathcal{S}_{\mathbb{R}^{n-1}}(\ilr_{\#}\nu)&=\int_{\mathbb{R}^{n-1}}\ln\left(\frac{d\ilr_{\#}\nu}{d\lambda_{\mathbb{R}^{n-1}}}(x)\right) \ilr_{\#}\nu(dx)=\int_{\Delta}\ln\left(\frac{d\ilr_{\#}\nu}{d\ilr_{\#}\lambda_{A}}(\ilr(p))\right) \nu(dp)=\mathcal{S}_{A}(\nu).
\end{align*}
Now we easily infer (\ref{eq:evi}) from the previous identity, (\ref{eq:waequi}) and the fact that $\mu_{t}=\ilr^{-1}_{\#}\rho_{t}$.\par 
The observation that the Aitchison diffusion can be interpreted as a Wasserstein gradient flow now provides an immediate way to approximate the laws $\mu_{t}$ be means of a steepest descent algorithm, in perfect analogy to the JKO-scheme invented in \citep{jordan1998variational}.

\begin{thm}[Approximation of the Aitchsion diffusion by JKO-scheme]
For every $t>0$ we denote by
\begin{equation*}
J_{t}(\bar{\mu}|\mu_{0}):=\mathcal{S}(\bar{\mu})-\mathcal{S}(\mu_{0})-\frac{1}{2t}W_{A}^{2}(\bar{\mu},\mu_{0})\quad \text{ and } \quad K_{t}[\rho_{0}]:=\argmin_{\bar{\mu}\in\mathcal{P}_{2}(\Delta)}J_{t}(\bar{\mu}|\mu_{0})
\end{equation*}
Then,
\begin{equation*}
\mu_{t}:=\lim_{n\to\infty}(K_{t/n})^{n}[\mu_{0}],\quad t>0
\end{equation*}
is the law of the Aitchison diffusion (at time $t$) and its $\lambda_{A}$-density solves the heat equation (\ref{eq:Aiheat}).
\end{thm}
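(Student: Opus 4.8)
The plan is to transport the entire statement through the isometric log-ratio transform and deduce it from the classical JKO theorem on $\mathbb{R}^{n-1}$. Recall that $\ilr\colon(\Delta,d_{A})\to(\mathbb{R}^{n-1},\|\cdot\|)$ is a bijective isometry with $\ilr^{-1}_{\#}\lambda_{n-1}=\lambda_{A}$, and that — as already established in the discussion preceding the theorem — one has $W_{A}(\mu,\nu)=W_{\mathbb{R}^{n-1}}(\ilr_{\#}\mu,\ilr_{\#}\nu)$, $\mathcal{S}_{A}(\nu)=\mathcal{S}_{\mathbb{R}^{n-1}}(\ilr_{\#}\nu)$, and $\mu_{t}=\ilr^{-1}_{\#}\rho_{t}$, where $(\rho_{t})_{t\ge0}$ is the heat flow on $\mathbb{R}^{n-1}$ started from $\ilr_{\#}\mu_{0}$. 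In particular $\ilr_{\#}$ is a bijection $\mathcal{P}_{2}(\Delta)\to\mathcal{P}_{2}(\mathbb{R}^{n-1})$ which carries the minimizing-movement functional $J_{t}(\,\cdot\,|\mu_{0})$ to the Euclidean one, say $\widetilde{J}_{t}(\,\cdot\,|\ilr_{\#}\mu_{0})$, on $\mathcal{P}_{2}(\mathbb{R}^{n-1})$.

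First I would record the intertwining identity $\ilr_{\#}\big(K_{t}[\mu_{0}]\big)=\widetilde{K}_{t}\big[\ilr_{\#}\mu_{0}\big]$, where $\widetilde{K}_{t}$ denotes the classical JKO step of step-size $t$ for the Boltzmann entropy on $\mathbb{R}^{n-1}$. Existence and uniqueness of the minimiser defining $K_{t}$ is then inherited from the classical fact (lower semicontinuity of the entropy together with tightness for the direct method, and strict displacement convexity for uniqueness, cf. \citep{jordan1998variational}), and the two operators intertwine simply because $J_{t}$ and $\widetilde{J}_{t}$ agree under $\ilr_{\#}$. Iterating this identity gives $\ilr_{\#}\big((K_{t/n})^{n}[\mu_{0}]\big)=(\widetilde{K}_{t/n})^{n}\big[\ilr_{\#}\mu_{0}\big]$ for every iteration count.

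Next I would invoke the theorem of Jordan, Kinderlehrer and Otto: $(\widetilde{K}_{t/n})^{n}[\ilr_{\#}\mu_{0}]$ converges, weakly and in $W_{\mathbb{R}^{n-1}}$, to the heat flow $\rho_{t}$ on $\mathbb{R}^{n-1}$ as $n\to\infty$. This is the special case of their result in which the confining potential vanishes, so that their Fokker--Planck equation reduces to the pure heat equation; the only genuine point of care is matching the step-size convention of the scheme to the normalisation $L=\tfrac12\sum_{i}Z_{i}^{2}$ of the Aitchison heat semigroup. Since $\ilr^{-1}_{\#}$ is continuous for the weak (hence also the Wasserstein) topology, applying it to both sides yields $(K_{t/n})^{n}[\mu_{0}]\to\ilr^{-1}_{\#}\rho_{t}$, and the limit $\ilr^{-1}_{\#}\rho_{t}$ equals $\mu_{t}$, the law of the Aitchison diffusion at time $t$, by the identity recalled in the first paragraph. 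Finally, since $L$ is the $\ilr$-pullback of the heat operator on $(\mathbb{R}^{n-1},\lambda_{n-1})$ and $\lambda_{A}=\ilr^{-1}_{\#}\lambda_{n-1}$, the $\lambda_{A}$-density of $\ilr^{-1}_{\#}\rho_{t}$ is $\rho_{t}\circ\ilr$, which solves $\partial_{t}\mu=L\mu$ precisely because $\rho_{t}$ solves the heat equation on $\mathbb{R}^{n-1}$; this gives the last assertion, namely that the $\lambda_{A}$-density solves (\ref{eq:Aiheat}).

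The analysis is entirely classical; the work lies in the bookkeeping — verifying that the entropy reference measures transform without a Jacobian factor (they do, because $\lambda_{A}=\ilr^{-1}_{\#}\lambda_{n-1}$ is the pushforward of Lebesgue measure, so Radon--Nikodym derivatives are preserved under $\ilr_{\#}$) and pinning down the time normalisation just mentioned. I expect reconciling the $\tfrac{1}{2t}$-weighting in $J_{t}$ with the $\tfrac12$ in $L$ to be the only place a reader might hesitate; everything else is a direct transcription of the Euclidean JKO theorem through the isometry $\ilr$.
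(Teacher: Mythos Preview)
Your proposal is correct and is exactly the approach the paper takes: the paper does not give a separate proof of this theorem but presents it as an immediate consequence of the identities $W_{A}(\mu,\nu)=W_{\mathbb{R}^{n-1}}(\ilr_{\#}\mu,\ilr_{\#}\nu)$, $\mathcal{S}_{A}(\nu)=\mathcal{S}_{\mathbb{R}^{n-1}}(\ilr_{\#}\nu)$ and $\mu_{t}=\ilr^{-1}_{\#}\rho_{t}$ established in the preceding paragraphs, together with the classical JKO result of \cite{jordan1998variational}. Your write-up in fact supplies more detail than the paper does, spelling out the intertwining $\ilr_{\#}\circ K_{t}=\widetilde{K}_{t}\circ\ilr_{\#}$ and the passage to the limit explicitly.
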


\invisiblesection{Drift diffusion on the Aitchison simplex}  

 \begin{center}
5. D{\footnotesize RIFT DIFFUSIONS ON THE AITCHISON SIMPLEX}
\end{center}

In this final section we are concerned with the influence of drift terms on the Aitchison diffusion. More precisely, we are interested in the long time behaviour of Markov processes which are associated to differential operators of the form $L_{Z_{0}}:=L+Z_{0}$, where $L$ is the generator of the Aitchison diffusion as in (\ref{eq:L}) and $Z_{0}$ is a vector field on $\Delta$. Rather then aiming for maximal generality, our focus is on examples which seem interesting for applications in e.g. mathematical biology and game theory, foremost the case
\begin{equation*}
Z_{0}f(p)=Ap\cdot\nabla^{g}f(p),
\end{equation*}
which corresponds to the stochastic replication equation. Our investigations will be split into two parts. First, we derive structural properties of invariant measures for the diffusion processes in question. Afterwards, we provide quantitative statements about the relaxation to equilibrium by means of Wasserstein contraction estimates. For both concerns we will benefit from a frequent change of perspective between the usual Euclidean picture on one hand and the Aitchison geometry on the other hand. \par

As a first example for this approach, consider

\begin{definition}[Definition](ODE on the Aitchison simplex)\textbf{.}
Let $F\colon\Delta\to\Delta$. By a solution to the ordinary differential equation (ODE) on the Aitchison simplex
\begin{equation}\label{eq:AIODE}
\dot{p}(t)=F(p(t)),\quad p(0)=p\in\Delta,
\end{equation}
we mean a continuous map $[0,\infty)\ni t\mapsto p(t)\in\Delta$, obeying

\begin{equation*}
\langle p(t),q\rangle_{A}=\langle p,q\rangle_{A}+\int_{0}^{t}\langle F(p(s)),q\rangle_{A}ds,\quad t\ge 0
\end{equation*}
for all $q\in\Delta$. 
\end{definition}

\begin{lem}\label{lem:bla}
	Fix $p\in\Delta$. Then $(p(t))$ is a solution to (\ref{eq:AIODE}) if and only if it is a solution to the conventional ODE
	\begin{equation}\label{eq:OODE}
	\dot{p}(t)=g^{-1}(p(t))\clr(F(p(s))),\quad p(0)=p.
	\end{equation}
	
\end{lem}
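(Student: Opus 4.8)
The plan is to translate the coordinate-free definition of a solution to (\ref{eq:AIODE}) into Euclidean coordinates via the $\clr$ transform, using the Riesz representation that implicitly defines the Aitchison gradient and the transformation rule (\ref{eqn: traforule})(ii). The key observation is that the defining integral identity must hold \emph{for all} $q\in\Delta$, and since $\clr\colon\Delta\to H$ is a linear isomorphism, testing against all $q\in\Delta$ is equivalent to testing the corresponding Euclidean identity against all $\clr(q)\in H$, hence against a basis of $H$. So the whole statement should reduce to a statement about absolutely continuous curves in $H\cong\mathbb{R}^{n-1}$.

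Concretely, I would first rewrite $\langle p(t),q\rangle_{A}=(\clr(p(t)),\clr(q))_{H}$ and likewise $\langle F(p(s)),q\rangle_{A}=(\clr(F(p(s))),\clr(q))_{H}$, so that the integral identity in the definition becomes
\begin{equation*}
(\clr(p(t)),h)_{H}=(\clr(p),h)_{H}+\int_{0}^{t}(\clr(F(p(s))),h)_{H}\,ds\qquad\text{for all }h\in H.
\end{equation*}
Because this holds for every $h$ in the finite-dimensional space $H$, and the map $t\mapsto\clr(p(t))$ is continuous (hence $s\mapsto\clr(F(p(s)))$ is continuous, as $F$ and $\clr$ are continuous, so the integrand is bounded on compacts), we may drop the test vector and conclude that $x(t):=\clr(p(t))$ is a $C^{1}$ curve in $H$ with $\dot{x}(t)=\clr(F(p(t)))$ and $x(0)=\clr(p)$. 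The converse direction is immediate: pairing such a $C^{1}$ identity in $H$ against $\clr(q)$ and using (\ref{eqn: traforule})(ii) recovers the defining identity of (\ref{eq:AIODE}).

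It remains to see that the ODE $\dot{x}=\clr(F(\sfm(x)))$ in $H$-coordinates corresponds, back on $\Delta$ in the ambient $\mathbb{R}^{n}$ variables, exactly to (\ref{eq:OODE}). For this I would differentiate the identity $p(t)=\sfm(x(t))$ using the chain rule together with the Jacobian formula (\ref{eq:jac}), $D\sfm=\diag(\sfm)-\sfm\otimes\sfm$; evaluated at $x(t)$ with $p=p(t)=\sfm(x(t))$ this Jacobian is precisely the inverse metric tensor $g^{-1}(p)$ from (\ref{eq:invtens}). Hence $\dot{p}(t)=D\sfm(x(t))\,\dot{x}(t)=g^{-1}(p(t))\,\clr(F(p(t)))$, which is (\ref{eq:OODE}). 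Conversely, given a solution of (\ref{eq:OODE}), one checks it stays on $\Delta$ (the right-hand side lies in $\operatorname{im}g^{-1}(p)=H=T_{p}\Delta$) and that $x(t):=\clr(p(t))$ satisfies the $H$-ODE above, by applying the differential of $\clr$ and using that $D\clr\circ g^{-1}(p)$ acts as the identity on $H$; equivalently one simply notes that $t\mapsto\clr(p(t))$ and $t\mapsto\clr(p)+\int_0^t\clr(F(p(s)))\,ds$ have the same derivative and initial value.

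The main obstacle, such as it is, is purely bookkeeping: one must be careful that $\clr$ and $\sfm$ are mutually inverse only when $\sfm$ is restricted to $H$ (or, on $\mathbb{R}^{n}$, up to adding multiples of $\mathbf{1}_{n}$, which $\sfm$ annihilates and $\clr\circ\sfm$ projects away), so that the identity ``$D\sfm(x)=g^{-1}(\sfm(x))$ and $D\clr(p)|_{H}=g^{-1}(p)^{-1}$ on $H$'' holds in the right sense; and that $F$ takes values in $\Delta$ so $\clr(F(\cdot))\in H$ makes sense. Once the identification of $D\sfm$ with $g^{-1}$ is in hand (already recorded in (\ref{eq:jac})--(\ref{eq:invtens})), the equivalence is a one-line chain-rule computation in each direction.
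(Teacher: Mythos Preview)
Your proposal is correct and follows essentially the same approach as the paper: rewrite the defining identity via the isometry $\langle\cdot,\cdot\rangle_{A}=(\clr(\cdot),\clr(\cdot))_{H}$ to obtain the $H$-valued ODE $\dot{x}(t)=\clr(F(p(t)))$ for $x(t)=\clr(p(t))$, and then pass back and forth between this and (\ref{eq:OODE}) using the chain rule and the Jacobian identity $D\sfm=g^{-1}\circ\sfm$. The only cosmetic difference is that for the converse direction the paper computes $\frac{d}{dt}\ln p_{i}(t)$ explicitly rather than invoking $D\clr\circ g^{-1}(p)=\id_{H}$, which is the same calculation.
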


\begin{proof}
	We set $\tilde{p}(t):=\clr(p(t))$. Recall that for $p,q\in\Delta$ we have $\langle p,q\rangle_{A}=(\clr(p),\clr(q))_{H}$. Thus, $(p(t))$ is a solution to (\ref{eq:AIODE}) iff for all $h\in H$
	\begin{equation}\label{eq:w}
	(\tilde{p}(t),h)_{H}=(\tilde{p},h)_{H}+\int_{0}^{t}(\clr(F(p(s))),h)_{H}ds,
	\end{equation}
	Therefore, if $(p(t))$ solves (\ref{eq:AIODE}), it follows
	\begin{equation*}
	\dot {p}(t)=\frac{d}{dt}\sfm(\tilde{p}(t))=g^{-1}(p(t))\dot{\tilde{p}}(t)=g^{-1}(p(t))\clr(F(p(s)))
	\end{equation*}
	which is (\ref{eq:OODE}). On the other hand, if $(p(t))$ solves (\ref{eq:OODE}), then since
	\begin{equation*}
	\frac{d}{dt}\ln p_{i}(t)=\ln F_{i}(p(t))-\sum_{j=1}^{n}p_{j}(t)\ln(F_{j}(t)),
	\end{equation*}
	we find
	\begin{equation*}
	\dot{\tilde{p}}(t)=\frac{d}{dt}\clr(p(t))=\clr(F(p(t)))
	\end{equation*}
	which, using (\ref{eq:w}), yields that $(p(t))$ solves (\ref{eq:AIODE}).
\end{proof}

\begin{remark}
	Notice that the transformation above has a nice biological interpretation. Think of $F$ in (\ref{eq:AIODE}) as Wrightian fitness. Then the flow $(p(t))$ on the Aitchison simplex driven by $F$ corresponds in the usual Euclidean notation to a replicator equation with fitness landscape $(\ln F_{1},\dots,\ln F_{n})$, i.e. the Malthusian fitness \citep[c.f.][]{wu2013interpretations}. 
\end{remark}
   
In particular, given some payoff matrix $A\in\mathbb{R}^{n\times n}$, if we define
\begin{equation}\label{eq:theta}
\theta\colon\Delta\to\Delta,\quad p\mapsto\theta(p):=\sfm(Ap),
\end{equation}
then
\begin{equation*}
\dot{p}=\theta(p),\quad p(0)=p\in\Delta
\end{equation*}
is equivalent to the replicator equation with linear fitness landscape $p\mapsto Ap$:
\begin{equation}\label{eq:requad}
\dot{p}_{i}(t)=p_{i}(t)\left((Ap(t))_{i}-p(t)\cdot Ap(t)\right),\quad i=1,\dots,n
\end{equation}
provided $p(0)=p\in\Delta$.\par 
Let us try to establish a stochastic analogue of the previous observation. 
\begin{definition}[Definition](SDE on the Aitchison simplex)\textbf{.}
	Let $F\colon\Delta\to\Delta$ and $p\in\Delta$. Given a filtered probability space $\mathbf{\Omega}=(\Omega,\mathcal{F},(\mathcal{F})_{t},\mathbb{P})$ and an Aitchison diffusion $X$ on $\mathbf{\Omega}$, we call an $\mathcal{F}_{t}$-adapted, time-continuous and $\Delta$-valued process $Y$ a solution to the SDE on the Aitchison simplex, 
	\begin{equation}\label{eq:Ailang}  
	dY_{t}=F(Y_{t})dt\oplus dX_{t},\quad Y_{0}=p,
	\end{equation}
	provided that $\mathbb{P}$-a.s.
	\begin{equation}\label{eq:ASDEdef}
	\langle Y_{t},q\rangle_{A}=\langle p,q\rangle_{A}+\int_{0}^{t}\langle F(Y_{s}),q\rangle_{A}ds+\langle X_{t},q\rangle_{A},\quad t\ge 0
	\end{equation}
	for all $q\in\Delta$. 
\end{definition}
 
The definition can be extended to random initial data in the usual way. Our next result is the stochastic counterpart of Lemma \ref{lem:bla}. 
\begin{thm}\label{lem:lem}
	Let  $\mathbf{\Omega}=(\Omega,\mathcal{F},(\mathcal{F})_{t},\mathbb{P})$ be a filtered probability space. If $Y$ is a solution to (\ref{eq:Ailang}) on $\mathbf{\Omega}$, then there exists an $\mathbb{R}^{n}$-valued standard Brownian motion $B$ on $\mathbf{\Omega}$, such that $Y$ solves 
	\begin{equation}\label{eq:Ailangeu}
	dY_{t}=g^{-1}(Y_{t})\clr(F(Y_{t}))dt+g^{-1}(Y_{t})\circ dB_{t}.
	\end{equation}
	Vice versa, if $Y$ solves (\ref{eq:Ailangeu}), then there exists an Aitchison diffusion $X$ on $\mathbf{\Omega}$ such that $Y$ solves (\ref{eq:Ailang}).
\end{thm}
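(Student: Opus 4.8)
The plan is to transport the whole identity (\ref{eq:ASDEdef}) over to the Hilbert space $H$ by applying the $\clr$ transform, where both (\ref{eq:Ailang}) and (\ref{eq:Ailangeu}) become ordinary Euclidean Stratonovich SDEs, and then to translate back via the Stratonovich chain rule. The only two structural facts I would use are the ones already exploited in Section 3: first, $D\sfm=\diag(\sfm)-\sfm\otimes\sfm=g^{-1}\circ\sfm$ (compare (\ref{eq:jac}) and (\ref{eq:invtens})); second, each row of $g^{-1}(p)$ sums to zero, i.e. $g^{-1}(p)\mathbf{1}_n=0$, equivalently $g^{-1}(p)=g^{-1}(p)\Pi$ where $\Pi:=\Psi^{\top}\Psi=\operatorname{Id}_n-\tfrac1n\mathbf{1}_n\otimes\mathbf{1}_n$ is the orthogonal projector of $\mathbb{R}^n$ onto $H$ (cf. (\ref{eq:contrastprop})). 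The starting point is the elementary dictionary: since $\clr\colon\Delta\to H$ is a bijection and $\langle p,q\rangle_A=(\clr(p),\clr(q))_H$ by (\ref{eqn: traforule}), testing (\ref{eq:ASDEdef}) against all $q\in\Delta$ is equivalent to the $H$-valued identity $\clr(Y_t)=\clr(p)+\int_0^t\clr(F(Y_s))\,ds+\clr(X_t)$ for all $t\ge0$; evaluating at $t=0$ forces $X_0=e$, and by Theorem \ref{thm: Aidiff} the process $W_t:=\clr(X_t)$ is then an $H$-valued Brownian motion, equivalently $\ilr(X_t)=\Psi W_t$ is a standard $(n-1)$-dimensional Brownian motion adapted to $(\mathcal{F}_t)$.

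For the forward implication I would apply the Stratonovich chain rule to $Y_t=\sfm(\clr(Y_t))$ in the $H$-identity above; since $D\sfm=g^{-1}\circ\sfm$ and the finite-variation term passes through unchanged, this yields $dY_t=g^{-1}(Y_t)\clr(F(Y_t))\,dt+g^{-1}(Y_t)\circ dW_t$. To upgrade $W$ to a genuine $\mathbb{R}^n$-valued standard Brownian motion one enlarges $\mathbf{\Omega}$ (if necessary) by an independent one-dimensional Brownian motion $\gamma$ and sets $B_t:=W_t+n^{-1/2}\mathbf{1}_n\gamma_t$, so that $B$ is standard and $\Pi B_t=W_t$; using $g^{-1}(Y_t)=g^{-1}(Y_t)\Pi$ one replaces $\circ dW_t$ by $\circ dB_t$ and arrives at (\ref{eq:Ailangeu}). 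For the converse, given a solution to (\ref{eq:Ailangeu}) with an $\mathbb{R}^n$-standard Brownian motion $B$, I would set $X_t:=\sfm(\Pi B_t)=\ilr^{-1}(\Psi B_t)$; by (\ref{eq:contrastprop}) the process $\Psi B$ is a standard $(n-1)$-dimensional Brownian motion, so $X$ is an Aitchison diffusion started at $e$. Applying the chain rule componentwise to $\clr_i(Y_t)$ and using the identity $\nabla\clr_i(p)^{\top}g^{-1}(p)=\epsilon_i-n^{-1}\mathbf{1}_n$ — the $i$-th row of $\Pi$, which follows at once from $\sum_k p_k\,\partial_k\clr_i(p)=0$ — gives $d\clr(Y_t)=\clr(F(Y_t))\,dt+d(\Pi B_t)=\clr(F(Y_t))\,dt+d\clr(X_t)$, i.e. the $H$-identity, hence (\ref{eq:ASDEdef}).

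I expect the only genuinely delicate point to be the bookkeeping between the $(n-1)$-dimensional effective noise and the $n$-dimensional driving Brownian motion: because $g^{-1}(p)$ annihilates the direction $\mathbf{1}_n$, the $\mathbf{1}_n$-component of $B$ is immaterial, so in one direction it must be manufactured by enlarging the probability space and in the other discarded via $\Pi=\Psi^{\top}\Psi$. Everything else is the Stratonovich chain rule together with $D\sfm=g^{-1}\circ\sfm$ and $g^{-1}(p)\mathbf{1}_n=0$; there are no integrability obstructions, since $Y$ is $\Delta$-valued by hypothesis and $g^{-1}$ (as well as $\clr$) is smooth and bounded on $\Delta$, and the trajectory never approaches the boundary.
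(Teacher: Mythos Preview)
Your proposal is correct and follows essentially the same route as the paper's proof: both transport the identity to $H$ via $\clr$, apply the Stratonovich chain rule through $D\sfm=g^{-1}\circ\sfm$, and handle the redundant $\mathbf{1}_n$-direction by adjoining an independent one-dimensional Brownian motion in the forward direction and projecting it away in the converse. The only cosmetic differences are that the paper computes $d\ln Y_t^i$ directly in the converse step (rather than invoking your identity $\nabla\clr_i(p)^{\top}g^{-1}(p)=\epsilon_i-n^{-1}\mathbf{1}_n$), and that it writes $X=\sfm(B)$ instead of $X=\sfm(\Pi B)$, which coincide since $\sfm$ is invariant under shifts by multiples of $\mathbf{1}_n$.
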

 
\begin{proof}
	Let $Y$ be a solution to (\ref{eq:Ailang}). Since $X$ is a Brownian motion on $(\Delta,\langle\cdot,\cdot\rangle_{A})$, we have
	\begin{equation}\label{eq:Aidiffcov}
	\mathbb{E}[\langle X_{t},p\rangle_{A}\langle X_{s},q\rangle_{A}]=\langle p,q\rangle_{A}s\wedge t,
	\end{equation}
	whence, for all $g,h\in H$
	\begin{equation*}
	\mathbb{E}[ (\clr(X_{t}),g)_{H}(\clr( X_{s}),h)_{H}]=(g,h)_{H} s\wedge t.
	\end{equation*}
	Therefore $\tilde{B}:=\clr(X)$ is a BM on $H$. Now take another one-dimensional Brownian motion $W$, independent from $\tilde{B}$ and define $B:=\tilde{B}+\frac{1}{\sqrt{n}}W\mathbf{1}$. Then, because
	\begin{align*}
	&\mathbb{E}[ (B_{t},x)(B_{s},y)]\\
	=&\mathbb{E}\left[ (\tilde{B}_{t},x)(\tilde{B}_{s},y)+n^{-\frac{1}{2}}(\tilde{B}_{t},x)(\mathbf{1},y)W_{s}+n^{-\frac{1}{2}}(\tilde{B}_{s},y)(\mathbf{1},x)W_{t}+n^{-1}W_{s}W_{t}(\mathbf{1},x)(\mathbf{1},y)\right]\\
	=&(\operatorname{pr}_{H}x,\operatorname{pr}_{H}y)_{H} s\wedge t+\frac{1}{n}(\mathbf{1},x)(\mathbf{1},y)s\wedge t=(x,y)s\wedge t,
	\end{align*}
	we know $B$ is a standard BM on $\mathbb{R}^{n}$. Next, set $\tilde{Y}:=\clr(Y)$ and $h:=\clr(p)$. By (\ref{eq:ASDEdef}),
	\begin{equation}
	(\tilde{Y}_{t},h)_{H}=(\tilde{Y}_{0},h)_{H}-\int_{0}^{t}(\clr(F(Y_{s})),h)_{H}ds+(B_{t}-n^{-\frac{1}{2}}W_{t}\mathbf{1},h)_{H}.
	\end{equation}
	Now applying the Stratonovich chain rule to $Y=\sfm(\tilde{Y})$, we recover (\ref{eq:Ailangeu}).\par 
	If on the other hand $Y$ solves (\ref{eq:Ailangeu}), then 
	\begin{align*} 
	d\ln Y^{i}_{t}&=\frac{1}{Y^{i}_{t}}\left((g^{-1}(Y_{t})\clr(F(Y_{t})))_{i}dt+(g^{-1}(Y_{t})\circ dB_{t})_{i}\right)\\
	&=-\left(\clr_{i}(F(Y_{t}))-\sum_{k=1}^{n}Y^{k}_{t}\clr_{k}(F(Y_{t}))\right)dt+dB^{i}_{t}-\sum_{k=1}^{n}Y^{k}_{t}\circ dB^{k}_{t},
	\end{align*}
	and therefore
	\begin{equation*}
	d\tilde{Y}^{i}_{t}:=d\clr_{i}(Y_{t})=-\clr_{i}(F(Y_{t}))dt+dB^{i}_{t}-\frac{1}{n}\sum_{k=1}^{n}dB^{k}_{t}.
	\end{equation*}
	The noise term on the right hand side of the previous equation is a Brownian motion on $H$. Thus, testing $\tilde{Y}$ with $h\in H$ and transforming back to the Aitchison simplex, we see $Y$ obeys (\ref{eq:Ailang}) with $X=\sfm(B)$.  
\end{proof}
      
Choosing again $F=\theta$ in (\ref{eq:Ailangeu}), leads to
 \begin{align}\label{eq:ka}
 dY_{t}=g^{-1}(Y_{t})AY_{t}dt+g^{-1}(Y_{t})\circ dB_{t},
 \end{align}
which is the stochastic replicator equation of Fudenberg and Harris as in (\ref{eq:fudsde}) with $\sigma_{i}=1$ for $i=1,\dots,n$. We will here consider this constant coefficient case only. Yet, our methods could be easily extended to different $\sigma_{i}$, too. In this case, one would need to incorporate a covariance structure in the definition of $X$, by imposing e.g.

\begin{equation}\label{eq:Aidiffcov2}
\mathbb{E}[\langle X_{t},p\rangle_{A}\langle X_{s},q\rangle_{A}]=(\clr( p),\operatorname{diag}(\sigma_{1},\dots,\sigma_{n})\clr(q))s\wedge t.
\end{equation}

There is another interesting and natural choice for $F$ in (\ref{eq:Ailang}). Namely, take a potential ${V\in C^{1}(\Delta)}$ and consider the gradient drift $F=\ominus\nabla^{A}V$. The associated evolution can be interpreted as a Langevin dynamic on the Aitchison simplex:
\begin{equation}\label{eq:Ailang88}
dY_{t}=\ominus\nabla^{A}V(Y_{t})dt\oplus dX_{t},
\end{equation}
which in the standard Euclidean picture corresponds to
\begin{equation}\label{eq:Ailangeu888}
dY_{t}=-\nabla^{G}V(Y_{t})dt+g^{-1}(Y_{t})\circ dB_{t}.
\end{equation}
where $\nabla^{G}:=g^{-1}\nabla^{g}$. We will come back to those gradient drift diffusions in due course. \par 
In preparation for the following two subsections, we need to introduce a classical notion of evolutionary game theory. As for the deterministic replicator equation (\ref{eq:requad}), a key concept in the analysis of the long term behaviour of (\ref{eq:ka}) are Price and Maynard Smith's \textit{evolutionary stable strategies} (ESS) \citep{smith1973logic}, which specify Nash equilibria that are non-invadable by initially rare alternative strategies.

\begin{definition}[Definition](Evolutionary stable strategy)\textbf{.} Let $A=(a_{ij})\in\mathbb{R}^{n\times n}$ be a payoff matrix. We call $p^{*}\in\bar{\Delta}$ an ESS for $A$ provided
	\begin{enumerate}
		\item \textit{equilibrium condition}
		\begin{equation*}
			p\cdot Ap^{*}\le p^{*}\cdot Ap^{*}, \text{ for all } p\in\bar{\Delta}
		\end{equation*} 
		\item \textit{stability condition}
		\begin{equation}\label{eq:stabcond}
		\text{if }p\ne p^{*}\text{ and } p\cdot Ap^{*}=p^{*}\cdot Ap^{*}\text{ then } p\cdot Ap<p^{*}\cdot Ap.
		\end{equation}
	\end{enumerate}
	The first condition above means that $p^{*}$ is a \textit{Nash equilibrium} (NE).
\end{definition}   
   
It is well-known that if $p^{*}\in\Delta$ is an interior ESS, then it must be unique and moreover $A$ is conditionally negative definite \citep[c.f.][]{hofbauer1998evolutionary}:

\begin{definition}
	We call $A\in\mathbb{R}^{n\times n}$ \textit{conditionally negative semi-definite}, whenever,
	\begin{equation*}
	h\cdot Ah\le0,\text{ for all } h\in H.
	\end{equation*} 
	If the previous inequality is strict for all $h\in H\setminus\lbrace 0\rbrace$, we say $A$ is \textit{conditionally negative definite}. We denote by $\Gamma^{\le}=\Gamma^{\le}_{n}$ and $\Gamma^{<}=\Gamma^{<}_{n}$ the sets of all $n\times n$ conditionally negative semi-definite and conditionally negative definite matrices, respectively. 
\end{definition}  
If on the other hand, $A$ is conditionally negative definite, then $A$ has a unique ESS, possibly lying on the boundary.\par 
Finally notice \citep[c.f.][]{imhof2005long}, if $A\in\Gamma^{<}$ and we introduce the Rayleigh-quotient
\begin{equation*}
\lambda:=-\max_{h\in H\setminus\lbrace 0\rbrace}\frac{h\cdot Ah}{\|h\|^{2}},
\end{equation*}
then $\lambda>0$ and for all $h\in H$
	\begin{equation}\label{eq:conneg}
	h\cdot Ah\le-\lambda\|h\|^{2}.
	\end{equation}
The parameter $\lambda$ will play a crucial role in our final section on contraction estimates for replicator dynamics.

\addcontentsline{toc}{subsection}{5.1.\ \ Fokker-Planck equation and invariant measures for stochastic replicator dynamics }   

\bigskip

\textbf{5.1. Fokker-Planck equation and invariant measures for stochastic replicator dynamics}  
\smallskip

\noindent  
In \citep{imhof2005long} Imhof studies the long-run behavior of stochastic replicator dynamics, providing in particular sufficient conditions for the existence of invariant probability measures. The authors of \citep{hofbauer2009time} investigate among others ergodicity properties of (\ref{eq:ka}) and their consequences. Our aim for this subsection is to complement those results by a classical perspective on invariant measures, namely via the Fokker-Planck equation.
\par 
 
Mainly for notational convenience, and in this subsection only, $X$ will generically be an Aitchsion diffusion with covariance structure 
\begin{equation}\label{eq:Aidiffcov}
\mathbb{E}[\langle X_{t},p\rangle_{A}\langle X_{s},q\rangle_{A}]=\sqrt{2}\langle p,q\rangle_{A}s\wedge t,
\end{equation}
which in the Fudenberg-Harris model corresponds to $\sigma_{i}=\sqrt{2},\ i=1,\dots n$, or equivalently to (\ref{eq:ka}) if the Brownian motion obeys $\langle B^{i},B^{j}\rangle_{t}=\sqrt{2}\delta_{ij}t$. We denote by $(P_{t})$ the Markov semi-group associated to (\ref{eq:ka}), which we also refer to as the replicator semigroup. For the corresponding family of Markov transition kernels we write $(\pi_{t})$. Of course, $P_{t}$ depends upon the choice of a payoff matrix $A\in\mathbb{R}^{n\times n}$ and so does the corresponding generator $L_{A}:=L'+Z_{A}$, where $L'=2L$ with $L$ as in (\ref{eq:L}) and for $f\colon\Delta\to\mathbb{R}$ sufficiently regular, 
   \begin{equation*}
   Z_{A}f(p):=Ap\cdot\nabla^{g}f(p).
   \end{equation*}
As usual, we call a $\sigma$-finite (but possibly non-finite) measure $\mu$ on $\Delta$ invariant for the replicator semigroup, if for all positive, bounded and measurable functions $f\colon\Delta\to\mathbb{R}$
\begin{equation}
\int_{\Delta}P_{t}fd\mu=\int_{\Delta}fd\mu, \quad t\ge0.
\end{equation}

Let us start with the following simple, yet important observation which is a direct consequence of the isometry between $\Delta$ and $\mathbb{R}^{n-1}$ and Theorem \ref{lem:lem}.

\begin{lem}[Change of coordinates formula]\label{lem:coc}
	Denote by 
	\begin{equation}\label{eq:thetahat}
	\hat{\theta}(x):=\ilr\circ\theta\circ\ilr^{-1}(x)=\Psi A\sfm(\Psi^{\top}x)
	\end{equation}
	and consider the $\mathbb{R}^{n-1}$-valued diffusion $\hat{Y}$ given by 
	\begin{equation}\label{eq:hatdif}
	d\hat{Y}_{t}=\hat{\theta}(\hat{Y}_{t})dt+\sqrt{2}dB_{t},
	\end{equation} 	
	with corresponding generator $\hat{L}_{A}:=\Delta+\hat{\theta}\cdot\nabla$. The Markov semigroup $(\hat{P}_{t})$ given through (\ref{eq:hatdif}) and the replicator semigroup are linked by means of the change of coordinates
	\begin{equation}\label{eq:semrel}
	P_{t}f(p)=\hat{P}_{t}(f\circ\ilr^{-1})(\ilr(p)).
	\end{equation}  
\end{lem}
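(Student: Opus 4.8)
The plan is to derive (\ref{eq:semrel}) directly from Theorem \ref{lem:lem} by identifying the Euclidean SDE that governs $\hat{Y}:=\ilr(Y)$, where $Y$ solves the stochastic replicator equation (\ref{eq:ka}). First I would recall that (\ref{eq:ka}) is exactly (\ref{eq:Ailangeu}) with $F=\theta$, so by the second half of Theorem \ref{lem:lem}, $Y$ is a solution to the Aitchison-simplex SDE (\ref{eq:Ailang}) with the same $F=\theta$ and an appropriate Aitchison diffusion $X$. Conversely, starting from the first half of that theorem applied to $F=\theta$, one sees that the processes $\clr(Y)$ and $\clr(X)$ satisfy, in $H$, the identity $d\clr_i(Y_t)=-\clr_i(\theta(Y_t))\,dt+d\tilde B^i_t$ with $\tilde B=\clr(X)$ a Brownian motion on $H$ (with the $\sqrt 2$-scaling fixed by the covariance convention (\ref{eq:Aidiffcov})). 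Note the sign: with $F=\theta$ we have $\clr(\theta(p))=\clr(\sfm(Ap))=\mathrm{pr}_H(Ap)$, so that $-\clr(\theta(Y_t))$ in the $\clr$-picture corresponds after the sign bookkeeping in the proof of Theorem \ref{lem:lem} to the drift $+Ap$ in (\ref{eq:ka}); this is the one point where I would be careful to keep conventions consistent.

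Next I would push everything through the isometry $\ilr=\Psi\circ\clr$. Applying $\Psi$ to the $H$-valued equation for $\clr(Y)$ and using $\Psi\Psi^{\top}=\operatorname{Id}_{n-1}$ from (\ref{eq:contrastprop}), together with the fact that $\Psi$ maps a Brownian motion on $H$ (scaled by $\sqrt 2$) to a standard $\mathbb{R}^{n-1}$-Brownian motion scaled by $\sqrt 2$, I obtain that $\hat Y_t:=\ilr(Y_t)$ solves
\begin{equation*}
d\hat Y_t=\Psi\,\mathrm{pr}_H\big(A\sfm(\Psi^{\top}\hat Y_t)\big)\,dt+\sqrt 2\,dB_t.
\end{equation*}
Since $\Psi\mathbf 1_n=0$, one has $\Psi\,\mathrm{pr}_H=\Psi$, so the drift is precisely $\Psi A\sfm(\Psi^{\top}\hat Y_t)=\hat\theta(\hat Y_t)$ as defined in (\ref{eq:thetahat}). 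Hence $\hat Y$ solves (\ref{eq:hatdif}), and its generator is $\hat L_A=\Delta+\hat\theta\cdot\nabla$ (the Laplacian, with no extra factor, because the $\sqrt 2$ diffusion coefficient yields $\tfrac12(\sqrt2)^2=1$ in front of each second derivative).

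Finally I would conclude (\ref{eq:semrel}) by the standard relation between the semigroups of a process and its image under a deterministic bijection: for bounded measurable $f$ on $\Delta$ and $p\in\Delta$, writing $x=\ilr(p)$,
\begin{equation*}
P_tf(p)=\mathbb E_p[f(Y_t)]=\mathbb E_x\big[(f\circ\ilr^{-1})(\hat Y_t)\big]=\hat P_t(f\circ\ilr^{-1})(x)=\hat P_t(f\circ\ilr^{-1})(\ilr(p)),
\end{equation*}
where the second equality uses $Y_t=\ilr^{-1}(\hat Y_t)$ and $Y_0=p\iff\hat Y_0=x$. I expect the main obstacle to be purely bookkeeping rather than conceptual: tracking the sign of the drift through the $\clr$/$\sfm$ transformations in Theorem \ref{lem:lem}, and making sure the $\sqrt 2$-covariance normalization in (\ref{eq:Aidiffcov}) is transported correctly so that $\hat L_A$ comes out with the plain Laplacian and drift $\hat\theta$ and not some rescaled variant. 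Everything else — existence and uniqueness of $\hat Y$, measurability, the identification $\ilr^{-1}_\#$ of laws — is immediate from the Lipschitz estimates already established for the replicator coefficients and the fact that $\ilr$ is a smooth bijection.
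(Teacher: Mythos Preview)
Your proposal is correct and follows exactly the route the paper indicates: the paper does not spell out a proof of this lemma, stating only that it is a direct consequence of the isometry $\ilr\colon\Delta\to\mathbb{R}^{n-1}$ and Theorem~\ref{lem:lem}, and your argument fills in precisely those details. Your caution about the sign of the drift is well placed (the displayed formulas in the proof of Theorem~\ref{lem:lem} in fact carry a sign typo), and you correctly arrive at the drift $+\hat\theta$.
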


Hence, the analysis of $(P_{t})$ eventually boils down to the analysis of the drifted Brownian motion $\hat{Y}$ and its semigroup $(\hat{P}_{t})$. Now recall that the Markov semigroup $(P_{t})$ is regular provided for all $t>0$, the probability measures $\pi_{t}(p,\cdot)$ are mutually equivalent for all $p\in\Delta$ and $t>0$. Then, as first consequence of Lemma \ref{lem:coc}, we obtain

\begin{prop}\label{prop:p}
	The replicator semigroup is regular.
\end{prop}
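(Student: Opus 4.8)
The plan is to reduce everything to the Euclidean picture via Lemma~\ref{lem:coc} and then invoke classical regularity theory for non-degenerate diffusions. Since $P_t f(p) = \hat P_t(f\circ\ilr^{-1})(\ilr(p))$ and $\ilr\colon\Delta\to\mathbb{R}^{n-1}$ is a diffeomorphism, the transition kernels satisfy $\pi_t(p,\cdot) = \ilr^{-1}_{\#}\hat\pi_t(\ilr(p),\cdot)$, where $\hat\pi_t$ denotes the transition kernel of $\hat Y$ from \eqref{eq:hatdif}. Thus mutual equivalence of the $\pi_t(p,\cdot)$ across $p\in\Delta$ is equivalent to mutual equivalence of the $\hat\pi_t(x,\cdot)$ across $x\in\mathbb{R}^{n-1}$, so it suffices to prove the latter.

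First I would record that the SDE \eqref{eq:hatdif} has smooth drift $\hat\theta(x) = \Psi A\sfm(\Psi^\top x)$ — smooth because $\sfm$ is smooth and $\hat\theta$ is moreover bounded, since $\sfm$ takes values in the bounded set $\Delta$ — and constant, non-degenerate diffusion coefficient $\sqrt 2\,\mathrm{Id}_{n-1}$. Hence $\hat Y$ is a uniformly elliptic diffusion on $\mathbb{R}^{n-1}$ with smooth coefficients, so by classical parabolic PDE theory (e.g.\ the existence of a smooth, strictly positive fundamental solution to the associated Kolmogorov equation $\partial_t u = \hat L_A u$, cf.\ Friedman or Aronson's Gaussian bounds) the kernel $\hat\pi_t(x,dy)$ is, for every $t>0$ and every $x$, absolutely continuous with respect to Lebesgue measure $\lambda_{n-1}$ with a density $\hat p_t(x,y)$ that is strictly positive for all $y\in\mathbb{R}^{n-1}$. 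In particular every $\hat\pi_t(x,\cdot)$ is equivalent to $\lambda_{n-1}$, hence any two of them are mutually equivalent, which is exactly what we need.

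Transporting back, $\pi_t(p,\cdot)$ has a strictly positive density with respect to $\ilr^{-1}_{\#}\lambda_{n-1} = \lambda_A$ (up to the constant from the remark on $\lambda_A$), so the $\pi_t(p,\cdot)$, $p\in\Delta$, are mutually equivalent; this is the definition of regularity of $(P_t)$. I would phrase the argument so that the only external input is the standard fact that a uniformly elliptic diffusion with bounded smooth coefficients has a strictly positive transition density, which I would cite rather than prove.

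The one point requiring a little care — and the only plausible obstacle — is the global-in-space behaviour: strict positivity of $\hat p_t(x,y)$ for \emph{all} $y$, not merely local positivity near $x$. This is guaranteed here because the drift $\hat\theta$ is bounded (so no explosion and no loss of mass, and Aronson-type two-sided Gaussian bounds apply globally), which is why I emphasise boundedness of $\sfm$ above; alternatively one can get strict positivity everywhere from the support theorem together with controllability of the control system $\dot x = \hat\theta(x) + \sqrt 2\,u$, which is immediate since the control enters in full rank. Either route closes the gap cleanly, and no genuinely hard estimate is needed.
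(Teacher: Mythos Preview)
Your proof is correct and follows essentially the same route as the paper: both reduce to the Euclidean diffusion $\hat Y$ via Lemma~\ref{lem:coc}, observe that the drift $\hat\theta$ is bounded (the paper also records Lipschitz continuity), and then invoke a classical result to conclude regularity of $(\hat P_t)$, which is inherited by $(P_t)$. The only difference is the choice of black box: the paper cites \cite[Prop.~7.20]{da2006introduction} to get strong Feller plus irreducibility (hence regularity), whereas you argue directly via strict positivity of the transition density from Aronson-type Gaussian bounds---both are standard and either closes the argument.
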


\begin{proof}
	Notice that, $\hat{\theta}$ is bounded and Lipschitz. Indeed, we have
	\begin{equation*}
	\|\hat{\theta}(x)-\hat{\theta}(y)\|\le\|A\|\|\Psi\|^{2}\|x-y\|,
	\end{equation*}
	and
	\begin{equation}
	\|\hat{\theta}(x)\|\le \|\Psi\|\| A\|
	\end{equation}
	for all $x,y\in\mathbb{R}^{n-1}$. Therefore, $(\hat{P}_{t})$ is strongly Feller and irreducible \citep[Prop. 7.20]{da2006introduction}, hence regular. By Lemma \ref{lem:coc} the replicator semigroup $(P_{t})$ inherits the regularity from $(\hat{P}_{t})$.
\end{proof}

By \citep[Thm 4.2.1]{daprato_zabczyk_1996}, regularity of a semigroup on the other hand directly entails 
\begin{cor}[Uniqueness of invariant probability measures and mixing property]
	 Let ${\mu\in\mathcal{P}(\Delta)}$ be invariant for $(P_{t})$. Then $\mu$ is the only invariant measure and moreover the replicator semigroup is strongly mixing for $\mu$, i.e. for every $p\in\Delta$ and measurable $C\subset\Delta$
	\begin{equation}
	\lim_{t\to\infty}\pi_{t}(p,C)=\mu(C).
	\end{equation}
\end{cor}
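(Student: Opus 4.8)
The plan is to transport the statement to $\mathbb{R}^{n-1}$ via the isometric change of coordinates of Lemma~\ref{lem:coc} and then invoke Doob's theorem in the form \citep[Thm 4.2.1]{daprato_zabczyk_1996}. First I would record two elementary transfer facts. Since $\ilr$ is a homeomorphism of $\Delta$ onto $\mathbb{R}^{n-1}$ (Lemma~\ref{lem:topequiv} together with the transformation rules (\ref{eqn: traforule})), the pushforward $\ilr_{\#}\colon\mathcal{P}(\Delta)\to\mathcal{P}(\mathbb{R}^{n-1})$ is a bijection with inverse $\ilr^{-1}_{\#}$, and $\ilr$ maps Borel sets to Borel sets. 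Moreover, reading the semigroup relation (\ref{eq:semrel}) at the level of kernels gives $\hat{\pi}_{t}(\ilr(p),\cdot)=\ilr_{\#}\pi_{t}(p,\cdot)$, hence $\pi_{t}(p,C)=\hat{\pi}_{t}(\ilr(p),\ilr(C))$ for every Borel $C\subset\Delta$; and testing (\ref{eq:semrel}) against $\mu$ shows that $\mu\in\mathcal{P}(\Delta)$ is invariant for $(P_{t})$ if and only if $\hat{\mu}:=\ilr_{\#}\mu$ is invariant for $(\hat{P}_{t})$.

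Second, I would recall from the proof of Proposition~\ref{prop:p} that $(\hat{P}_{t})$ is strongly Feller and irreducible, hence regular. Given the invariant probability measure $\hat{\mu}$, Doob's theorem \citep[Thm 4.2.1]{daprato_zabczyk_1996} then yields at once that $\hat{\mu}$ is the \emph{unique} invariant probability measure of $(\hat{P}_{t})$ and that $\lim_{t\to\infty}\hat{\pi}_{t}(x,D)=\hat{\mu}(D)$ for every $x\in\mathbb{R}^{n-1}$ and every Borel $D\subset\mathbb{R}^{n-1}$ (in fact convergence holds in total variation).

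Finally I would transfer back. If $\nu\in\mathcal{P}(\Delta)$ is invariant for $(P_{t})$, then $\ilr_{\#}\nu$ is invariant for $(\hat{P}_{t})$, hence equals $\hat{\mu}$ by uniqueness, hence $\nu=\mu$; this is the first assertion. For the mixing statement, fix $p\in\Delta$ and a Borel set $C\subset\Delta$, apply the kernel identity and then Doob's convergence with $x=\ilr(p)$, $D=\ilr(C)$, and use $\hat{\mu}(\ilr(C))=\mu(C)$ to obtain $\pi_{t}(p,C)=\hat{\pi}_{t}(\ilr(p),\ilr(C))\to\hat{\mu}(\ilr(C))=\mu(C)$, which is the claim.

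I do not expect a genuine obstacle: the entire analytic content is already packaged in Proposition~\ref{prop:p} and the cited Doob-type theorem, and the only points requiring (minor) care are the measurability of $\ilr$ on Borel sets and the compatibility of pushforward with the two semigroups, both of which are immediate consequences of $\ilr$ being a homeomorphism implementing the isometry $\Delta\cong\mathbb{R}^{n-1}$. If one wanted to state the conclusion in total variation rather than setwise, that too would follow directly, since pushforward under a bijection preserves the total variation norm.
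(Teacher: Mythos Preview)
Your proof is correct and essentially follows the paper's approach: the paper simply notes that the Corollary follows at once from Proposition~\ref{prop:p} via Doob's theorem \citep[Thm 4.2.1]{daprato_zabczyk_1996}. The only cosmetic difference is that you apply Doob to $(\hat{P}_{t})$ on $\mathbb{R}^{n-1}$ and then transport uniqueness and mixing back through $\ilr$, whereas the paper applies Doob directly to $(P_{t})$ on $\Delta$, having already recorded in Proposition~\ref{prop:p} that $(P_{t})$ itself is regular; both routes rest on the same ingredients and neither offers any additional leverage.
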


Our next aim is a description of invariant measures by means of a stationary Fokker-Planck equation. As a preparatory result we deliver following statement on densities of invariant measures. 

\begin{prop}[Existence of smooth $\lambda_{A}$-densities]
	If $\mu$ is an invariant measure for the replicator semigroup, then $\mu$ has a $C^{\infty}$-smooth density with respect to the Aitchsion measure $\lambda_{A}$.
\end{prop}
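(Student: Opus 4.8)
The plan is to transport the question to $\mathbb{R}^{n-1}$ by means of the change-of-coordinates formula of Lemma~\ref{lem:coc} and there invoke elliptic regularity. Set $\hat{\mu}:=\ilr_{\#}\mu$. Since $\ilr\colon\Delta\to\mathbb{R}^{n-1}$ is a $C^{\infty}$-diffeomorphism (both $\clr$ and $\sfm$ being smooth, even real-analytic) and, by construction of $\lambda_{A}$, we have $\ilr_{\#}\lambda_{A}=\lambda_{n-1}$ up to a multiplicative constant, the asserted smoothness of $\tfrac{d\mu}{d\lambda_{A}}$ on $\Delta$ is equivalent to $\hat{\mu}$ possessing a $C^{\infty}$-density with respect to Lebesgue measure on $\mathbb{R}^{n-1}$: composing a smooth function with the diffeomorphism $\ilr$ keeps it smooth. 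Moreover, \eqref{eq:semrel} shows at once that $\hat{\mu}$ is invariant for the semigroup $(\hat{P}_{t})$ generated by $\hat{L}_{A}=\Delta+\hat{\theta}\cdot\nabla$ from \eqref{eq:hatdif}, and from \eqref{eq:thetahat} together with the smoothness of $\sfm$ one reads off that $\hat{\theta}$ is $C^{\infty}$ with $\hat{\theta}$ and all its derivatives bounded.

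Next I would derive the stationary Fokker--Planck equation for $\hat{\mu}$. For $\varphi\in C_{c}^{\infty}(\mathbb{R}^{n-1})$ one has $t^{-1}(\hat{P}_{t}\varphi-\varphi)\to\hat{L}_{A}\varphi$ uniformly on $\mathbb{R}^{n-1}$ as $t\downarrow 0$; this is a routine consequence of It\^o's formula, using that the coefficients of $\hat{L}_{A}$ and the derivatives of $\varphi$ are bounded with compact support. Combining this uniform convergence with the invariance identity $\int\hat{P}_{t}\varphi\, d\hat{\mu}=\int\varphi\, d\hat{\mu}$ and dominated convergence (legitimate because $\hat{\mu}$ is a Radon, hence locally finite, measure and $\varphi$ is compactly supported) gives $\int_{\mathbb{R}^{n-1}}\hat{L}_{A}\varphi\, d\hat{\mu}=0$ for every test function $\varphi$. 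In other words, $\hat{\mu}$ solves the adjoint equation $\hat{L}_{A}^{*}\hat{\mu}=\Delta\hat{\mu}-\operatorname{div}(\hat{\theta}\,\hat{\mu})=0$ in the sense of distributions. Alternatively, one may simply quote the regularity theory for stationary Fokker--Planck equations with locally bounded drift and nondegenerate diffusion (e.g.\ Bogachev--Krylov--R\"ockner), which covers exactly this situation.

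Finally I would invoke hypoellipticity: the operator $\hat{L}_{A}^{*}$ has $C^{\infty}$ coefficients and its second-order part is the Laplacian, so it is uniformly elliptic, hence hypoelliptic. Therefore the distribution $\hat{\mu}$, being a locally finite measure annihilated by $\hat{L}_{A}^{*}$, is represented by a function $\hat{h}\in C^{\infty}(\mathbb{R}^{n-1})$ (Weyl's lemma for a variable-coefficient elliptic operator), and $\hat{h}\ge 0$ since $\hat{\mu}\ge 0$. Pulling back along $\ilr$ yields $\tfrac{d\mu}{d\lambda_{A}}(p)=c\,\hat{h}(\ilr(p))$ for a constant $c>0$, which is $C^{\infty}$ on $\Delta$, as claimed.

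The main obstacle is the middle step: turning the probabilistic invariance into the distributional PDE rigorously, i.e.\ justifying the exchange of limit and integral in $\int\hat{L}_{A}\varphi\, d\hat{\mu}=\lim_{t\downarrow 0}t^{-1}\int(\hat{P}_{t}\varphi-\varphi)\, d\hat{\mu}$ when $\mu$, and hence $\hat{\mu}$, is only assumed $\sigma$-finite, and---if one does not simply cite the Bogachev--Krylov--R\"ockner theory---ensuring that the version of elliptic regularity one applies is stated for (possibly infinite) Radon measures rather than for $L^{1}_{\mathrm{loc}}$ data. Given the boundedness of $\hat{\theta}$ and $D\hat{\theta}$, both points are standard, but they deserve a careful word in the write-up.
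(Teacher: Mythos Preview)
Your approach is correct and essentially identical to the paper's: both push $\mu$ forward to $\hat{\mu}=\ilr_{\#}\mu$ on $\mathbb{R}^{n-1}$, observe that $\hat{\mu}$ is a weak solution of the stationary Fokker--Planck equation $\hat{L}_{A}^{*}\hat{\mu}=0$ with smooth coefficients, and then invoke Weyl's regularity theorem (the paper cites Bogachev's monograph) to conclude $\hat{\mu}\in C^{\infty}$. Your write-up is in fact more careful than the paper's about the passage from semigroup invariance to the distributional PDE, which the paper simply asserts.
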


\begin{proof}
	By (\ref{eq:semrel}) invariant measures $\mu$ for $(P_{t})$ and $\hat{\mu}$ for $(\hat{P}_{t})$, respectively, are in one to one correspondence via
	\begin{equation}\label{eq:cor}
	\mu=\ilr^{-1}_{\#}\hat{\mu} \qquad\text{ and }\qquad \hat{\mu}=\ilr_{\#}\mu.
	\end{equation}
	Therefore, and because $\lambda_{A}=\ilr^{-1}_{\#}\lambda_{\mathbb{R}^{n-1}}$ it is enough to prove, that every invariant measure $\hat{\mu}$ of $(\hat{P}_{t})$ admits a smooth Lebesgue density. But in fact, suppose $\hat{\mu}$ is an invariant measure. Then it is a positive, weak solution to the stationary Fokker-Planck equation
	\begin{equation}\label{eq:fp}
	0=\hat{L}^{*}_{A}\hat{\mu}:=\Delta\hat{\mu}-\nabla\cdot\left(\hat{\mu}\hat{\theta}\right),
	\end{equation}
	$\hat{L}^{*}_{A}$ being the formal $L^{2}(\lambda_{\mathbb{R}^{n-1}})$-adjoint of $\hat{L}_{A}$. However, since $\hat{L}^{*}_{A}$ has smooth coefficients, by Weyl's regularity theorem \citep[c.f.][Thm 1.4.6]{bogachev2015fokker} any weak solution to (\ref{eq:fp}) is a smooth classical solution. In particular, $\hat{\mu}$ has a smooth density with respect to the Lebesgue measure on $\mathbb{R}^{n-1}$.
\end{proof}

We are now ready to present the main result of this subsection. Let us define for every $p\in\Delta$ the potential
\begin{equation}\label{eq:pot}
\Lambda(p):=\sum_{i=1}^{n}a_{ii}p_{i}-p\cdot Ap.
\end{equation} 
Occassionally, we will also write $\Lambda_{A}$ if we want to emphasize the dependence on the matrix $A$. As we shall see, this potential plays a decisive role for the long time behavior of stochastic replicator dynamics.

\begin{thm}[Stationary Fokker-Planck equation]\label{thm:inv}
	Let $\mu$ be invariant for $(P_{t})$. Then the $\lambda_{A}$ density of $\mu$ is a solution to the stationary Fokker-Planck equation
	\begin{equation}\label{eq:FP}
	0=L'\mu-Z_{A}\mu-\Lambda\mu.
	\end{equation}
	Vice versa, assume $\mu\in C^{2}(\Delta)$ is a strictly positive solution to (\ref{eq:FP}) whose logarithmic gradient is locally Lipschitz and has linear growth, i.e. 
	\begin{equation}\label{eq:growthcond}
	\left\|\nabla^{g}\ln \mu(p)\right\|\le K(1+\|p\|_{A})
	\end{equation}
	for some $K>0$ and all $p\in\Delta$. Then $\mu d\lambda_{A}$ is invariant for $(P_{t})$.
\end{thm}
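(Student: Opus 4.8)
The plan is to push the whole statement through the isometric log-ratio transform and reduce it to the classical Euclidean stationary Fokker--Planck equation for the diffusion $\hat Y$ of Lemma \ref{lem:coc}. Three facts are already at our disposal: invariant measures of $(P_t)$ and of $(\hat P_t)$ are in bijection via $\mu\mapsto\ilr_\#\mu$; one has $\lambda_A=\ilr^{-1}_\#\lambda_{\mathbb{R}^{n-1}}$, so that the $\lambda_A$-density of an invariant $\mu$ equals $\hat\mu\circ\ilr$, where $\hat\mu$ is the Lebesgue density of $\ilr_\#\mu$; and $\ilr$, being an isometry intertwining $L_A=L'+Z_A$ with $\hat L_A=\Delta+\hat\theta\cdot\nabla$, intertwines their formal $L^2$-adjoints as well. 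Consequently equation (\ref{eq:FP}) is nothing but the Euclidean equation $\hat L^*_A\hat\mu=\Delta\hat\mu-\nabla\cdot(\hat\mu\,\hat\theta)=0$ transported to $\Delta$, once the zeroth order term is identified; so the only genuine computation is to verify $\operatorname{div}\hat\theta=\Lambda\circ\ilr^{-1}$.

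For that identity I would differentiate $\hat\theta(x)=\Psi A\sfm(\Psi^{\top}x)$ using the Jacobian formula (\ref{eq:jac}), which gives $D\hat\theta(x)=\Psi A\,g^{-1}(p)\,\Psi^{\top}$ with $p:=\ilr^{-1}(x)=\sfm(\Psi^{\top}x)$. Taking the trace and invoking the contrast identities (\ref{eq:contrastprop}) together with $g^{-1}(p)\mathbf 1_n=0$ (the rows of $g^{-1}$ sum to zero), the $\tfrac1n\mathbf 1_n\otimes\mathbf 1_n$ contribution drops out and one is left with $\operatorname{div}\hat\theta(x)=\operatorname{tr}(A\,g^{-1}(p))=\sum_i a_{ii}p_i-p\cdot Ap=\Lambda(p)$, i.e. exactly the potential (\ref{eq:pot}).

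For the forward implication: if $\mu$ is invariant for $(P_t)$, then $\hat\mu:=\ilr_\#\mu$ is invariant for $(\hat P_t)$ and, as already observed in the proof of the preceding proposition, a positive weak — hence, by Weyl regularity, smooth — solution of $\hat L^*_A\hat\mu=0$. Writing $\nabla\cdot(\hat\mu\,\hat\theta)=\nabla\hat\mu\cdot\hat\theta+\hat\mu\operatorname{div}\hat\theta$, plugging in the trace computation, and pulling back along $\ilr$ — which sends $\Delta$ to $L'$, $\hat\theta\cdot\nabla$ to $Z_A$, $\operatorname{div}\hat\theta$ to $\Lambda$, and $\hat\mu$ to the $\lambda_A$-density of $\mu$ — yields $0=L'\mu-Z_A\mu-\Lambda\mu$, which is (\ref{eq:FP}). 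For the converse, given $\mu\in C^2(\Delta)$ strictly positive and solving (\ref{eq:FP}), set $\hat\mu:=\mu\circ\ilr^{-1}\in C^2(\mathbb{R}^{n-1})$; by the same (now reversed) pull-back it is a strictly positive classical solution of $\hat L^*_A\hat\mu=0$, equivalently $\nabla\cdot(\hat\mu(\nabla\ln\hat\mu-\hat\theta))=0$, so that $\hat L_A$ is a $\hat\mu$-divergence-free perturbation of the $\hat\mu$-symmetric operator $\Delta+\nabla\ln\hat\mu\cdot\nabla$. The chain rule with (\ref{eq:jac}) gives $\nabla\ln\hat\mu(x)=\Psi\,\nabla^g\ln\mu(\ilr^{-1}(x))$, and since $\|\Psi\|\le 1$ and $\|\ilr(p)\|=\|p\|_A$, the growth hypothesis (\ref{eq:growthcond}) becomes the linear bound $\|\nabla\ln\hat\mu(x)\|\le K(1+\|x\|)$ on $\mathbb{R}^{n-1}$. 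This is precisely the a-priori control at infinity needed to invoke a standard sufficiency criterion from Fokker--Planck--Kolmogorov theory \citep[c.f.][]{bogachev2015fokker}, by which such a $\hat\mu$ is a genuine — not merely infinitesimally — invariant measure for $(\hat P_t)$; transporting back, $\mu\,d\lambda_A$ is invariant for $(P_t)$.

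The two coordinate pull-backs and the trace identity are routine. The delicate point — and where I would concentrate — is the converse: turning the stationary Fokker--Planck equation into honest invariance for the semigroup. A positive solution of the stationary equation need not be invariant without control at infinity, so the crux is to pinpoint which theorem of \citep{bogachev2015fokker} the growth condition (\ref{eq:growthcond}) feeds into (it should supply the Lyapunov-type / integrability input ruling out loss of mass) and to check its hypotheses for the bounded, smooth, globally Lipschitz drift $\hat\theta$.
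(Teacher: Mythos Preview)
Your approach is the paper's approach: both pass through $\ilr$, compute the formal adjoint by showing $\operatorname{div}\hat\theta=\Lambda\circ\ilr^{-1}$ (your trace computation via $D\hat\theta=\Psi A\,g^{-1}\Psi^{\top}$ and (\ref{eq:contrastprop}) is exactly right), and deduce (\ref{eq:FP}) from $\hat L_A^*\hat\mu=0$. The forward direction is identical.

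For the converse --- the point you correctly flag as delicate --- the paper does \emph{not} reach for a ready-made theorem in \citep{bogachev2015fokker}. Instead it uses \citep[Thm~4.8.5]{pinsky_1995}: a strictly positive $C^2$ solution $\hat\mu$ of $\hat L_A^*\hat\mu=0$ is genuinely invariant for $(\hat P_t)$ if and only if the diffusion generated by the Doob-transformed adjoint
\[
\hat{\mathcal L}^{\dagger}f \;=\; \Delta f \;-\; \hat\theta\cdot\nabla f \;-\; \nabla\ln\hat\mu\cdot\nabla f
\]
is non-explosive. Your chain-rule translation of (\ref{eq:growthcond}) into $\|\nabla\ln\hat\mu(x)\|\le K(1+\|x\|)$ is precisely what is needed here: combined with the assumed local Lipschitz property and the boundedness/Lipschitzness of $\hat\theta$, all coefficients of $\hat{\mathcal L}^{\dagger}$ are locally Lipschitz with linear growth, so non-explosion follows from classical SDE theory \citep[ch.~6]{ikeda2014stochastic}. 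So the missing ingredient is Pinsky's non-explosion criterion for the $h$-transformed adjoint, not a direct Lyapunov/integrability result from Bogachev--Krylov--R\"ockner--Shaposhnikov.
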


\begin{proof}
We have to determine $L^{*}_{A}$ the formal $L^{2}(\lambda_{A})$-adjoint of $L_{A}$. Since, $\lambda_{A}$ is reversible for $L'$, we only need to focus the vector field $Z_{A}$ and claim that its formal $L^{2}(\lambda_{A})$-adjoint $Z^{*}_{A}$ is given by
\begin{equation*}
Z^{*}_{A}=-Z_{A}-\Lambda.
\end{equation*}	
Indeed, consider $f,g\in C^{\infty}_{0}(\Delta)$, set $\hat{f}:=f\circ\ilr^{-1}$ and likewise for $\hat{g}$. Then, since 
	\begin{align*}
Z_{A}f=(\Psi A \ilr^{-1},\nabla f\circ\ilr^{-1})\circ\ilr,
\end{align*} 
it follows
\begin{equation}\label{eq:ibp}
\int_{\Delta}gZ_{A}fd\lambda_{A}=\int_{\mathbb{R}^{n-1}}\hat{g}(x)(\Psi A \ilr^{-1}(x),\nabla \hat{f}(x))dx.
\end{equation}
But because
\begin{equation*}
\nabla\cdot\left(\Psi A\ilr^{-1}(x)\right)=\sum_{i=1}^{n}a_{ii}\ilr^{-1}_{i}(x)-\ilr^{-1}(x)\cdot A\ilr^{-1}(x)=\Lambda\circ\ilr^{-1}(x)
\end{equation*}
and $\hat{f},\hat{g}\in C^{\infty}_{0}(\mathbb{R}^{n-1})$, integrating by parts in (\ref{eq:ibp}) yields
\begin{equation}
\int_{\Delta}gZ_{A}fd\lambda_{A}=\int_{\Delta}(-Z_{A}g-\Lambda g)fd\lambda_{A}.
\end{equation}	
Now if $\mu d\lambda_{A}$ is invariant for $(P_{t})$, then for all smooth and compactely supported test functions $f$ on $\Delta$ we have
\begin{equation*}
0=\int_{\Delta}L_{A}f\mu d\lambda_{A}=\int_{\Delta}fL^{*}_{A}\mu d\lambda_{A},
\end{equation*}  
which yields the claim by the Lemma of du Bois-Reymond.  \par 
For the converse direction assume $\mu\in C^{2}(\Delta)$ is a positive solution to (\ref{eq:FP}). Then ${\hat{\mu}:=\mu\circ\ilr^{-1}}$ is a positive solution to
\begin{equation}
0=\hat{L}^{*}_{A}\hat{\mu},
\end{equation}
or in other words, $\hat{\mu}$ is infinitesimally invariant for $(\hat{P}_{t})$. Next, consider the Doob $h$-transform $\hat{\mathcal{L}}$ of $\hat{L}_{A}$ defined by 
\begin{equation}
\hat{\mathcal{L}}f:=\frac{1}{\hat{\mu}}\hat{L}_{A}(\hat{\mu}f)
\end{equation}
and set
\begin{equation}
\hat{\mathcal{L}}^{\dagger}:=\hat{\mathcal{L}}^{*}-\hat{\mathcal{L}}^{*}1. 
\end{equation}
Thus, $\hat{\mathcal{L}}^{\dagger}$ is the formal $L^{2}(\lambda_{\mathbb{R}^{n-1}})$-adjoint of $\hat{\mathcal{L}}$ minus its zero order part and
\begin{equation}
\hat{\mathcal{L}}^{\dagger}f=\Delta f-\hat{\theta}\cdot\nabla f-\nabla\ln\hat{\mu}\cdot \nabla f
\end{equation}
for $f$ sufficiently smooth. Now according to \citep[Thm 4.8.5]{pinsky_1995}, the density $\hat{\mu}$ is invariant for $(\hat{P}_{t})$ iff the diffusion given by the martingale problem for $\hat{\mathcal{L}}^{\dagger}$ is non-explosive. But using the growth condition (\ref{eq:growthcond}) and identifying $x=\ilr(p)$, we know
\begin{equation}
\|\nabla\ln\hat{\mu}(x)\|=\|\nabla^{g}\ln\mu(p)\|\le K(1+\|p\|_{A})=K(1+\|x\|)
\end{equation}
for all $x\in\mathbb{R}^{n-1}$. Likewise, it follows that the logarithmic gradient of $\hat{\mu}$ is locally Lipschitz. Therefore, all coefficients of $\hat{\mathcal{L}}^{\dagger}$ are locally Lipschitz and satisfy a linear growth condition, whence classical theory \citep[c.f.][ch. 6]{ikeda2014stochastic} guarantees apart from well-posedeness of the martingale problem for $\hat{\mathcal{L}}^{\dagger}$, that the associated diffusion is conservative. 
\end{proof}

The rest of this subsection is devoted to the exploitation of the previous theorem. Note that some of the subsequent results were proven already in \citep{hofbauer2009time}. However, whereas Hofbauer and Imhof invoke elaborate Lyapunov function techniques, in our present setting they appear as direct consequences of Theorem \ref{thm:inv}.

\begin{cor}\label{cor:Aiinv}
	The Aitchison measure $\lambda_{A}$ is invariant for $(P_{t})$ if and only if
	the payoff matrix $\mathbb{R}^{n\times n}\ni A=(a_{ij})$ satisfies for all $i\ne j$
	\begin{equation}\label{eq:macon}
	a_{ij}+a_{ji}-a_{ii}-a_{jj}=0.
	\end{equation}
	Moreover, if $n>3$ and $A$ satisfies (\ref{eq:macon}), then the replicator diffusion is transient in that $Y_{t}\to\partial\Delta$ as $t\to\infty$ almost surely.
	
\end{cor}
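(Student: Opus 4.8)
The plan is to apply Theorem \ref{thm:inv} directly with $\mu\equiv 1$, the constant density of $\lambda_A$ itself, and read off when the stationary Fokker--Planck equation $0=L'\mu - Z_A\mu - \Lambda\mu$ is satisfied. Since $\mu\equiv 1$ has vanishing (logarithmic) gradient, the growth condition \eqref{eq:growthcond} is trivially met, so the converse direction of Theorem \ref{thm:inv} applies without any extra work. For $\mu\equiv 1$ we have $L'\mu=0$ and $Z_A\mu = A p\cdot\nabla^g\mu(p)=0$, so the equation collapses to $\Lambda\equiv 0$ on $\Delta$, i.e. $\sum_i a_{ii}p_i - p\cdot Ap = 0$ for all $p\in\Delta$. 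The first step is therefore to show that this polynomial identity in $p$ is equivalent to the linear algebra condition \eqref{eq:macon}. Writing $\sum_i a_{ii}p_i = \sum_i a_{ii}p_i\big(\sum_j p_j\big) = \sum_{i,j}a_{ii}p_ip_j$ (using $\sum_j p_j=1$ on $\Delta$) and $p\cdot Ap = \sum_{i,j}a_{ij}p_ip_j$, one gets $\Lambda(p) = \sum_{i,j}(a_{ii}-a_{ij})p_ip_j$; symmetrizing over $i\leftrightarrow j$ and noting the diagonal terms cancel, $\Lambda(p)=\tfrac12\sum_{i\ne j}(a_{ii}+a_{jj}-a_{ij}-a_{ji})p_ip_j$. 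Since the monomials $p_ip_j$ ($i<j$) are linearly independent as functions on the (relatively open) simplex, $\Lambda\equiv 0$ iff every coefficient vanishes, which is exactly \eqref{eq:macon}. The forward direction of Theorem \ref{thm:inv} gives the ``only if'', the converse gives the ``if''.

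For the transience claim under \eqref{eq:macon} with $n>3$, the idea is to use Lemma \ref{lem:lem} (or Lemma \ref{lem:coc}) to transfer the question to the Euclidean diffusion $\hat Y_t = \ilr(Y_t)$ in $\mathbb{R}^{n-1}$, but it is cleaner to work directly with $\tilde Y_t=\clr(Y_t)\in H$. From the computation in the proof of Theorem \ref{lem:lem} we have $d\tilde Y^i_t = -\clr_i(\theta(Y_t))\,dt + d\tilde B^i_t$ where $\tilde B=\clr(B)$ is Brownian motion on $H$, and $\clr(\theta(p)) = \clr(\sfm(Ap)) = \operatorname{pr}_H(Ap)$, the orthogonal projection of $Ap$ onto $H$. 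Thus $\tilde Y$ is a Brownian motion on the $(n-1)$-dimensional space $H$ with a \emph{bounded} drift (the drift $\operatorname{pr}_H A\sfm(\tilde Y_t)$ is bounded because $\sfm$ maps into the bounded set $\bar\Delta$ and $A$ is a fixed matrix). The plan is then to invoke the standard fact that a diffusion on $\mathbb{R}^{d}$ with unit diffusion coefficient and bounded drift is transient whenever $d\ge 3$ — this is a classical comparison/Lyapunov argument (compare $\|\tilde Y_t\|$ with a Bessel-type process, or use the function $\|x\|^{2-d}$ off a ball and the fact that the generator applied to it has a favorable sign at large $\|x\|$ once $d\ge3$, the bounded drift being dominated by the Laplacian contribution at infinity). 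Here $d=n-1\ge 3$ precisely when $n\ge 4$, i.e. $n>3$. Transience of $\tilde Y$ in $H$, i.e. $\|\tilde Y_t\| = \|\clr(Y_t)\|_A = \|Y_t\|_A\to\infty$ a.s., then translates via the equivalence of the Aitchison and Euclidean topologies (Lemma \ref{lem:topequiv}) and the fact that $\|p\|_A\to\infty$ forces $p\to\partial\Delta$, giving $Y_t\to\partial\Delta$ a.s.

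The main obstacle I expect is making the transience comparison fully rigorous: one needs that the bounded drift really cannot prevent escape to infinity in dimension $\ge 3$. The safe route is to cite a standard recurrence/transience criterion for non-degenerate diffusions with bounded coefficients (e.g. the Khasminskii-type test via the Lyapunov function $x\mapsto\|x\|^{2-d}$, checking $\hat L_A\|x\|^{2-d}\le 0$ for $\|x\|$ large — the $O(\|x\|^{1-d})$ drift term is swamped by the sign-definite $O(\|x\|^{-d})$ Laplacian term once $d\ge 3$) and then apply the classical dichotomy. A minor subtlety is that $\tilde Y$ lives on the hyperplane $H\cong\mathbb{R}^{n-1}$ rather than on $\mathbb{R}^n$, but since $\clr(\theta(p))$ already lies in $H$ and $\tilde B$ is Brownian motion on $H$, no projection issues arise and one simply works in the intrinsic $(n-1)$-dimensional coordinates $\hat Y=\Psi\tilde Y$, where the equation is $d\hat Y_t = \hat\theta(\hat Y_t)\,dt + dB_t$ with $\hat\theta$ bounded as already noted in Proposition \ref{prop:p}.
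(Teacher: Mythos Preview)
Your reduction to $\Lambda\equiv 0$ via Theorem \ref{thm:inv} and the symmetrization giving $\Lambda(p)=\tfrac12\sum_{i\ne j}(a_{ii}+a_{jj}-a_{ij}-a_{ji})p_ip_j$ are correct and match the paper's approach; the paper extracts the coefficients by testing at the midpoints $\tfrac12\epsilon_i+\tfrac12\epsilon_j$ rather than invoking linear independence of the monomials, but the content is the same.

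The transience argument, however, has a genuine gap. The ``standard fact'' you invoke---that a diffusion on $\mathbb{R}^{d}$ with unit diffusion matrix and \emph{bounded} drift is transient whenever $d\ge 3$---is false. Take $d\hat Y_t=-c\,\hat Y_t/\|\hat Y_t\|\,dt+\sqrt{2}\,dB_t$: the radial part has drift $\tfrac{d-1}{r}-c$, and Feller's scale function $s'(r)=r^{-(d-1)}e^{cr}$ gives $s(\infty)=\infty$, i.e.\ recurrence, in every dimension once $c>0$. Your Lyapunov computation fails for the same reason: $\|x\|^{2-d}$ is \emph{harmonic}, so the Laplacian contribution is exactly zero, not ``sign-definite $O(\|x\|^{-d})$'', and there is nothing to swamp the $O(\|x\|^{1-d})$ drift term; for any other power $\|x\|^{-\alpha}$ the drift contribution $O(\|x\|^{-\alpha-1})$ \emph{dominates} the Laplacian contribution $O(\|x\|^{-\alpha-2})$ at infinity. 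Note too that your argument never uses condition \eqref{eq:macon}: boundedness of $\hat\theta$ holds for \emph{every} payoff matrix, so if the reasoning were sound it would force transience for all $A$ in dimension $n>3$---contradicting e.g.\ Corollary \ref{cor: diri}, where the replicator diffusion carries an invariant Dirichlet probability measure and is recurrent.

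The missing ingredient is that \eqref{eq:macon} makes the drift not merely bounded but \emph{divergence-free}. From the integration by parts in the proof of Theorem \ref{thm:inv} one has $\nabla\cdot\hat\theta=\Lambda\circ\ilr^{-1}$, so $\Lambda\equiv 0$ yields $\nabla\cdot\hat\theta\equiv 0$. It is this incompressibility (together with boundedness and uniform ellipticity) that forces transience in dimension $n-1\ge 3$; the paper invokes precisely such a criterion from Pinsky's monograph. Intuitively, a divergence-free drift cannot concentrate mass and hence cannot overcome the Brownian tendency to escape in three or more dimensions---this is what breaks down for the radially inward-pointing counterexample above, whose divergence is strictly negative.
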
   
Notice, the previous Corollary applies in particular to zero-sum games, that is, when $A=-A^{\top}$.
\begin{proof}   
	Since the Lipschitz and growth condition are trivially satisfied, by Theorem \ref{thm:inv}, $\lambda_{A}$ is invariant iff for all $p\in\Delta$
	\begin{equation}\label{eq:nullc}
	0=\Lambda(p)=\sum_{i=1}^{n}a_{ii}p_{i}-p\cdot Ap.
	\end{equation}
	Observe, if $A$ satisfies (\ref{eq:macon}), then 
	\begin{equation}\label{eq:pap}
	\begin{aligned}
	p\cdot Ap&=\sum_{i\neq j}^{n}p_{i}a_{ij}p_{j}+\sum_{i=1}^{n}a_{ii}p_{i}^{2}=-\sum_{i\neq j}^{n}p_{i}a_{ij}p_{j}+2\sum_{i\neq j}^{n}p_{i}a_{ii}p_{j}+\sum_{i=1}^{n}a_{ii}p_{i}^{2}\\
	&=-p\cdot Ap+2\sum_{i=1}^{n}a_{ii}p_{i},
	\end{aligned}
	\end{equation} 
	whence (\ref{eq:nullc}) is fulfilled. On the other hand, if (\ref{eq:nullc}) is true for all $p\in\Delta$, then by continuity it is valid also for $p\in\bar{\Delta}$. Testing, (\ref{eq:nullc}) with $p=\frac{1}{2}\epsilon_{i}+\frac{1}{2}\epsilon_{j}$ for $i\ne j$ immediately yields (\ref{eq:macon}).\par 
	Regarding the second statement, notice $(\ref{eq:macon})$ entails
	\begin{equation*}
	\nabla\cdot\hat{\theta}(x)=\Lambda\circ\ilr^{-1}(x)=0
	\end{equation*}
	for all $x\in\mathbb{R}^{n-1}$. Therefore, if $n\ge4$ \citep[cor. 6.3]{pinsky_1995} implies $\hat{Y}$ is transient on $\mathbb{R}^{n-1}$ which yields the claim.
\end{proof}

 Occasionally, one is interested in invariant distributions of Gibbs-type
 \begin{equation}\label{eq:inm}
 \mu=e^{-V}\lambda_{A},
 \end{equation}
 for some $V\colon\Delta\to\mathbb{R}$. We denote by $\Gamma$ the carr\'e du champs operator associated to $L'$, that is
 \begin{equation*} 
 \Gamma(f,g):=\langle\nabla^{A}f,\nabla^{A}g\rangle_{A}=(\nabla^{g}f,\nabla^{g}g)=\sum_{i=1}^{n}Z_{i}fZ_{i}g,
 \end{equation*}
 and $\Gamma f:=\Gamma(f,f)$. Then a straight forward application of the diffusion property of $L_{A}$ combined with Proposition \ref{thm:inv} implies
 
 \begin{lem}\label{thm:hj}
 	Let $V\in C^{2}(\Delta)$ and $\nabla^{g}V$ be locally Lipschitz and satisfy the growth condition (\ref{eq:growthcond}). Then, $\mu=e^{-V}\lambda_{A}$ is an invariant measure for $(P_{t})$ if and only if $V$ satisfies
 	\begin{equation}\label{eq:hj}
 	0=L'V-\Gamma V-Z_{A}V+\Lambda.
 	\end{equation}
 \end{lem}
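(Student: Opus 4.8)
The plan is to deduce the lemma directly from the stationary Fokker--Planck characterisation in Theorem \ref{thm:inv}. Indeed, writing $\mu = e^{-V}$ for the candidate $\lambda_A$-density, Theorem \ref{thm:inv} tells us (under the stated regularity and growth hypotheses on $\nabla^g V$, which are precisely the hypotheses on the logarithmic gradient of $e^{-V}$) that $e^{-V}\lambda_A$ is invariant for $(P_t)$ if and only if $e^{-V}$ is a strictly positive $C^2$ solution of the stationary equation $0 = L'\mu - Z_A\mu - \Lambda\mu$ in \eqref{eq:FP}. So the whole content of the lemma is the purely algebraic equivalence: $e^{-V}$ solves \eqref{eq:FP} $\iff$ $V$ solves \eqref{eq:hj}. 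Once this is established, both implications follow: if $\mu = e^{-V}\lambda_A$ is invariant, the first half of Theorem \ref{thm:inv} gives \eqref{eq:FP} for $e^{-V}$, hence \eqref{eq:hj}; conversely, \eqref{eq:hj} gives \eqref{eq:FP} for the strictly positive $C^2$ function $e^{-V}$, whose logarithmic gradient $\nabla^g\ln e^{-V} = -\nabla^g V$ is locally Lipschitz with linear growth by assumption, so the second half of Theorem \ref{thm:inv} yields invariance.

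The key computation exploits that $L' = \sum_{i=1}^n Z_i^2$ is a diffusion operator with carré du champ $\Gamma$. Applying the Leibniz rule to each $Z_i^2$ gives, for smooth $h$ and smooth $\phi\colon\RR\to\RR$, the identity
\[
L'(\phi\circ h) = \phi'(h)\,L'h + \phi''(h)\,\Gamma h .
\]
Taking $\phi(t) = e^{-t}$ and $h = V$ produces $L'e^{-V} = e^{-V}\bigl(\Gamma V - L'V\bigr)$, while $Z_A$, being a first-order operator, satisfies $Z_A e^{-V} = -e^{-V}\,Z_A V$. Substituting these into \eqref{eq:FP} and factoring out the strictly positive quantity $e^{-V}$ gives
\[
L'e^{-V} - Z_A e^{-V} - \Lambda e^{-V} = -e^{-V}\bigl(L'V - \Gamma V - Z_A V + \Lambda\bigr),
\]
so that \eqref{eq:FP} for $e^{-V}$ holds if and only if \eqref{eq:hj} holds for $V$. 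This is exactly the equivalence needed in the first paragraph.

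I do not anticipate a genuine obstacle; the proof is essentially this one line of diffusion-operator bookkeeping. The only two points deserving care are (i) verifying the Leibniz/diffusion identity for $L' = \sum Z_i^2$ so that the second-order contribution is correctly identified with $\Gamma V$ — this is immediate from $Z_i^2(\phi\circ h) = \phi''(h)(Z_i h)^2 + \phi'(h)Z_i^2 h$ and summing over $i$ — and (ii) checking that the hypotheses of Theorem \ref{thm:inv} translate verbatim, which they do because passing from $V$ to $e^{-V}$ preserves $C^2$-smoothness and strict positivity and changes the logarithmic gradient only by a sign, so the local Lipschitz and linear-growth conditions \eqref{eq:growthcond} on $\nabla^g V$ are exactly those required of $\nabla^g\ln e^{-V}$.
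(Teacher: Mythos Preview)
Your proposal is correct and matches the paper's approach exactly: the paper does not write out a proof but simply states that the lemma follows from ``a straight forward application of the diffusion property of $L_{A}$ combined with'' Theorem~\ref{thm:inv}, which is precisely the reduction you carry out. Your explicit computation of $L'e^{-V}$ and $Z_A e^{-V}$ and the verification that the hypotheses of Theorem~\ref{thm:inv} transfer are the details the paper leaves to the reader.
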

 
 \begin{remark}
 	By classical theory \citep[c.f.][]{bakry2013analysis}, we know that the Langevin dynamic on the Aitchsion simplex
 	\begin{equation}\label{eq:Ailang88}
 	dY_{t}=\ominus\nabla^{A}V(Y_{t})dt\oplus dX_{t},
 	\end{equation}
 	which is associated to the generator $L_{V}:=L'-\Gamma(V,\cdot)$ has an invariant measure of the form (\ref{eq:inm}). Since $L_{A}=L_{V}+Z_{A}+\Gamma(V,\cdot)$, the condition in (\ref{eq:hj}) holds if and only if for all $f\in C^{\infty}_{0}(\Delta)$ we have
 	\begin{equation}\label{eq:intcond}
 	\int_{\Delta}(Z_{A}f+\Gamma(V,f))d\mu=0
 	\end{equation}
 	
 \end{remark}

   \begin{cor}\label{cor: diri}
   	Let $\alpha\in\mathbb{R}^{n}_{>0}$ and set $|\alpha|:=\alpha_{1}+\dots+\alpha_{n}$. The Dirichlet distribution with parameter $\alpha$ is the (unique) invariant measure for $(P_{t})$ if and only if 
   	   
   	\begin{enumerate}[label=(\roman*)]
   		\item the payoff matrix $A=(a_{ij})$ fulfills for $i\ne j$
   		\begin{equation*}
   		a_{ij}+a_{ji}-a_{ii}-a_{jj}=2|\alpha|
   		\end{equation*}
   		and
   		\item $\frac{\alpha}{|\alpha|}$ is a Nash equilibrium for $A$.
   	\end{enumerate}
   	 \end{cor}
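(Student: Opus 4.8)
The plan is to realise the Dirichlet distribution as a Gibbs measure on the Aitchison simplex and feed it into Lemma~\ref{thm:hj}. Since $\frac{d\lambda_{A}}{d\lambda_{n-1}}(p)=\left(\prod_{i=1}^{n}p_{i}\right)^{-1}$, the $\lambda_{A}$-density of $\mathrm{Dir}(\alpha)$ is $\mu(p)=\frac{\Gamma(|\alpha|)}{\prod_{i}\Gamma(\alpha_{i})}\prod_{i=1}^{n}p_{i}^{\alpha_{i}}$, i.e. $\mu=e^{-V}$ with $V(p)=-\sum_{k=1}^{n}\alpha_{k}\ln p_{k}+\mathrm{const}$. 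A one-line computation gives $\nabla^{g}V(p)=|\alpha|p-\alpha$, which is smooth and bounded on $\Delta$, so the local-Lipschitz and linear-growth hypotheses (\ref{eq:growthcond}) of Lemma~\ref{thm:hj} are trivially satisfied. Hence $\mathrm{Dir}(\alpha)$ is invariant for $(P_{t})$ \emph{if and only if} $V$ solves the PDE $0=L'V-\Gamma V-Z_{A}V+\Lambda$ on $\Delta$; and once invariance holds, uniqueness is automatic from the corollary on uniqueness of invariant probability measures, since $\mathrm{Dir}(\alpha)\in\mathcal{P}(\Delta)$.

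The next step is to compute the four terms of (\ref{eq:hj}) explicitly. From $\nabla^{g}V=|\alpha|p-\alpha$ one gets $\Gamma V=\|\nabla^{g}V\|^{2}=|\alpha|^{2}\sum_{i}p_{i}^{2}-2|\alpha|\sum_{i}p_{i}\alpha_{i}+\sum_{i}\alpha_{i}^{2}$ and $Z_{A}V=Ap\cdot\nabla^{g}V=|\alpha|\,(p\cdot Ap)-\alpha\cdot Ap$. For the second-order term I would use the Hörmander expansion (\ref{eq:ilrgen}) of $L'=2L$, namely $L'V=\sum_{i,j}G^{ij}\partial_{ij}V+2\sum_{i}b_{i}\,\partial_{i}V$ with $b_{i}$ the Stratonovich corrector (\ref{eq:stratcor}); since $\partial_{ij}V=\delta_{ij}\alpha_{i}p_{i}^{-2}$ and $G^{ii}(p)=p_{i}^{2}\bigl(1-2p_{i}+\sum_{k}p_{k}^{2}\bigr)$, everything collapses to $L'V=|\alpha|\bigl(1-\sum_{k}p_{k}^{2}\bigr)$. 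Substituting back, (\ref{eq:hj}) turns into a degree-two polynomial identity in $p$ that must hold for all $p\in\Delta$.

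The final step is to read off (i) and (ii) from this identity. Using $\sum_{k}p_{k}=1$ to homogenise it to degree two, the identity holds on $\Delta$ iff the associated symmetric bilinear form vanishes identically on $\RR^{n}$ (a homogeneous quadratic vanishing on the affine hyperplane $\{\sum_{k}p_{k}=1\}$ vanishes on all of $\RR^{n}$ by density). Writing out the symmetric matrix, its diagonal entries yield, for each $k$,
\begin{equation*}
(A^{\top}\alpha)_{k}-|\alpha|a_{kk}+2|\alpha|\alpha_{k}=|\alpha|^{2}+\textstyle\sum_{j}\alpha_{j}^{2},
\end{equation*}
and its off-diagonal entries, after substituting the diagonal relations, collapse \emph{exactly} to $a_{ij}+a_{ji}-a_{ii}-a_{jj}=2|\alpha|$, which is condition (i). Granting (i), one expands $(A^{\top}\alpha)_{k}$ using (i) and the diagonal relations reduce to the statement that $(A\alpha)_{j}$ is independent of $j$; since $\alpha/|\alpha|$ lies in the interior of $\Delta$, a linear functional $q\mapsto q\cdot A(\alpha/|\alpha|)$ attaining its maximum over $\bar{\Delta}$ at the interior point $\alpha/|\alpha|$ must be constant on $\bar{\Delta}$, so ``$(A\alpha)_{j}$ constant in $j$'' is equivalent to ``$\alpha/|\alpha|$ is a Nash equilibrium for $A$'', which is condition (ii). This gives both implications.

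I expect the bookkeeping in the quadratic-form identity — in particular verifying that the off-diagonal part is precisely (i) and that the diagonal part, modulo (i), is precisely (ii) — to be the main (and essentially only) laborious obstacle; identifying $V$, checking the growth hypothesis, and invoking Lemma~\ref{thm:hj} together with the uniqueness corollary are routine. A minor delicate point is the evaluation of $L'V$ through (\ref{eq:ilrgen}), where one must use the exact forms of $G^{ii}$ and of the Stratonovich corrector $b_{i}$ from (\ref{eq:stratcor})–(\ref{eq:ilrgen}); everything else is straightforward algebra on $\bar{\Delta}$.
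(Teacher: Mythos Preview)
Your approach is correct and essentially the same as the paper's: both identify the Dirichlet density as $e^{-V}\lambda_{A}$ with $V(p)=-\sum_{i}\alpha_{i}\ln p_{i}$, verify the growth hypotheses, feed this into Lemma~\ref{thm:hj}, and then analyse the resulting polynomial identity (your ``homogenised quadratic form'' framing is equivalent to the paper's testing at the vertices $\epsilon_{i}$ and midpoints $\tfrac{1}{2}(\epsilon_{i}+\epsilon_{j})$, which pick out exactly the diagonal and off-diagonal entries of the symmetric matrix). The only minor difference is in the direction (i)$+$(ii)$\Rightarrow$(\ref{eq:hj}): the paper splits the identity into two pieces and invokes the external fact $h\cdot Ah=-|\alpha|\|h\|^{2}$ from \citep{hofbauer2009time}, whereas your argument obtains both implications from the same self-contained matrix-entry algebra.
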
  
   
\begin{proof}
	  
	The Dirichlet distribution amounts to the choice of
	\begin{equation*}
	V(p):=-\sum_{i=1}^{n}\alpha_{i}\ln p_{i}
	\end{equation*} 
	in (\ref{eq:inm}). Evidently,
	\begin{equation}
	\nabla^{g}V(p)=|\alpha|p-\alpha
	\end{equation}
	satisfies the Lipschitz and growth conditions of Theorem \ref{thm:inv}. Plugging in the definition of $V$ in (\ref{eq:hj}) yields,	
	\begin{equation}\label{eq:dircond}
	0=(\alpha-|\alpha|p)\cdot Ap+\Lambda(p)-\|\alpha-|\alpha|p\|^{2}+|\alpha|\left(1-\|p\|^{2}\right).
	\end{equation}    
	Hence in order to prove Corollary \ref{cor: diri}, by Lemma \ref{thm:hj} it is enough to show that (\ref{eq:dircond}) holds for all $p\in\Delta$ if and only if the conditions $(i)$ and $(ii)$ of Corollary \ref{cor: diri} are fulfilled.\par 
	Let us start by assuming that $A$ and $\alpha$ are such that $(i)$ and $(ii)$ are valid. It was shown in \citep{hofbauer2009time} that a matrix for which $(i)$ holds obeys 
	\begin{equation}\label{eq:hcondi}
	h\cdot Ah=-|\alpha|\|h\|^{2}
	\end{equation}
	for every $h\in H$.
	Therefore, and because $\alpha/|\alpha|$ is an interior NE for $A$ by $(ii)$, we know that for any $p\in\Delta$
	\begin{align*}
	&(\alpha-|\alpha|p)\cdot Ap-\|\alpha-|\alpha|p\|^{2}\\
	=&-|\alpha|\left(\frac{\alpha}{|\alpha|}-p\right)\cdot A\left(\frac{\alpha}{|\alpha|}-p\right)-\|\alpha-|\alpha|p\|^{2}+|\alpha|\left(\frac{\alpha}{|\alpha|}-p\right)\cdot A\left(\frac{\alpha}{|\alpha|}\right)=0
	\end{align*}
	Next, observe that arguing akin to (\ref{eq:pap}), we see that for every $p\in\Delta$      
	\begin{equation}\label{eq:w2}
	p\cdot Ap=|\alpha|\left(1-\|p\|^{2}\right)+\sum_{i=1}^{n}a_{ii}p_{i},
	\end{equation}
	which yields (\ref{eq:dircond}) for all $p\in\Delta$. Note that the previous identity can be rephrased as 
	\begin{equation}\label{eq:w3}
	\Lambda_{A}=\Lambda_{-|\alpha|\id}.
	\end{equation}
	Now assume on the contrary that (\ref{eq:dircond}) holds for all $p\in\Delta$. Testing the equation with $\epsilon_{i}$, the $i$-th unit vector in $\mathbb{R}^{n}$ (more precisely, take a sequence $(p_{n})_{n\ge0}\subset\Delta$ with $p_{n}\to\epsilon_{i}$ as $n\to\infty$, test (\ref{eq:dircond}) with $p_{n}$ and take limits) yields
	
	\begin{equation}\label{eq:w1}
	0=\sum_{k=1}^{n}a_{ki}\alpha_{k}-|\alpha|a_{ii}-\|\alpha\|^{2}+2|\alpha|\alpha_{i}-|\alpha|^{2}.
	\end{equation}
	Next, testing with $\frac{1}{2}\epsilon_{i}+\frac{1}{2}\epsilon_{j}$ we find
	\begin{align*}
	0&=\sum_{k=1}^{n}(a_{ki}+a_{kj})\alpha_{k}-\frac{|\alpha|}{2}(a_{ii}+a_{ij}+a_{ji}+a_{jj})+a_{ii}+a_{jj}-\frac{1}{2}(a_{ii}+a_{ij}+a_{ji}+a_{jj})\\
	&-2\|\alpha\|^{2}+2|\alpha|(\alpha_{i}+\alpha_{j})-|\alpha|^{2}+|\alpha|.
	\end{align*}
	Using (\ref{eq:w1}), the previous expression simplifies to
	\begin{equation*}
	0=|\alpha|^{2}+|\alpha|-\frac{|\alpha|+1}{2}(a_{ij}+a_{ji}-a_{ii}-a_{jj}),
	\end{equation*}
	which thus gives condition $(i)$. But then, due to (\ref{eq:hcondi}), (\ref{eq:w2}) and by assumption
	
	\begin{equation*}
	|\alpha|\left(p-\frac{\alpha}{|\alpha|}\right)\cdot A\frac{\alpha}{|\alpha|}=\Lambda(p)+|\alpha|\left(1-\|p\|^{2}\right)=0
	\end{equation*}
	for every $p\in\Delta$. Hence $\frac{\alpha}{|\alpha|}$ is a NE for $A$.
\end{proof}
        
\begin{remark}  
Using (\ref{eq:conneg}) it is not hard to see that for every conditionally negative definite payoff matrix $A\in\mathbb{R}^{n\times n}$, there (uniquely) exists a Euclidean distance matrix $D$ \citep[c.f.][]{krislock2012euclidean} such that
\begin{equation}\label{eq:gencase}
\Lambda_{A}(p)=-\lambda(1-\|p\|^{2})-\frac{1}{2}p\cdot Dp=\Lambda_{-\lambda\id+\frac{1}{2}D}(p).
\end{equation} 
By \citep[Thm 3.1]{hofbauer2009time} the mean of an invariant measure for the replicator semigroup constitutes a Nash equilibrium, say $p^{*}$, for $A$. But then in the light of Corollary \ref{cor: diri} and in particular (\ref{eq:w3}), the term $-\lambda(1-\|p\|^{2})$ in (\ref{eq:gencase}) corresponds to a Dirichlet distribution with parameter $\lambda p^{*}$. This suggests that for general $A\in\Gamma^{<}$ invariant measures of stochastic replicator dynamics will be perturbations or generalizations of the Dirichlet family in which the matrix $D$ enters as an additional parameter. Whether there exists an explicit expression for those measures is left as an interesting question for further investigations.
\end{remark}

 \addcontentsline{toc}{subsection}{5.2.\ \ Wasserstein contractions for stochastic replicator dynamics }

\textbf{5.2. Wasserstein contractions for stochastic replicator dynamics}\par\smallskip 
 
\noindent 

Corollary \ref{cor: diri} in the previous subsection provided us with necessary and sufficient conditions for a stochastic replicator dynamic to attain the Dirichlet distribution $Dir_{\alpha}$ as invariant measure.\par 
Another diffusion process on the Aitchison simplex for which $Dir_{\alpha}$ is invariant (in fact reversible) is the Langevin equation

	\begin{equation}\label{eq:la}
dY_{t}=\ominus\nabla^{A}V(Y_{t})dt\oplus dX_{t},
\end{equation}
with
\begin{equation*}
V(p)=-\sum_{i=1}^{n}\alpha_{i}\ln p_{i}.
\end{equation*}
Recall that $W_{A}$ is the natural Wasserstein distance on $(\Delta,\langle\cdot,\cdot\rangle_{A})$ with cost $d^{2}_{A}$ (see (\ref{eq:Wadi})). Now let us also introduce 
\begin{equation}\label{eq:watilde}
W^{2}_{\Delta}(\mu,\nu):=\inf_{\pi\in\Pi(\mu,\nu)}\int_{\Delta\times\Delta}\|p-q\|^{2}\pi(dpdq).
\end{equation}

 The following proposition is the motivation for our subsequent investigations.
	\begin{prop}\label{pro:dircon}
	Consider the Langevin diffusion $Y=(Y_{t})$ as given through (\ref{eq:la}) and denote by $\mu_{t}:=Law(Y_{t})$. Then,
	\begin{equation}\label{eq:wacon11}
	W_{A}(\mu_{t},Dir_{\alpha})\le W_{A}(\mu_{0},Dir_{\alpha})
	\end{equation}
	and moreover,
	\begin{equation}\label{eq:wacon12}
	W_{\Delta}(\mu_{t},Dir_{\alpha})\le e^{-|\alpha|t}W_{A}(\mu_{0},Dir_{\alpha}).
	\end{equation}
\end{prop}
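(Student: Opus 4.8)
The plan is to transport everything to the Euclidean picture via $\ilr$ and exploit the fact that, there, the Langevin dynamic \eqref{eq:la} becomes a drifted Brownian motion whose drift is the gradient of a strongly convex potential, so that classical Wasserstein contraction estimates apply. Concretely, set $\hat V := V\circ\ilr^{-1}$ and $\hat\mu_t := \ilr_{\#}\mu_t$. By Theorem \ref{lem:lem} (applied with $F=\ominus\nabla^A V$ and reading off the corresponding Euclidean SDE \eqref{eq:Ailangeu888}) together with the chain rule $\nabla^G V = g^{-1}\nabla^g V$ and the identity $\nabla^g V(p)=|\alpha|p-\alpha$ computed in the proof of Corollary \ref{cor: diri}, the process $\hat Y_t=\ilr(Y_t)$ solves a Langevin SDE on $\mathbb{R}^{n-1}$ of the form $d\hat Y_t = -\nabla\hat V(\hat Y_t)\,dt + \sqrt2\,dB_t$ whose stationary measure is $\ilr_{\#}Dir_\alpha$. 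The first step is therefore to verify that $\hat V$ is $|\alpha|$-strongly convex, i.e. $\nabla^2\hat V \succeq |\alpha|\,\mathrm{Id}_{n-1}$; this should follow from the Hessian computation of $V$ on $\Delta$ (the Hessian of $-\sum\alpha_i\ln p_i$ in Aitchison coordinates), using $\ilr(e_i)=\epsilon_i$ and the contrast-matrix identities \eqref{eq:contrastprop}, and I expect it to reduce to $\Psi\,\mathrm{diag}(\alpha_i/p_i)\,\Psi^\top + |\alpha|(\text{something}\succeq 0) \succeq |\alpha|\mathrm{Id}$, or more cleanly to the observation that $\hat V(x) = \frac{|\alpha|}{2}\|x-x^*\|^2 + (\text{affine})$ up to a genuinely convex remainder.

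The second step is the standard synchronous-coupling argument. Run two copies $\hat Y_t, \hat Y_t'$ of the Euclidean Langevin SDE driven by the \emph{same} Brownian motion $B$, with $\hat Y_0\sim\hat\mu_0$, $\hat Y_0'\sim\ilr_{\#}Dir_\alpha$ coupled optimally for $W_{\mathbb{R}^{n-1}}$. Then $\tfrac{d}{dt}\|\hat Y_t-\hat Y_t'\|^2 = -2\big(\nabla\hat V(\hat Y_t)-\nabla\hat V(\hat Y_t'),\,\hat Y_t-\hat Y_t'\big) \le -2|\alpha|\|\hat Y_t-\hat Y_t'\|^2$ by $|\alpha|$-strong convexity, so $\|\hat Y_t-\hat Y_t'\|\le e^{-|\alpha|t}\|\hat Y_0-\hat Y_0'\|$ pathwise, and taking expectations gives $W_{\mathbb{R}^{n-1}}(\hat\mu_t, \ilr_{\#}Dir_\alpha)\le e^{-|\alpha|t}W_{\mathbb{R}^{n-1}}(\hat\mu_0,\ilr_{\#}Dir_\alpha)$. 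Via the isometry \eqref{eq:waequi}, i.e. $W_A = W_{\mathbb{R}^{n-1}}\circ(\ilr_\#,\ilr_\#)$, this is exactly $W_A(\mu_t,Dir_\alpha)\le e^{-|\alpha|t}W_A(\mu_0,Dir_\alpha)$, which in particular yields the non-expansiveness \eqref{eq:wacon11} (using $e^{-|\alpha|t}\le 1$).

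For the stronger bound \eqref{eq:wacon12} I need to get from a statement about $W_A$ (Aitchison cost) to one about $W_\Delta$ (Euclidean cost $\|p-q\|^2$ on the left). Here the key is the pointwise comparison \eqref{eq:normequi}/\eqref{eq:eq:normequi}, namely $\|p-q\|_{\mathbb{R}^n}\le d_A(p,q)$, which comes from $1$-Lipschitzness of $\sfm$. Using the \emph{same} optimal/synchronous coupling $\pi_t$ that realizes (or nearly realizes) $W_A(\mu_t,Dir_\alpha)$, one estimates $W_\Delta^2(\mu_t,Dir_\alpha)\le\int\|p-q\|^2\,\pi_t(dp\,dq)\le\int d_A^2(p,q)\,\pi_t(dp\,dq)$, and if $\pi_t$ is taken optimal for $W_A$ the right side is exactly $W_A^2(\mu_t,Dir_\alpha)\le e^{-2|\alpha|t}W_A^2(\mu_0,Dir_\alpha)$, giving \eqref{eq:wacon12}. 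The main obstacle I anticipate is the strong-convexity verification in step one: one must be careful that the Aitchison Hessian of $V$, pushed to Euclidean coordinates, genuinely dominates $|\alpha|\,\mathrm{Id}_{n-1}$ and not merely $|\alpha|\,(\mathrm{Id}_n - \tfrac1n\mathbf{1}\otimes\mathbf{1})$ read the wrong way — the contrast-matrix relations \eqref{eq:contrastprop} are exactly what make this work, but tracking the projection onto $H$ correctly is the delicate point; everything else is routine once strong convexity with the sharp constant $|\alpha|$ is in hand.
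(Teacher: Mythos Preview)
Your overall strategy (synchronous coupling, transport via $\ilr$) is sound, but there is a genuine gap in step one: the potential $\hat V = V\circ\ilr^{-1}$ is \emph{not} $|\alpha|$-strongly convex. From $\nabla^g V(p)=|\alpha|p-\alpha$ one computes $\nabla\hat V(x)=|\alpha|\,\Psi\big(\sfm(\Psi^\top x)-\alpha/|\alpha|\big)$, hence
\[
\nabla^2\hat V(x)=|\alpha|\,\Psi\,g^{-1}(p)\,\Psi^\top,\qquad p=\ilr^{-1}(x).
\]
At the barycenter $p=e$ this equals $\tfrac{|\alpha|}{n}\,\mathrm{Id}_{n-1}$, and as $p\to\partial\Delta$ the Hessian degenerates to zero. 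So neither $|\alpha|\,\mathrm{Id}_{n-1}$ nor any positive multiple of it is a global lower bound, and your conclusion $W_A(\mu_t,Dir_\alpha)\le e^{-|\alpha|t}W_A(\mu_0,Dir_\alpha)$ is not available; consequently your derivation of \eqref{eq:wacon12} from it collapses as well. The contrast-matrix identities \eqref{eq:contrastprop} do not rescue this: they only tell you that $\Psi^\top$ is an isometry $\mathbb{R}^{n-1}\to H$, so the question reduces to whether $h\cdot g^{-1}(p)h\ge\|h\|^2$ for $h\in H$, which is false.

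What the paper does instead is exploit a property strictly between monotonicity and strong monotonicity of $\nabla^A V$: \emph{co-coercivity} of the softmax, $(\sfm(x)-\sfm(y),x-y)\ge\|\sfm(x)-\sfm(y)\|^2$. This gives
\[
\langle\nabla^A V(Y_t)\ominus\nabla^A V(Y_t'),\,Y_t\ominus Y_t'\rangle_A=|\alpha|\big(\sfm(\cdot)-\sfm(\cdot),\,\cdot-\cdot\big)\ge|\alpha|\,\|Y_t-Y_t'\|^2,
\]
so the synchronous coupling yields $\tfrac{d}{dt}\|Y_t\ominus Y_t'\|^2_A\le -2|\alpha|\,\|Y_t-Y_t'\|^2$ (Aitchison norm on the left, Euclidean on the right). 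The bare monotonicity ($\ge 0$) already gives \eqref{eq:wacon11}; combining the mixed dissipation with $\|Y_t-Y_t'\|\le\|Y_t\ominus Y_t'\|_A$ and Gronwall then yields $\|Y_t-Y_t'\|\le e^{-|\alpha|t}\|Y_0\ominus Y_0'\|_A$, which after optimising over couplings is precisely \eqref{eq:wacon12}. In short, the exponential rate lives in $W_\Delta$, not in $W_A$, and the mechanism producing it is co-coercivity of $\sfm$, not strong convexity of $\hat V$.
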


\begin{proof}
	We first show that $V(p)=-\sum\alpha_{i}\ln p_{i}$ is a convex function on the Aitchison simplex. Since $\nabla^{g}V(p)=|\alpha|p-\alpha$, we have
	\begin{equation*}
	\langle\nabla^{A}V(p)\ominus\nabla^{A}V(q),p\ominus q\rangle_{A}=|\alpha|(p-q,\clr(p)-\clr(q)).
	\end{equation*}
	Now replacing $p=\sfm(x)$ and $q=\clr(y)$ yields
	\begin{equation*}
	\langle\nabla^{A}V(p)\ominus\nabla^{A}V(q),p\ominus q\rangle_{A}=|\alpha|(\sfm(x)-\sfm(y),x-y)\ge 0
	\end{equation*}
	by monotonicity of the softmax function (\citep[c.f.][]{gao2017properties}) and hence $V$ is convex. Now take two solutions $Y$ and $Y'$ to (\ref{eq:Ailang}), both driven by the same noise $X$. Then
	\begin{equation*}
	\frac{d}{dt}\|Y_{t}\ominus Y'_{t}\|^{2}_{A}=-2\langle\nabla^{A}V(Y_{t})\ominus\nabla^{A}V(Y_{t}'),Y_{t}\ominus Y_{t}'\rangle_{A}\le 0
	\end{equation*}
	which immediately implies (\ref{eq:wacon11}).\par 
	We move on to the \enquote*{mixed} Wasserstein contraction claimed in (\ref{eq:wacon12}). First observe that $\sfm$ satisfies a stronger property then just being monotone, namely the softmax function is \textit{co-coercive} \citep[c.f.][]{gao2017properties}, i.e.
	\begin{equation}
	(\sfm(x)-\sfm(y),x-y)\ge\|\sfm(x)-\sfm(y)\|^{2},\quad x,y\in\mathbb{R}^{n}.
	\end{equation}
	Therefore,
	\begin{equation*}
	\frac{d}{dt}\|Y_{t}\ominus Y'_{t}\|^{2}_{A}\le -2|\alpha|\|Y_{t}-Y_{t}'\|^{2}
	\end{equation*}
	and using (\ref{eq:normequi}) we find
	
	\begin{equation*}
	\|Y_{t}-Y_{t}'\|^{2}_{2}\le\|Y_{t}\ominus Y_{t}'\|^{2}_{A}\le\|Y_{0}\ominus Y_{0}'\|^{2}_{A}-2|\alpha|\int_{0}^{t}\|Y_{s}-Y_{s}'\|^{2}_{2}ds.
	\end{equation*}
	Thus, by Gronwall's inequality
	\begin{equation*}
	\|Y_{t}-Y_{t}'\|^{2}_{2}\le e^{-2|\alpha|}\|Y_{0}\ominus Y_{0}'\|^{2}_{A},
	\end{equation*}
	which yields the claim after minimising on both sides over all couplings.
\end{proof}

	Of course, the result just proven is actually stronger then the inequalities stated in (\ref{eq:wacon11}) and (\ref{eq:wacon12}). Indeed, within these two estimates we may replace $Dir_{\alpha}$ by any other (law of a) solution to (\ref{eq:la}), say $(\mu_{t}')$. Then Proposition \ref{pro:dircon} asserts that with respect to $W_{\Delta}$ such laws attract exponentially fast, irrespective of the initial data. \par 
	
	  The question we pose now is: can we find payoff matrices $A$ which enforce a synchronization in the relaxation to equilibrium between replicator diffusion and the Langevin dynamic (\ref{eq:la})? Or in other words, for which payoff matrices can we monitor for stochastic replicator dynamics the same contraction behavior as the one in Proposition \ref{pro:dircon}? 
	
 In order to answer this question, let us first dwell upon the deterministic setting.

	\begin{thm}\label{thm:ecnsd}
		Let $A\in\mathbb{R}^{n\times n}$ be a payoff matrix. The following four statements are equivalent:
		\begin{enumerate}[label=(\roman*)]
			\item  For every $p,q\in\Delta$ let $(p(t))$  and $(q(t))$ be two solutions to the deterministic replicator equation (\ref{eq:requad}) starting in $p$ and $q$, respectively. Then,
			\begin{equation}\label{eq:e1}
			\|p(t)\ominus q(t)\|_{A}\le\|p\ominus q\|_{A},\quad t\ge 0
			\end{equation}
			\item The map $p\mapsto\ominus\theta(p)=\theta(-p)$ is monotone, i.e.
			\begin{equation}\label{eq:e2}
			\langle\theta(p)\ominus\theta(q),p\ominus q\rangle_{A}\le 0,\quad p,q\in\Delta.
			\end{equation}
			\item For every $p\in\Delta$ and $h\in H$
			\begin{equation}\label{eq:e3}
			h\cdot Ag^{-1}(p)h\le 0.
			\end{equation}		
			\item There exist $\lambda\ge0$ and vectors $u,v\in\mathbb{R}^{n}$ such that
			\begin{equation}\label{eq:e5}
			A=-\lambda\id+u\otimes\mathbf{1}+\mathbf{1}\otimes v
			\end{equation}
		\end{enumerate}	
	\end{thm}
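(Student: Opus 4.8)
I will establish the three equivalences $(i)\Leftrightarrow(ii)$, $(ii)\Leftrightarrow(iii)$ and $(iii)\Leftrightarrow(iv)$ separately; the first two are ``soft'' and follow by carrying everything over to the Hilbert space $H$ through the $\clr$-isometry, while the genuine work is in $(iii)\Leftrightarrow(iv)$. For $(i)\Leftrightarrow(ii)$, recall that by Lemma~\ref{lem:bla} applied to $F=\theta$ (with $\theta$ as in~(\ref{eq:theta})) a curve $(p(t))$ solves the replicator equation~(\ref{eq:requad}) if and only if it solves $\dot p=\theta(p)$ on the Aitchison simplex, i.e.\ (by the proof of Lemma~\ref{lem:bla}) if and only if $\tfrac{d}{dt}\clr(p(t))=\clr(\theta(p(t)))$. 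Hence for two solutions $\tfrac{d}{dt}\|p(t)\ominus q(t)\|_A^2=2\langle\theta(p(t))\ominus\theta(q(t)),\,p(t)\ominus q(t)\rangle_A$, so $(ii)$ (the integrand being $\le0$) gives $(i)$, while $(i)$ differentiated at $t=0^+$ for solutions with arbitrary initial data $p,q\in\Delta$ gives $(ii)$. For $(ii)\Leftrightarrow(iii)$, set $\tilde\theta:=\clr\circ\theta\circ\sfm=\operatorname{pr}_{H}\circ A\circ\sfm$, so $\clr\circ(\ominus\theta)\circ\sfm=-\tilde\theta$; then $(ii)$ says precisely that $-\tilde\theta$ is a monotone operator on $H$, which (as $\tilde\theta$ is smooth) is equivalent to $(D\tilde\theta(x)h,h)_{H}\le0$ for all $x,h\in H$. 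By~(\ref{eq:jac}), $D\sfm=\diag(\sfm)-\sfm\otimes\sfm=g^{-1}\circ\sfm$, hence $D\tilde\theta(x)=\operatorname{pr}_{H}A\,g^{-1}(\sfm(x))$; since $h\in H$ is orthogonal to $\ker\operatorname{pr}_{H}=\operatorname{span}(\mathbf{1})$ we get $(D\tilde\theta(x)h,h)_{H}=h\cdot Ag^{-1}(\sfm(x))h$, and because $\sfm$ maps onto $\Delta$ this is exactly $(iii)$.

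The direction $(iv)\Rightarrow(iii)$ is a one-line computation: for $A=-\lambda\id+u\otimes\mathbf{1}+\mathbf{1}\otimes v$ and $h\in H$ one has $A^\top h=-\lambda h+(u\cdot h)\mathbf{1}$, whence $h\cdot Ag^{-1}(p)h=-\lambda\,h\cdot g^{-1}(p)h\le0$, using that $g^{-1}(p)$ is positive semidefinite and $\lambda\ge0$. For $(iii)\Rightarrow(iv)$ the first move is to reduce $(iii)$ to a \emph{vertex} condition. Fix $h\in H$ and put $u:=A^\top h$; from $g^{-1}(p)=\diag(p)-p\otimes p$ and $\mathbf{1}\cdot p=1$ one finds, for $p\in\bar{\Delta}$, that $h\cdot Ag^{-1}(p)h=p\cdot Q(h)p$ with $Q(h)$ the symmetric matrix $Q(h)_{kl}=\tfrac12(h_k-h_l)(u_k-u_l)$. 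In particular $Q(h)$ has vanishing diagonal, so (using continuity of $(iii)$ up to $\bar{\Delta}$ and testing $p=\tfrac12(\epsilon_k+\epsilon_l)$) the inequality $p\cdot Q(h)p\le0$ on $\bar{\Delta}$ is equivalent to $Q(h)_{kl}\le0$ for all $k\ne l$. Thus $(iii)$ is equivalent to $\big((\epsilon_i-\epsilon_j)\cdot h\big)\big((A(\epsilon_i-\epsilon_j))\cdot h\big)\le0$ for all $h\in H$ and all $i\ne j$.

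Fixing $i\ne j$ and writing $x:=\epsilon_i-\epsilon_j\in H$, $y:=Ax$, the condition that $h\mapsto(x\cdot h)(y\cdot h)$ be $\le0$ on $H$ forces $\operatorname{pr}_{H}y\in\operatorname{span}(x)$: decompose $\operatorname{pr}_{H}y=\mu x+y_2$ with $y_2\perp x$ in $H$ and test $h=\delta x+\varepsilon y_2$ with $\delta\downarrow0$, observing that a nonzero $y_2$ would make the product strictly positive. Hence $\operatorname{pr}_{H}A|_{H}$ leaves every root line $\operatorname{span}(\epsilon_i-\epsilon_j)$ invariant; since on each plane $\operatorname{span}(\epsilon_i-\epsilon_j,\epsilon_j-\epsilon_k)$ the three lines through $\epsilon_i-\epsilon_j$, $\epsilon_j-\epsilon_k$, $\epsilon_i-\epsilon_k$ are distinct eigenlines, $\operatorname{pr}_{H}A|_{H}=c\,\id_{H}$ for a single scalar $c$ (for $n=2$ this is automatic as $\dim H=1$). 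Testing $(iii)$ at the barycenter $e$, where $g^{-1}(e)=\tfrac1n\operatorname{pr}_{H}$, gives $c\|h\|^2=h\cdot Ah\le0$ for all $h\in H$, so $c=-\lambda$ with $\lambda\ge0$. Finally, $\operatorname{pr}_{H}(A+\lambda\id)h=0$ for $h\in H$ means $(A+\lambda\id)H\subseteq\operatorname{span}(\mathbf{1})$; and a linear map carrying the hyperplane $H=\ker(\mathbf{1}^\top)$ into the line $\operatorname{span}(\mathbf{1})$ is necessarily of the form $u\otimes\mathbf{1}+\mathbf{1}\otimes v$, so $A=-\lambda\id+u\otimes\mathbf{1}+\mathbf{1}\otimes v$, which is $(iv)$.

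\textbf{Main obstacle.} Steps $(i)\Leftrightarrow(ii)$ and $(ii)\Leftrightarrow(iii)$ are essentially mechanical once one has the $\clr$-isometry and the derivative criterion for monotone operators. The crux is $(iii)\Rightarrow(iv)$: passing from the continuum of inequalities ``$h\cdot Ag^{-1}(p)h\le0$ for all $p\in\Delta$'' to the single vanishing-diagonal matrix $Q(h)$ and its sign conditions, and then the spectral observation that an operator on $H$ stabilising all root lines $\operatorname{span}(\epsilon_i-\epsilon_j)$ must be a non-positive multiple of $\id_H$. I would expect the bookkeeping in the reduction to the vertex condition, and checking the low-dimensional case, to be the only fiddly parts.
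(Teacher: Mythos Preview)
Your proof is correct and, for the cycle $(i)\Leftrightarrow(ii)\Leftrightarrow(iii)$, essentially coincides with the paper's argument (differentiate the squared Aitchison distance at $t=0$; then pass to the derivative criterion for monotonicity via $D\sfm=g^{-1}\circ\sfm$). For the crucial implication $(iii)\Rightarrow(iv)$, both you and the paper first reduce to the same ``edge'' condition: testing $p=\tfrac12(\epsilon_i+\epsilon_j)$ in $h\cdot Ag^{-1}(p)h\le0$ yields $(h_i-h_j)\big((A^{\top}h)_i-(A^{\top}h)_j\big)\le0$ for all $h\in H$ and $i\ne j$. From there the paper proceeds by coordinates: it deduces $a^{i}-a^{j}=s_{ij}\mathbf{1}+t_{ij}(\epsilon_i-\epsilon_j)$ and then does a dimension-dependent case analysis ($n=2$, $n=3$, $n\ge4$) to force $t_{ij}\equiv t$ and $s_{ij}=v_i-v_j$. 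Your route is a bit more conceptual: you observe that the edge condition forces $\operatorname{pr}_{H}A|_{H}$ to leave every root line $\operatorname{span}(\epsilon_i-\epsilon_j)$ invariant, and then use that a linear map on a $2$-plane with three distinct eigenlines must be scalar there; since the root planes overlap this gives a single scalar $-\lambda$ on all of $H$. This buys you a uniform argument that avoids the paper's casework, at the cost of the small extra claim (which you should spell out in one line) that any $B\in\mathbb{R}^{n\times n}$ with $BH\subseteq\operatorname{span}(\mathbf{1})$ can be written as $u\otimes\mathbf{1}+\mathbf{1}\otimes v$.
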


	\begin{proof} 
		The chain of implications from $(i)$ to $(iii)$ is fairly standard and we only indicate the key ideas. 
		(i)$\implies$(ii): differentiate $\|p(t)\ominus q(t)\|_{A}^{2}$ at $t=0$. (ii)$\implies$(iii): observe that (\ref{eq:e2}) is equivalent to 
		\begin{equation}
		(A\sfm(x)-A\sfm(y),x-y)\le 0,\quad x,y\in H.
		\end{equation}
		Thus, for every $h\in H$ and $\tau>0$, one has
		\begin{equation*}
		(A\sfm(x+\tau h)-A\sfm(x),h)\le 0.
		\end{equation*}
		Dividing by $\tau$ and taking $\tau\searrow 0$ yields (\ref{eq:e3}).\par 
		 We are now proving (iii)$\implies$(iv), thereby starting with the cases of dimensions $n=2$. First observe that (\ref{eq:e3}) necessitates that $A$ is conditionally negative semi-definite, which can be seen by choosing $p=n^{-1}\mathbf{1}$. Therefore, if $n=2$ and 
		\begin{equation*}
		A=\left( \begin{array}{rr}a & b \\ c & d \\\end{array}\right),
		\end{equation*}
		we know $a+d\le b+c$. Then, the claim follows by taking $2\lambda=b+c-(a+d)$, $2u=(b-d,c-a)$ and $2v=(a+c,b+d)$.\par 
		Let us now we consider the case $n\ge 3$ and write $a^{i}:=A\epsilon_{i}$ for the $i$-th column of $A$. Notice that by continuity (\ref{eq:e3}) holds for all $p\in\bar{\Delta}$. Testing with $2p=\epsilon_{i}+\epsilon_{j}$, for $i\ne j$, we learn that $A$ must satisfy the peculiar monotonicity-like property
		\begin{equation}\label{eq:necco}
		(h_{i}-h_{j})\sum_{k=1}^{n}(a_{ki}-a_{kj})h_{k}\le 0
		\end{equation}
		for all $h\in H$ and $i\ne j$. By continuity we infer
		\begin{equation}\label{eq:moneq}
		h\in H \text{ with }	h_{i}=h_{j} \implies (A^{\top}h)_{i}=(A^{\top}h)_{j}.
		\end{equation}
		Equivalent to the previous implication is the fact that $h\in\langle\lbrace\mathbf{1},\epsilon_{i}-\epsilon_{j}\rbrace\rangle^{\bot}$ entails \linebreak ${h\in\langle a^{i}-a^{i}\rangle^{\bot}}$. Thus, we can find scalars $s_{ij},t_{ij}$ such that
		\begin{equation}
		a^{i}-a^{j}=s_{ij}\mathbf{1}+t_{ij}(\epsilon_{i}-\epsilon_{j}).
		\end{equation}
		Then, if $n=3$ writing $a^{i}-a^{j}$=$a^{i}-a^{k}+a^{k}-a^{j}$ for distinct $i,j,k$ and since $\mathbf{1}=\epsilon_{i}+\epsilon_{j}+\epsilon_{k}$ is follows
		\begin{equation}
		(\tilde{s}+t_{ij}-t_{ik})\epsilon_{i}+(\tilde{s}+t_{kj}-t_{ij})\epsilon_{j}+(\tilde{s}+t_{ik}-t_{kj})\epsilon_{k}=0,
		\end{equation}
		where $\tilde{s}:=(s_{ij}-s_{ik}+s_{kj})$. By linear independence we deduce that $t_{ij}=:t$ must not depend on the indices and $\tilde{s}=0$, which moreover entails $s_{ij}=v_{i}-v_{j}$ for some $v\in\mathbb{R}^{3}$. Likewise, for $n\ge 4$ the linear independence of $\mathbf{1},\epsilon_{i}-\epsilon_{k},\epsilon_{i}-\epsilon_{j},\epsilon_{j}-\epsilon_{k}$ for pairwise distinct $i,j,k$, yields independence of $t_{ij}$ on the indices as well as $s_{ij}=v_{i}-v_{j}$ for some $v\in\mathbb{R}^{n}$.
		At last, consider the vector $u:=a^{i}-t\epsilon_{i}-v_{i}\mathbf{1}$ and observe that it does not depend on $i$. Thus, it follows $A=t\operatorname{id}+u\otimes\mathbf{1}+\mathbf{1}\otimes v$. Using (\ref{eq:e3}) it is easy to see that $-\lambda:=t\le0$, which proves the claim.\par 
		We end the proof by showing (iv)$\implies$(i). First notice, if $A$ satisfies (\ref{eq:e5}) then $(Ap)_{i}=-\lambda p_{i}+u_{i}+p\cdot v$. Hence, if $p(t)$ is a solution to the replicator equation (\ref{eq:requad})
		\begin{equation}
		\frac{d}{dt}\clr_{i}p(t)=-\lambda p_{i}(t)+u_{i}-\frac{1}{n}\sum_{k=1}^{n}u_{k}+\lambda.
		\end{equation}
		But then it follows, that if $(q(t))$ is another solution to (\ref{eq:requad}) 
		\begin{equation*}
		\frac{d}{dt}\|p(t)\ominus q(t)\|^{2}_{A}=-2\lambda(\clr(p(t))-\clr(q(t)),p(t)-q(t))\le 0
		\end{equation*}
		which yields (\ref{eq:e1}).	
	\end{proof}  
	 
	Note that for the case $\lambda=0$, i.e. when $A=u\otimes\mathbf{1}+\mathbf{1}\otimes v$, we only know $A\in\Gamma^{\le}$ from which one cannot infer the existence of ESS. However, due to the simple structure of such payoff matrices, one can easyly give a full characterization of Nash equilibria and ESS. In fact, the following proposition follows straight forward from the definitions of NE and ESS. 
	 
	\begin{prop}\label{prop:NA}
		Denote by $N(A)$ the set of all Nash equilibria of $A$.	If $A=u\otimes\mathbf{1}+\mathbf{1}\otimes v$, then
\begin{equation*}
N(A)=\argmax_{p\in\bar{\Delta}}p\cdot u.
\end{equation*}
Moreover, such $A$ has an ESS iff $N(A)$ is a singleton (i.e. when $u$ has a distinct maximal entry).
	\end{prop}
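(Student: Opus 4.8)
The claim concerns a payoff matrix of the special form $A = u\otimes\mathbf{1} + \mathbf{1}\otimes v$, and the key point is that the quadratic and bilinear expressions appearing in the definitions of Nash equilibrium and ESS collapse dramatically. First I would compute, for arbitrary $p,q\in\bar\Delta$, the bilinear form $q\cdot Ap$. Writing out the tensor products gives $q\cdot Ap = (q\cdot u)(\mathbf{1}\cdot p) + (q\cdot\mathbf{1})(v\cdot p) = q\cdot u + v\cdot p$, using that $p,q$ sum to one. In particular $p^{*}\cdot Ap^{*} = p^{*}\cdot u + v\cdot p^{*}$ and $p\cdot Ap^{*} = p\cdot u + v\cdot p^{*}$, so the equilibrium condition $p\cdot Ap^{*}\le p^{*}\cdot Ap^{*}$ for all $p\in\bar\Delta$ is equivalent to $p\cdot u\le p^{*}\cdot u$ for all $p\in\bar\Delta$, i.e. exactly to $p^{*}\in\argmax_{p\in\bar\Delta}p\cdot u$. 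This proves $N(A)=\argmax_{p\in\bar\Delta}p\cdot u$.

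\textbf{ESS characterization.} For the ESS statement I would examine the stability condition. Suppose $p^{*}$ is a Nash equilibrium, i.e. $p^{*}\in\argmax_{p\in\bar\Delta}p\cdot u$, and suppose $p\ne p^{*}$ satisfies the equality $p\cdot Ap^{*} = p^{*}\cdot Ap^{*}$; by the computation above this means $p\cdot u = p^{*}\cdot u$, so $p$ is itself a maximizer of $p\mapsto p\cdot u$ and hence $p\in N(A)$. But then, again by the bilinear identity, $p\cdot Ap = p\cdot u + v\cdot p$ and $p^{*}\cdot Ap = p^{*}\cdot u + v\cdot p = p\cdot u + v\cdot p = p\cdot Ap$, since $p^{*}\cdot u = p\cdot u$. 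Thus the strict inequality $p\cdot Ap < p^{*}\cdot Ap$ required by \eqref{eq:stabcond} can never hold — the two sides are always equal. Consequently the stability condition is satisfiable only vacuously, i.e. only when there is no $p\ne p^{*}$ with $p\cdot Ap^{*}=p^{*}\cdot Ap^{*}$. Given that $p^{*}$ is a maximizer, this says precisely that the maximizer is unique, i.e. $N(A)$ is a singleton. Finally, $N(A)=\argmax_{p\in\bar\Delta}p\cdot u$ is a singleton if and only if the linear functional $p\mapsto p\cdot u$ has a unique maximizer over the simplex, which (since the maximum of a linear functional over a simplex is attained at a vertex, and the set of maximizers is the face spanned by the maximal-coordinate vertices) holds if and only if $u$ has a strictly largest entry.

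\textbf{Remark on the obstacle.} There is essentially no hard step here; the only thing to be slightly careful about is the logical bookkeeping in the ESS part — specifically, making explicit that an ESS must in particular be a Nash equilibrium (this is already recorded in the definition), and that the vacuous satisfiability of \eqref{eq:stabcond} is equivalent to uniqueness of the Nash equilibrium rather than to some weaker condition. One should also note that when $N(A)$ is a singleton $\{p^{*}\}$, the ESS property does hold (vacuously), so the equivalence is genuinely an "iff". I would present the two bilinear identities $q\cdot Ap = q\cdot u + v\cdot p$ and its consequence $p\cdot Ap = p\cdot u + v\cdot p$ up front as a single display, then derive both assertions of the proposition as immediate corollaries.
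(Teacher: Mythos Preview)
Your proof is correct and is exactly the direct verification the paper has in mind; indeed, the paper omits the proof entirely, remarking only that the proposition ``follows straight forward from the definitions of NE and ESS.'' Your bilinear identity $q\cdot Ap = q\cdot u + v\cdot p$ is precisely the collapse that makes both claims immediate, and your handling of the ESS part (vacuous stability condition forcing uniqueness of the maximizer) is the intended argument.
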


	   \begin{exa}
	   	For the matrix
	   	\begin{equation}\label{eq:telema}
	   	A=\left( \begin{array}{rrr}1 & 2 & 3\\ 4 & 5 & 6 \\7 & 8 &9 \end{array}\right)
	   	\end{equation}
	   	the pure strategy $p^{*}=(0,0,1)$ is the unique NE and ESS, because
	   	\begin{equation*}
	   	A=(0,3,6)\otimes\mathbf{1}+\mathbf{1}\otimes (1,2,3).
	   	\end{equation*} 
	   \end{exa}
	   
If $\lambda>0$ in (\ref{eq:e5}), then $A\in\Gamma^{<}$ and we know $A$ has an ESS. Clearly, if $u\in\langle\mathbf{1}\rangle$ then the barycenter $e$ is an interior Nash equilibrium and also ESS. Otherwise, a necessary and sufficient condition on $\lambda$ ensuring the existence of interior NE is given in
\begin{prop}\label{prop:hmpf}
	Let $A$ satisfy (\ref{eq:e5}) with $u\notin\langle\mathbf{1}\rangle$. Denote $|u|:=u_{1}+\dots+u_{n}$ and for $x\in\mathbb{R}$ set $x^{-}:=-\min(x,0)$.
	Then $A$ has an interior Nash equilibrium if and only if $\lambda>|u|+n\max_{i}u^{-}_{i}$.
\end{prop}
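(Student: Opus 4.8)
The plan is to exploit the very special affine structure of the map $p\mapsto Ap$ that statement (iv) provides. First I would compute, for $A=-\lambda\id+u\otimes\mathbf{1}+\mathbf{1}\otimes v$ and $p\in\bar{\Delta}$,
\[
(Ap)_{i}=-\lambda p_{i}+u_{i}+v\cdot p ,
\]
noting that the last term $v\cdot p$ is a scalar independent of $i$. Hence the payoff vector $Ap$ differs from $-\lambda p+u$ only by an $i$-independent shift, and in particular $\argmax_{i}(Ap)_{i}=\argmax_{i}(u_{i}-\lambda p_{i})$. This is the only computation needed and everything else is bookkeeping.

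Next I would recall that $p^{*}$ is a Nash equilibrium if and only if $\supp(p^{*})\subseteq\argmax_{i}(Ap^{*})_{i}$, so that $p^{*}\in\Delta$ is an \emph{interior} NE precisely when $(Ap^{*})_{i}$ is independent of $i$. By the formula above this is the linear system $-\lambda p^{*}_{i}+u_{i}=-\lambda p^{*}_{j}+u_{j}$ for all $i,j$, which together with $\sum_{i}p^{*}_{i}=1$ and $\lambda>0$ has the unique solution
\[
p^{*}_{i}=\frac{\lambda+nu_{i}-|u|}{n\lambda},\qquad i=1,\dots,n .
\]
The point requiring a line of care is the converse: whenever this particular $p^{*}$ happens to lie in $\Delta$, it is automatically a Nash equilibrium, because if $(Ap^{*})_{i}\equiv c$ then $p\cdot Ap^{*}=c=p^{*}\cdot Ap^{*}$ for every $p\in\bar{\Delta}$, so the equilibrium inequality holds (with equality). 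Thus the existence of an interior NE is equivalent to the explicit vector $p^{*}$ above having all coordinates strictly positive.

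Finally, $p^{*}_{i}>0$ for all $i$ is equivalent to $\lambda+nu_{i}-|u|>0$ for all $i$, i.e. to $\lambda>|u|-n\min_{i}u_{i}$. Since replacing $u$ by $u+t\mathbf{1}$ and $v$ by $v-t\mathbf{1}$ leaves $A$ (hence the existence of an interior NE) unchanged, I may pick the representative with $\min_{i}u_{i}\le 0$, for which $-n\min_{i}u_{i}=n\max_{i}u_{i}^{-}$, yielding the stated threshold $\lambda>|u|+n\max_{i}u_{i}^{-}$; the hypothesis $u\notin\langle\mathbf{1}\rangle$ merely excludes the degenerate case in which $p^{*}$ is the barycenter for every $\lambda$. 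The only genuinely delicate steps are the ``automatically a NE'' observation and the reparametrisation bookkeeping needed to bring the threshold into exactly the form of the statement; the rest is a one-line computation.
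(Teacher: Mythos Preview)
Your direct computation is cleaner than the paper's route. The paper first treats the case $u\in\mathbb{R}^{n}_{\ge0}$ and then reduces the general $u$ to it by the column shift $\tilde u_{i}=u_{i}+\max_{j}u_{j}^{-}$; you instead solve the affine system $-\lambda p^{*}_{i}+u_{i}=\text{const}$ in one stroke and read off the positivity condition. Up to your final paragraph the argument is correct and in fact sharper than the paper's: you obtain the threshold $\lambda>|u|-n\min_{i}u_{i}$, which is the genuine necessary and sufficient condition.

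The reparametrisation step, however, does not do what you claim. Under $u\mapsto u+t\mathbf{1}$ one has $|u|\mapsto |u|+nt$ and $\min_{i}u_{i}\mapsto\min_{i}u_{i}+t$, so the quantity $|u|-n\min_{i}u_{i}$ is \emph{invariant}; you cannot alter it by choosing a different representative. The expression in the statement, $|u|+n\max_{i}u_{i}^{-}$, is \emph{not} invariant under the same shift, and the two coincide precisely when $\min_{i}u_{i}\le 0$. For $u$ with all entries strictly positive they differ: take $n=2$, $u=(1,3)$, where your formula gives $p^{*}=\bigl(\tfrac{\lambda-2}{2\lambda},\tfrac{\lambda+2}{2\lambda}\bigr)$, an interior Nash equilibrium as soon as $\lambda>2$, whereas $|u|+n\max_{i}u_{i}^{-}=4$. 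So your argument actually exposes an inaccuracy in the proposition as stated (and in the paper's converse step, where the passage from $(1-np^{*}_{i^{*}})\lambda=|u|-nu_{i^{*}}$ to $\lambda\ge |u|/(1-np^{*}_{i^{*}})$ goes the wrong way unless $u_{i^{*}}=0$). The correct invariant threshold is the one you found, $\lambda>|u|-n\min_{i}u_{i}$; it agrees with the paper's $|u|+n\max_{i}u_{i}^{-}$ only for representatives with $\min_{i}u_{i}\le 0$.
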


\begin{proof}
	Consider first the case when $u\in\mathbb{R}^{n}_{\ge 0}$. Assume $\lambda>|u|$. Then 
	\begin{equation*}
	\delta:=\frac{1}{n}\left(1-\frac{|u|}{\lambda}\right)>0.
	\end{equation*}
	Define
	\begin{equation*}
	p_{i}^{*}:=\frac{u_{i}}{\lambda}+\delta.
	\end{equation*}
	Then $p^{*}\in\Delta$ and 
	\begin{equation}\label{eq:NE}
	-\lambda p_{1}^{*}+u_{1}=\dots=-\lambda p_{n}^{*}+u_{n},
	\end{equation}
	whence $p^{*}$ is an interior NE.
	For the converse direction, suppose $p^{*}$ is an interior NE. Then $p^{*}$ obeys (\ref{eq:NE}). Let $i^{*}$ be such that $u_{i^{*}}=\min_{i}u_{i}$. Using (\ref{eq:NE}) it follows
	\begin{equation*}
	n(-\lambda p^{*}_{i^{*}}+u_{i^{*}})=-\lambda+|u|
	\end{equation*}
	and therefore 
	\begin{equation*}
	(1-np^{*}_{i^{*}})\lambda=|u|-nu_{i^{*}}>0.
	\end{equation*}
	Hence, $(1-np^{*}_{i^{*}})\in(0,1)$ and
	\begin{equation*}
	\lambda\ge\frac{|u|}{1-np^{*}_{i^{*}}}>|u|.
	\end{equation*}
	Now we drop the sign condition on $u$ and consider some general $u\in\mathbb{R}^{n}$. Recall, that Nash equilibria for a payoff matrix $A$ are invariant under the addition of a constant to any of the columns of $A$. Therefore,
	\begin{equation}
	N(A)=N(-\lambda\operatorname{id}+u\otimes\mathbf{1})=N(-\lambda\operatorname{id}+\tilde{u}\otimes\mathbf{1}),
	\end{equation}
	where $\tilde{u}\in\mathbb{R}^{n}_{\ge0}$ is obtained from $u$ by 
	\begin{equation}
	\tilde{u}_{i}:=u_{i}+\max_{i} u^{-}_{i}
	\end{equation}
	and we can apply the result of the previous setting.
\end{proof}  
 
Figure 2 depicts phase portraits of replicator dynamics corresponding to $A$ as in (\ref{eq:telema}) for (a), $A-5\operatorname{id}$ in (b), and $A-10\operatorname{id}$ in (c). The corresponding ESS are $(0,0,1)$ in (a) and computed numerically using \citep{dynamo}, $(0,\frac{1}{5},\frac{4}{5})$ in (b) and $(\frac{1}{30},\frac{1}{3},\frac{19}{30})$ in (c).\par
 
\begin{figure}% 
	\centering
	\subfloat[][]{\includegraphics[width=0.3\linewidth]{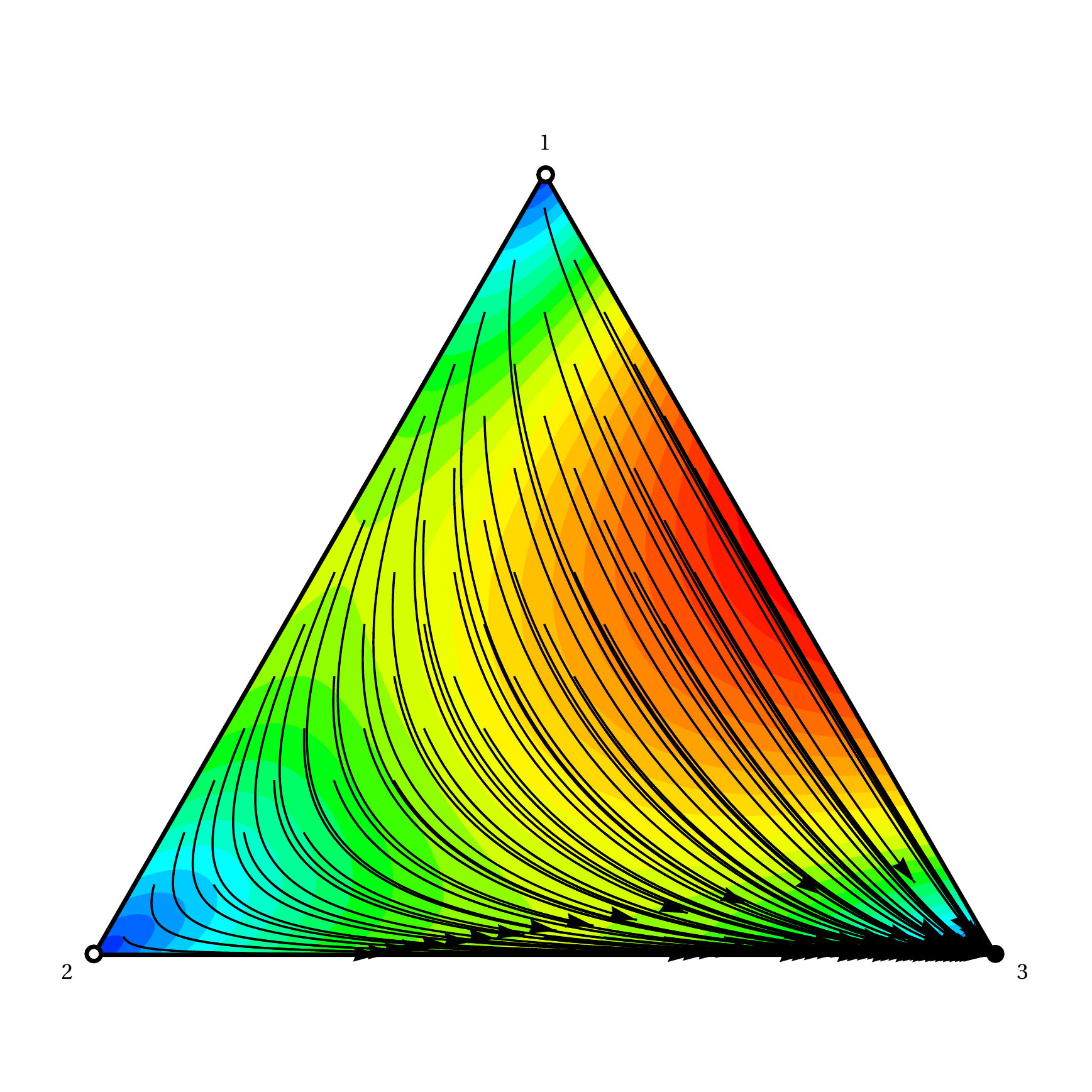}}%
	\quad
	\subfloat[][]{\includegraphics[width=0.3\linewidth]{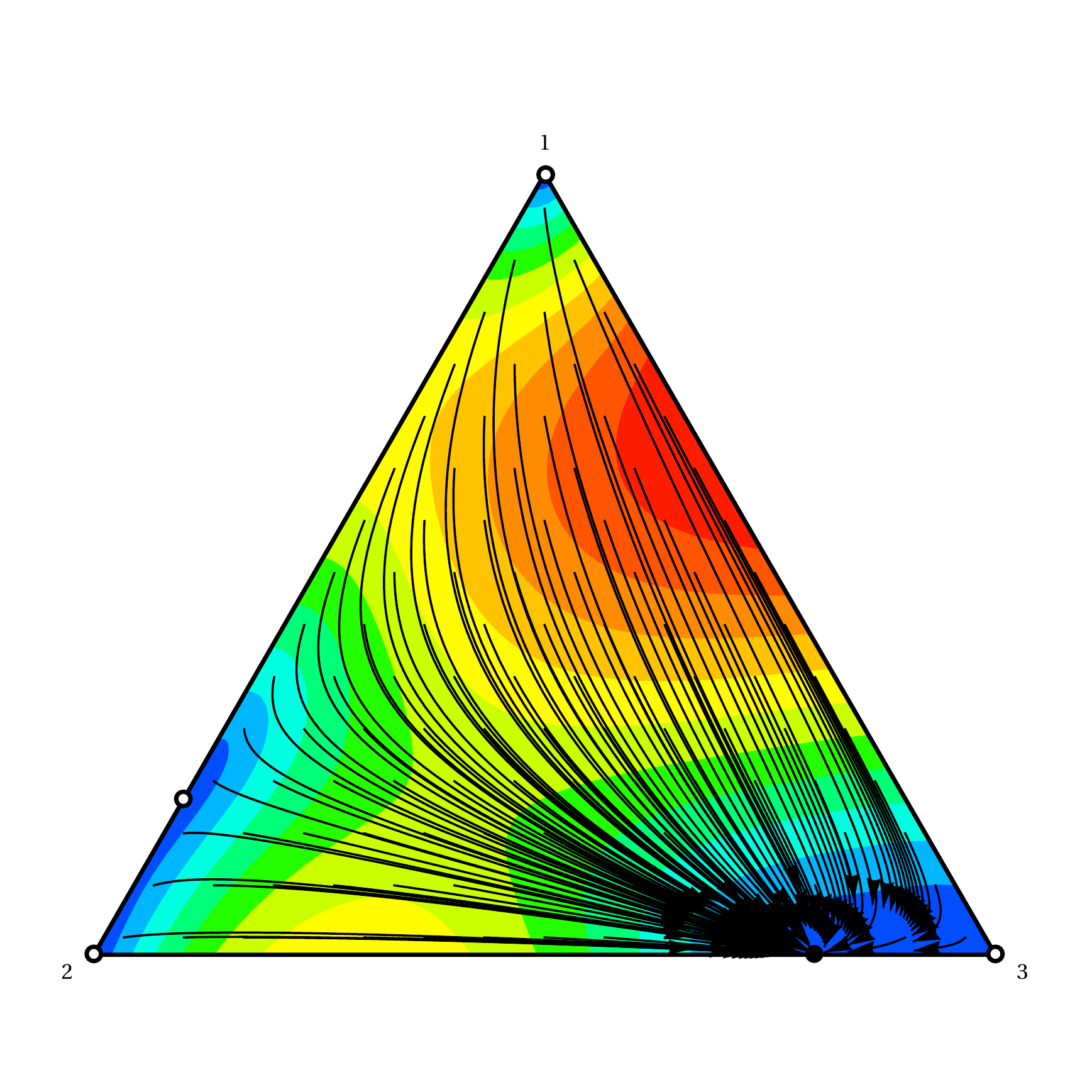}}%
	\quad
	\subfloat[][]{\includegraphics[width=0.3\linewidth]{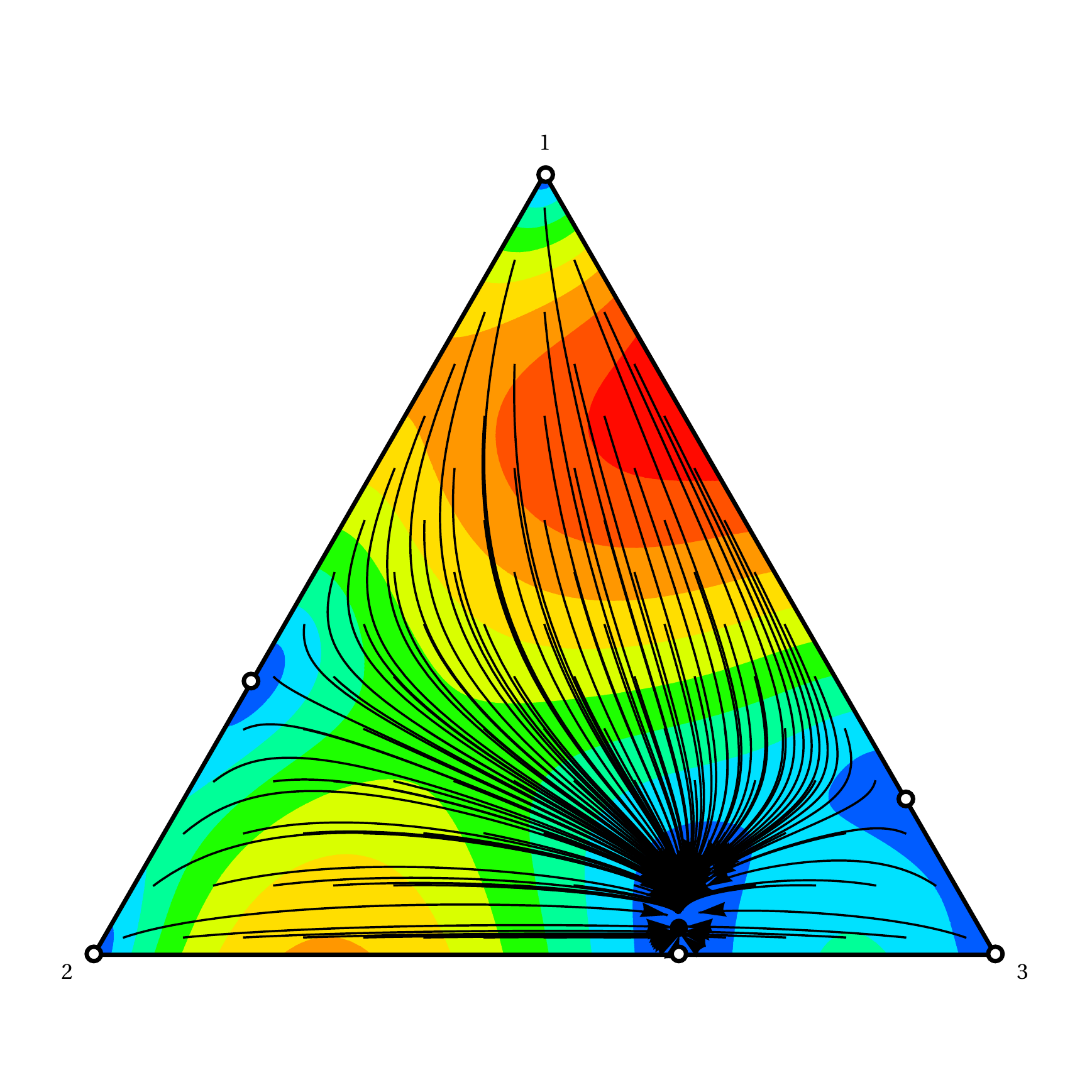}}
	\caption{Phase portraits of replicator dynamics for increasing choice of $\lambda$.}%
\end{figure}

Finally, observe that for such matrices one can update the estimate in (\ref{eq:e1}) to an exponential contraction:	   
	   
	 \begin{cor}\label{cor:ecnsd}
		Let $A\in\mathbb{R}^{n\times n}$ be a payoff matrix with 
		\begin{equation*}
		A=-\lambda\id+u\otimes\mathbf{1}+\mathbf{1}\otimes v
		\end{equation*}
		for some vectors $u,v\in\mathbb{R}^{n}$ and $\lambda>0$. Then, if $(p(t))$ and $(q(t))$ are solutions to the deterministic replicator equation (\ref{eq:requad}), the following estimates are valid
		
		\begin{equation}\label{eq:c2}
		\|p(t)\ominus q(t)\|^{2}_{A}+2\lambda\int_{0}^{t}\|p(s)-q(s)\|^{2}ds\le\|p\ominus q\|^{2}_{A},\quad t\ge 0
		\end{equation}
		and
		\begin{equation*}
		\|p(t)-q(t)\|\le e^{-\lambda t}\|p\ominus q\|.
		\end{equation*}
		
		However, there does not exist any matrix $A\in\mathbb{R}^{n\times n}$ for which one can find some $\lambda>0$ such that
		\begin{equation}\label{eq:con}
		\|p(t)\ominus q(t)\|_{A}\le e^{-\lambda t}\|p\ominus q\|_{A}.
		\end{equation}
	\end{cor}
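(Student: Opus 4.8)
The plan is to read the two displayed estimates as a quantitative sharpening of the implication (iv)$\Rightarrow$(i) of Theorem~\ref{thm:ecnsd}, and to dispose of the non-existence claim by an asymptotic (scaling) argument. For the first part I would start from the identity $\tfrac{d}{dt}\clr(p(t))=\operatorname{pr}_H\!\big(Ap(t)\big)$, valid for any solution $(p(t))$ of the replicator equation (\ref{eq:requad}) (this is the computation already used in the proof of Theorem~\ref{thm:ecnsd}, via $\tfrac{d}{dt}\clr_i p=(Ap)_i-\tfrac1n\sum_j(Ap)_j$). If $A=-\lambda\id+u\otimes\mathbf{1}+\mathbf{1}\otimes v$ as in (\ref{eq:e5}) and $(q(t))$ is a second solution, then $p(t)-q(t)\in H$, so the rank-one terms of $A$ drop out under $\operatorname{pr}_H$ and one gets $\tfrac{d}{dt}\big(\clr(p(t))-\clr(q(t))\big)=-\lambda\big(p(t)-q(t)\big)$. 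Since $\|p\ominus q\|_A^2=\|\clr(p)-\clr(q)\|^2$, this yields $\tfrac{d}{dt}\|p(t)\ominus q(t)\|_A^2=-2\lambda\big(p(t)-q(t),\,\clr(p(t))-\clr(q(t))\big)$. Writing $x=\clr(p(t))$, $y=\clr(q(t))$ and invoking the co-coercivity of softmax \citep[c.f.][]{gao2017properties}, $\big(\sfm(x)-\sfm(y),x-y\big)\ge\|\sfm(x)-\sfm(y)\|^2$, the right-hand side is bounded by $-2\lambda\|p(t)-q(t)\|^2$; integrating in $t$ gives exactly (\ref{eq:c2}).

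For the exponential decay of the Euclidean distance I would combine (\ref{eq:c2}) with the elementary bound $\|p(t)-q(t)\|\le\|p(t)\ominus q(t)\|_A$ from (\ref{eq:normequi}) to obtain $\phi(t)+2\lambda\int_0^t\phi(s)\,ds\le\|p\ominus q\|_A^2$ for $\phi:=\|p(\cdot)-q(\cdot)\|^2$, and then apply the integral form of Gronwall's lemma (set $\Phi=\int_0^t\phi$, so $\Phi'+2\lambda\Phi\le\|p\ominus q\|_A^2$, multiply by $e^{2\lambda t}$) to conclude $\phi(t)\le e^{-2\lambda t}\|p\ominus q\|_A^2$, i.e. $\|p(t)-q(t)\|\le e^{-\lambda t}\|p\ominus q\|_A$.

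For the non-existence claim I would argue by contradiction: suppose (\ref{eq:con}) holds for some $\lambda>0$. For any $p,q\in\Delta$ the map $t\mapsto\|p(t)\ominus q(t)\|_A^2$ is smooth near $t=0$ (both the replicator flow and $(p,q)\mapsto\|p\ominus q\|_A^2$ are smooth) and is dominated by $e^{-2\lambda t}\|p\ominus q\|_A^2$ with equality at $t=0$; differentiating at $t=0$ therefore forces $\tfrac{d}{dt}\big|_{t=0}\|p(t)\ominus q(t)\|_A^2\le-2\lambda\|p\ominus q\|_A^2$. Using again $\tfrac{d}{dt}\clr(p(t))=\operatorname{pr}_H(Ap(t))$, the left-hand side equals $2\big(A(p-q),\clr(p)-\clr(q)\big)$, so one would need $\big(A(p-q),\clr(p)-\clr(q)\big)\le-\lambda\|\clr(p)-\clr(q)\|^2$ for \emph{all} $p,q\in\Delta$. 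I would then test this with $q=e$ (so $\clr(e)=0$) and $p=p_\tau:=\sfm(\tau h)$ for a fixed $h\in H\setminus\{0\}$ and $\tau>0$; since $\clr(\sfm(\tau h))=\operatorname{pr}_H(\tau h)=\tau h$, the inequality becomes $\tau\big(A(p_\tau-e),h\big)\le-\lambda\tau^2\|h\|^2$. Because $\sfm$ takes values in the bounded simplex, the left-hand side is $O(\tau)$ while the right-hand side is $-\lambda\|h\|^2\tau^2$, so letting $\tau\to\infty$ gives a contradiction.

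The computations are all elementary chain-rule manipulations, so the only delicate points are (a) justifying differentiation of the contraction inequality at the boundary point $t=0$ (handled by smoothness of the polynomial flow and of $\|\cdot\ominus\cdot\|_A^2$) and (b) the choice of the probe family $p_\tau=\sfm(\tau h)$: the argument works precisely because softmax \emph{saturates} --- $\sfm(\tau h)$ converges to a vertex (or a face-barycenter) of $\Delta$ as $\tau\to\infty$ --- so that $d_A(p_\tau,e)$ grows linearly in $\tau$ while $\|A(p_\tau-e)\|$ stays bounded. I expect step (b) to be the main obstacle, in the sense that it is the only place requiring a genuine idea rather than a routine calculation; once it is isolated, the rest follows mechanically from Theorem~\ref{thm:ecnsd}, (\ref{eq:normequi}) and the co-coercivity of $\sfm$.
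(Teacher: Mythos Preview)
Your proposal is correct and follows essentially the same route as the paper: the derivative identity $\tfrac{d}{dt}\|p(t)\ominus q(t)\|_A^2=-2\lambda(\clr p-\clr q,\,p-q)$ combined with co-coercivity of $\sfm$ gives (\ref{eq:c2}), the bound $\|p-q\|\le\|p\ominus q\|_A$ plus Gronwall gives the exponential decay, and the non-existence is obtained by differentiating (\ref{eq:con}) at $t=0$ and exploiting the saturation of $\sfm$ under scaling. The only cosmetic difference is that for the last step the paper rescales \emph{both} arguments $(x,y)\mapsto(\beta x,\beta y)$ and sends $\beta\to\infty$, whereas you fix $q=e$ and scale only $p=\sfm(\tau h)$; both versions produce the same $O(\tau)$ versus $\tau^2$ mismatch.
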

	
	\begin{proof}
		As in the last part of the proof of Theorem \ref{thm:ecnsd}, we have
		\begin{equation*}
		\frac{d}{dt}\|p(t)\ominus q(t)\|^{2}_{A}=-2\lambda(\clr(p(t))-\clr(q(t)),p(t)-q(t))\le-2\lambda\|p(t)-q(t)\|^{2},
		\end{equation*}
		which immediately entails (\ref{eq:c2}). On the other hand, since $\|p-q\|\le\|p\ominus q\|_{A}$ we also find
		\begin{equation*}
		\|p(t)-q(t)\|^{2}\le\|p\ominus q\|^{2}-2\lambda\int_{0}^{t}\|p(s)-q(s)\|^{2}ds,
		\end{equation*}
		from which we infer (\ref{eq:c2}) by Gronwall's inequality.\par 
		As for the second part of the claim let $A$ be an arbitrary $n\times n$ payoff matrix and, aiming for a contraction, assume there is $\lambda>0$ such that (\ref{eq:con}) holds true. Then, differentiation at $t=0$ yields that $\ominus\theta$ ought to be $\lambda$-strongly monotone, i.e. for all $p,q\in\Delta$
		\begin{equation*}
		\langle\theta(p)\ominus\theta(q),p\ominus q\rangle_{A}\le-\lambda \|p\ominus q\|^{2},
		\end{equation*}
		or equivalently
		\begin{equation*}
		(A\sfm(x)-A\sfm(y),x-y)\le-\lambda\|x-y\|^{2},\quad x,y\in H.
		\end{equation*}
		Now rescale the previous inequality by replacing $x$ and $y$ by $\beta x$ and $\beta y$ for some $\beta\in\mathbb{R}$. Then,
		\begin{equation}\label{eq:x}
		\frac{1}{\beta}(A\sfm(\beta x)-A\sfm(\beta y),x-y)\le-\lambda\|x-y\|^{2}
		\end{equation}
		Now choose, $x\ne y$ both such that they have one distinct maximal entry. Then \citep[c.f.][]{gao2017properties},
		\begin{equation*}
		\lim_{\beta\to\infty}\sfm(\beta x)=\argmax x.
		\end{equation*}
		Therefore taking $\beta$ to infinity in (\ref{eq:x}) yields
		\begin{equation*}
		0\le-\lambda\|x-y\|^{2}
		\end{equation*}
		contradicting our assumption on the sign of $\lambda$.
	\end{proof}

Whereas the previous findings might be of independent interest for evolutionary game theory, the main reason for treating in depth the deterministic dynamic is that these results have an immediate counterpart in the stochastic setting. \par

\begin{thm}[Wasserstein contractions for stochastic replicator dynamics]\label{thm:Wacontract}
Let $A\in\mathbb{R}^{n\times n}$ be a payoff matrix and $Y$ and $Y'$ be a solutions to the stochastic replicator equation (\ref{eq:ka}) with $Y_{0}\sim\mu_{0}$ and $Y_{0}'\sim\mu_{0}'$. Denote for every $t\ge0$ by $\mu_{t}$ and $\mu_{t}'$ the law of $Y_{t}$ and $Y_{t}'$, respectively. Then,
\begin{equation}\label{eq:wacon21}
W_{A}(\mu_{t},\mu_{t}')\le W_{A}(\mu_{0},\mu_{0}'),\quad t\ge 0.
\end{equation}
if and only if $A=-\lambda\id+u\otimes\mathbf{1}+\mathbf{1}\otimes v$ for some vectors $u,v\in\mathbb{R}^{n}$ and $\lambda\ge 0$. If, $\lambda>0$ then additionally
\begin{equation}\label{eq:wacon21'}
W_{\Delta}(\mu_{t},\mu_{t}')\le e^{-\lambda t}W_{A}(\mu_{0},\mu_{0}'),\quad t\ge 0.
\end{equation}

However, there is no matrix $A$ such that for some $\lambda>0$
\begin{equation}\label{eq:wacon22}
W_{A}(\mu_{t},\mu_{t}')\le e^{-\lambda t} W_{A}(\mu_{0},\mu_{0}'),\quad t\ge 0.
\end{equation}

\end{thm}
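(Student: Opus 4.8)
The plan is to lift the deterministic statements of Theorem~\ref{thm:ecnsd} and Corollary~\ref{cor:ecnsd} to the level of laws by a \emph{synchronous coupling}. Given any coupling $(Y_0,Y_0')$ of $(\mu_0,\mu_0')$ I would run two solutions $Y,Y'$ of (\ref{eq:ka}) driven by one and the same Brownian motion; since the $\ilr$-drift $\hat\theta$ is bounded and Lipschitz (proof of Proposition~\ref{prop:p}) pathwise uniqueness holds, so $(Y_t,Y_t')$ is a coupling of $(\mu_t,\mu_t')$ for every $t$. By Theorem~\ref{lem:lem} both processes solve $dY=\theta(Y)\,dt\oplus dX$ driven by the \emph{same} Aitchison noise $X$, so the noise cancels in the Aitchison difference $D_t:=Y_t\ominus Y_t'$:
\begin{equation*}
\clr(D_t)=\clr(D_0)+\int_0^t\bigl(\clr(\theta(Y_s))-\clr(\theta(Y_s'))\bigr)\,ds .
\end{equation*}
Hence $t\mapsto\|D_t\|_A^2$ is pathwise absolutely continuous with $\tfrac{d}{dt}\|D_t\|_A^2=2\langle\theta(Y_t)\ominus\theta(Y_t'),D_t\rangle_A$, and all upper bounds below follow by integrating this identity along a $W_A(\mu_0,\mu_0')$-optimal initial coupling and taking expectations, using that $(Y_t,Y_t')$ is a (not necessarily optimal) coupling of $(\mu_t,\mu_t')$.

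\emph{Sufficiency.} When $A=-\lambda\id+u\otimes\mathbf 1+\mathbf 1\otimes v$ with $\lambda\ge 0$, the equivalence (ii)$\Leftrightarrow$(iv) of Theorem~\ref{thm:ecnsd} makes the above derivative nonpositive, which is (\ref{eq:wacon21}). For the sharpened estimate (\ref{eq:wacon21'}) I would rerun the computation from the proof of Corollary~\ref{cor:ecnsd}: since $\clr(\theta(p))=\operatorname{pr}_H(Ap)$ and $\mathbf 1\cdot(Y_t-Y_t')=0$ one gets $\tfrac{d}{dt}\clr(D_t)=-\lambda(Y_t-Y_t')$, whence, by the co-coercivity of $\sfm$ exactly as in Proposition~\ref{pro:dircon},
\begin{equation*}
\tfrac{d}{dt}\|D_t\|_A^2=-2\lambda\bigl(\clr(Y_t)-\clr(Y_t'),\,Y_t-Y_t'\bigr)\le-2\lambda\|Y_t-Y_t'\|^2 .
\end{equation*}
Inserting $\|Y_t-Y_t'\|\le\|D_t\|_A$ from (\ref{eq:normequi}) and applying Gronwall's lemma gives $\|Y_t-Y_t'\|^2\le e^{-2\lambda t}\|D_0\|_A^2$ pathwise, hence (\ref{eq:wacon21'}) after passing to the optimal initial coupling.

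\emph{Necessity and impossibility.} This is the delicate part, because a synchronous coupling only delivers \emph{upper} bounds on $W_A$; I need a matching lower bound that survives averaging. For deterministic initial data $\mu_0=\delta_p$, $\mu_0'=\delta_q$, passing to $\hat Y_t:=\ilr(Y_t)$, $\hat Y_t':=\ilr(Y_t')$ (Lemma~\ref{lem:coc}), Jensen's inequality gives
\begin{equation*}
W_A^2(\mu_t,\mu_t')=W_{\mathbb R^{n-1}}^2(Law(\hat Y_t),Law(\hat Y_t'))\ge\|\mathbb E\hat Y_t-\mathbb E\hat Y_t'\|^2 ,
\end{equation*}
and, $\hat\theta$ being bounded (by $\|\Psi\|\,\|A\|$) and continuous, $t\mapsto\mathbb E\hat Y_t$ is $C^1$ with $\tfrac{d}{dt}\mathbb E\hat Y_t=\mathbb E\hat\theta(\hat Y_t)$. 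Thus $t\mapsto\|\mathbb E\hat Y_t-\mathbb E\hat Y_t'\|$ is differentiable at $0$ with derivative $2\langle p\ominus q,\theta(p)\ominus\theta(q)\rangle_A$ and satisfies $\|\mathbb E\hat Y_t-\mathbb E\hat Y_t'\|\ge d_A(p,q)-2\|\Psi\|\,\|A\|\,t$. If $A$ is not of the form (\ref{eq:e5}), Theorem~\ref{thm:ecnsd} produces $p,q$ with $\langle p\ominus q,\theta(p)\ominus\theta(q)\rangle_A>0$, so $W_A^2(\mu_t,\mu_t')>W_A^2(\mu_0,\mu_0')$ for small $t>0$, contradicting (\ref{eq:wacon21}); and if (\ref{eq:wacon22}) held for some $\lambda>0$, the lower bound forces $d_A(p,q)-2\|\Psi\|\,\|A\|\,t\le e^{-\lambda t}d_A(p,q)$ for all $p,q\in\Delta$ and $t\ge 0$, which fails upon fixing $t=1$ and letting $d_A(p,q)\to\infty$, possible since $d_A$ is unbounded on $\Delta$.

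I expect this last step to be the main obstacle: the synchronous coupling is intrinsically one-sided, so the entire content of the converse directions is the construction of a lower bound for $W_A(\mu_t,\mu_t')$. The barycenter/Jensen estimate, combined with the global boundedness of the $\ilr$-drift, is exactly the device that turns the deterministic non-contractivity of Theorem~\ref{thm:ecnsd} (and the unboundedness of the Aitchison metric on $\Delta$) into the stochastic statements.
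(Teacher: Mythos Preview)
Your sufficiency direction and the proof of (\ref{eq:wacon21'}) are exactly the paper's: synchronous coupling via the Aitchison representation $dY=\theta(Y)\,dt\oplus dX$, monotonicity of $\ominus\theta$ from Theorem~\ref{thm:ecnsd}, co-coercivity of $\sfm$, and Gronwall.

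The converse and impossibility parts are where you diverge from the paper. The paper does not construct any lower bound for $W_A$ at all; instead it transports the problem to $\mathbb{R}^{n-1}$ via Lemma~\ref{lem:coc} and then invokes the known equivalence (e.g.\ \cite{von2005transport,Natile2011,bolley2012convergence}) that for an additive-noise diffusion $d\hat Y=\hat\theta(\hat Y)\,dt+\sqrt{2}\,dB$ the semigroup is $W_{\mathbb{R}^{n-1}}$-nonexpansive (resp.\ exponentially contractive with rate $\lambda$) if and only if $(\hat\theta(x)-\hat\theta(y),x-y)\le0$ (resp.\ $\le-\lambda\|x-y\|^2$). Both conclusions then drop out of Theorem~\ref{thm:ecnsd} and Corollary~\ref{cor:ecnsd}.

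Your route is genuinely different and more self-contained: for Dirac initial data you bound $W_A$ from below by the distance of barycenters in $\ilr$-coordinates and exploit that $\hat\theta$ is globally bounded, so $t\mapsto\mathbb E\hat Y_t$ moves at speed at most $\|\Psi\|\,\|A\|$. This gives necessity by differentiating at $t=0$ (small typo: you differentiate $\|\cdot\|^2$, not $\|\cdot\|$), and impossibility by the nice observation that a uniform exponential contraction would force $(1-e^{-\lambda})\,d_A(p,q)\le 2\|\Psi\|\,\|A\|$, contradicting unboundedness of $d_A$. The paper's citation is shorter and immediately yields the full equivalence for all initial laws; your barycenter argument avoids the black box entirely and makes transparent why the bounded drift in $\ilr$-coordinates is the obstruction to exponential $W_A$-contraction.
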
 

\begin{proof}
	Using Theorem \ref{lem:lem}, we can represent a solution to the stochastic replicator equation $Y$ by the SDE on the Aitchison simplex:
	\begin{equation}\label{eq:ai}
	dY_{t}=\theta(Y_{t})dt\oplus dX_{t}.
	\end{equation}
	In particular, if $Y$ and $Y'$ are driven by the same Brownian motion $B$, then there is an Aitchison diffusion $X$, driving both $Y$ and $Y'$ in their Aitchison representation (\ref{eq:ai}). Now if $A$ satisfies (\ref{eq:e5}), then by monotonicity of $\ominus\theta$ (c.f. \ref{eq:e2}) if follows
	\begin{equation}\label{eq:whatever}
	\frac{d}{dt}\|Y_{t}\ominus Y_{t}'\|^{2}_{A}=2\langle\theta(Y_{t})\ominus\theta(Y_{t}'),Y_{t}\ominus Y_{t}'\rangle_{A}\le 0
	\end{equation}
	and hence, $\mathbb{E}\|Y_{t}\ominus Y_{t}'\|^{2}_{A}\le\mathbb{E}\|Y_{0}\ominus Y_{0}'\|^{2}_{A}$, which yields (\ref{eq:wacon21}) after optimizing over all coupling on both sides of the inequality.\par 
	For the other direction, take $(\mu_{t})$ and $(\mu_{t}')$ with $\mu_{0}=\delta_{p}$ and $\mu'_{0}=\delta_{q}$, where $p,q\in\Delta$. By assumption, we have
	\begin{equation}\label{eq:pel}
	W_{\mathbb{R}^{n-1}}(\ilr^{-1}_{\#}\mu_{t},\ilr^{-1}_{\#}\mu_{t}')\le W_{\mathbb{R}^{n-1}}(\ilr^{-1}_{\#}\delta_{p},\ilr^{-1}_{\#}\delta_{q})
	\end{equation}
	Since, the generator $\hat{L}$ of the process $\hat{Y}=\ilr(Y_{t})$ is given by
	\begin{equation*}
	\hat{L}f(x)=\frac{1}{2}\Delta f(x)+\nabla f(x)\cdot\hat{\theta}(x)
	\end{equation*}
	it is well-known \citep[c.f.][]{Natile2011,bolley2012convergence,von2005transport}
that (\ref{eq:pel}) implies (indeed is equivalent to)
	\begin{equation*}\label{eq:jjj}
	(\hat{\theta}(x)-\hat{\theta}(y),x-y)\le0,
	\end{equation*}
	for all $x,y\in\mathbb{R}^{n-1}$. This in turn is equivalent to $\ominus\theta$ being monotone and thus, by Theorem \ref{thm:ecnsd} $A$ obeys (\ref{eq:e5}).\par 
	If we know that $\lambda>0$, we can improve (\ref{eq:whatever}) to
		\begin{equation}\label{eq:whatever}
	\frac{d}{dt}\|Y_{t}\ominus Y_{t}'\|^{2}_{A}\le-2\lambda\|Y_{t}-Y_{t}'\|^{2}
	\end{equation}
	which yields (\ref{eq:wacon21'}) by the same arguments we used in the proof of Corollary \ref{cor:ecnsd}. Finally, again by e.g. \citep[][]{Natile2011} the exponential contraction in (\ref{eq:wacon22}) is equivalent to
	\begin{equation*}\label{eq:jjj}
	(\hat{\theta}(x)-\hat{\theta}(y),x-y)\le-\lambda\|x-y\|^{2}
	\end{equation*}
	and thus to $\ominus\theta$ being $\lambda$-strongly monotone, which is impossible, as we saw in Corollary \ref{cor:ecnsd}.
\end{proof}
       
Finally, let us relate the previous theorem to the results of Subsection 5.1. If $A$ obeys (\ref{eq:e5}) with $\lambda=0$, then $\Lambda\equiv 0$, whence by Corollary \ref{cor:Aiinv} the Aitchison measure is invariant for the corresponding replicator diffusion $Y$. Moreover, in dimensions $n\ge 4$, $Y$ must be transient. Indeed, transience holds also for $n=2,3$. To see this, note by Proposition \ref{prop:NA}, $A$ can have interior NE only when $u\in\langle \mathbf{1}\rangle$, in which case the stochastic replicator dynamic degenerates to an undrifted Aitchison diffusion and is thus transient. Otherwise, if $A$ has no interior NE, transience follows from \citep[Cor. 4.16]{hofbauer2009time}.\par
   If $\lambda>0$, then $A$ satisfies in particular condition $(i)$ of Corollary \ref{cor: diri} (with $\lambda=|\alpha|$). Moreover $A$ has an interior NE iff either $u\in\langle\mathbf{1}\rangle$ in which case $p^{*}=e$ or $\lambda>|u|+n\max_{i}u^{-}_{i}$ by Proposition \ref{prop:hmpf}. If $p^{*}\in\Delta$ is a NE for $A$, it follows that the Dirichlet distribution with parameter $\lambda p^{*}$ is invariant for $Y$. In this case we observe that the stochastic replicator dynamic obeys the same contraction behavior as the Langevin dynamic in Proposition \ref{pro:dircon}.  \par 
       \vspace{1.1cm}

   \noindent
   A\footnotesize{CKNOWLEDGEMNTS}. \normalsize{The author is much obliged to Denis Serre, who provided the proof for the case $n\ge 4$ in $(iii)\implies(iv)$ of Theorem 21.} Much appreciated are also the critical remarks of Max von Renesse that helped to substantially improve the paper.

\bibliographystyle{plain}  
 \renewcommand\refname{\begin{center}
 		\normalfont{R\footnotesize{EFERENCES}}
 \end{center}}
\bibliography{exist}

\end{document}